\soulregister{\cite}{7}
\soulregister{\ref}{7}
\soulregister{\eqref}{7}
\definecolor{lightyellow}{Hsb}{60,.5,1}
  \definecolor{link1}{Hsb}{240,1,.75}
  \definecolor{link2}{Hsb}{240,1,.5}
\newtheorem{theorem}{Theorem}[section]
\newtheorem*{theorem*}{Theorem}
\newtheorem{lemma}[theorem]{Lemma}
\newtheorem{proposition}[theorem]{Proposition}
\newtheorem{definition}[theorem]{Definition}
\newtheorem{remark}[theorem]{Remark}
\newtheorem*{remark*}{Remark}
\definecolor{darkred}{rgb}{0.7,0.1,0.1}
\newcommand{\gautam}[1]{\comment[blue]{GI: #1}}
\definecolor{darkgreen}{rgb}{0,.5,0}
\newcommand{\leonid}[1]{\comment[darkgreen]{LK: #1}}
\newcommand{\al}[1]{\comment[orange]{AN: #1}}
\definecolor{teal}{rgb}{0,.5,.5}
\definecolor{red}{rgb}{1,0,0}
\definecolor{green}{rgb}{0,1,0}
\definecolor{blue}{rgb}{0,0,1}
\definecolor{cyan}{rgb}{0,1,1}
\newcommand{\N}{\mathbb{N}}
\newcommand{\R}{\mathbb{R}}
\newcommand{\Z}{\mathbb{Z}}
\def\eps{{\varepsilon}}
\def\cG{\mathcal{G}}
\def\cV{\mathcal{V}}
\def\cL{\mathcal{L}}
\def\cO{\mathcal{O}}
\def\CO{\mathcal{O}}
\def\CC{\mathcal{C}}
\def\CL{\mathcal{L}}
\newcommand{\torus}{\mathcal T}
\newcommand{\fd}{\mathcal T_{0}}
\def\CM{\mathcal{M}}
\def\CX{\mathcal{X}}
\def\CN{\mathcal{N}}
\def\e{\eps}
\def\P{\mathbf{P}}
\def\E{\mathbf{E}}
\def\Pe{\mathbf{P}_{\!\eps}}
\def\Ee{\mathbf{E}_\eps}
\def\Pb{\bar{\mathbf{P}}}
\def\Eb{\bar{\mathbf{E}}}
\DeclareBoldMathCommand{\one}{1}
\def\bbtau{\tau_{\mathrm{ex}}}
\DeclareMathOperator{\Rot}{Rot}
\numberwithin{equation}{section}
\newcommand{\defeq}{\stackrel{\text{\tiny def}}{=}}
\newcommand{\eff}{_\text{eff}}
\renewcommand{\epsilon}{\eps}
\newcommand{\lap}{\Delta}
\newcommand{\grad}{\nabla}
\newcommand{\curlz}{\grad_z \times}
\DeclarePairedDelimiter{\paren}{(}{)}
\DeclarePairedDelimiter{\brak}{[}{]}
\DeclarePairedDelimiter{\set}{\{}{\}}
\DeclarePairedDelimiter{\floor}{\lfloor}{\rfloor}
\DeclarePairedDelimiter{\abs}{\lvert}{\rvert}
\DeclarePairedDelimiter{\norm}{\lVert}{\rVert}
\DeclarePairedDelimiter{\qv}{\langle}{\rangle}
\DeclarePairedDelimiter{\av}{\langle}{\rangle}
\DeclareMathOperator{\var}{Var}
\DeclareMathOperator{\sign}{sign}
\def\TV{\mathrm{TV}}
\renewcommand{\ge}{\geqslant}
\renewcommand{\geq}{\ge}
\renewcommand{\le}{\leqslant}
\renewcommand{\leq}{\le}
\newcommand{\Gammaeps}{\Gamma_{\!\eps}}
\newcommand{\hge}{\widehat{\Gamma}_{\!\eps}}
\newcommand{\bge}{\overline{\Gamma}_{\!\eps}}
\newcommand{\Vep}{\mathcal V_{\!\epsilon}}
\newcommand{\Rde}{\mathcal R_\delta}
\def\DeltaZ{\Delta^{\!\eps}}
\newcommand{\edelta}{e_\delta}
\newcommand{\edeltab}{\bar  e_\delta}
\def\Lbar{{\bar \CL}}
\newcommand{\Teps}{T^\eps}
\newcommand{\bTeps}{\bar T^\eps}
\def\Hep{H_x^\eps}
\def\MHtext#1{\textcolor{darkred}{#1}}
\def\X{\mathscr X}
\def\CP{\mathcal{P}}
\def\CC{\mathcal{C}}
\def\CF{\mathcal{F}}
\def\id{\mathrm{id}}
\def\Gb{{\bar{\mathcal G}}}
\newcommand{\caputo}[1]{\mathcal D^{#1}_t}
\renewcommand{\downarrow}{\to}
\begin{document}

\title{A fractional kinetic process describing the intermediate time behaviour of cellular flows}
\author{%
  Martin Hairer$^1$,
  Gautam Iyer$^2$,
  Leonid Koralov$^3$,
  Alexei Novikov$^4$ and\\
  Zsolt Pajor-Gyulai$^5$}
\institute{
  University of Warwick, \email{M.Hairer@Warwick.ac.uk}
  \and
  Carnegie Mellon University, \email{gautam@math.cmu.edu}
  \and
  University of Maryland, \email{koralov@math.umd.edu}
  \and
  Pennsylvania State University, \email{anovikov@math.psu.edu}
  \and
  New York University, \email{zsolt@cims.nyu.edu}
}
\maketitle
\begingroup%
  \renewcommand\thefootnote{}%
  \footnote{%
    This material is based upon work partially supported by
    the European Research Council (through grant \#615897 ``Critical'' to MH),
    the Philip Leverhulme trust (through a Research Leadership Award to MH),
    the National Science Foundation (through grants
    DMS-1252912 to GI,
    DMS-1515187 to AN,
    DMS-1309084 to LK),
    the Simons Foundation (through grant \#393685 to GI),
    an AMS-Simons Travel Grant to ZPG,
    and the Center for Nonlinear Analysis (through grant NSF OISE-0967140).
  }
  \addtocounter{footnote}{-1}%
\endgroup%

\begin{abstract}
  This paper studies the \emph{intermediate time} behaviour of a small random perturbation of a periodic cellular flow.
  Our main result shows that on time scales shorter than the diffusive time scale, the limiting behaviour of trajectories that start close enough to cell boundaries is a fractional kinetic process: A Brownian motion time changed by the local time of an independent Brownian motion.
  Our proof uses the Freidlin-Wentzell framework, and the key step is to establish an analogous averaging principle on shorter time scales.

  As a consequence of our main theorem, we obtain a homogenization result for the associated advection diffusion equation.
  We show that on intermediate time scales the effective equation is a \emph{fractional time} PDE that arises in modelling anomalous diffusion.
\end{abstract}

\section{Introduction}


The purpose of this paper is to study the \emph{intermediate time} behaviour of tracer particles passively advected by a periodic cellular flow.
Cellular flows arise in various contexts, most notably as a two-dimensional model for heat transport in Bernard convection cells.
Our interest in studying the \emph{intermediate time} behaviour stems from~\cite{Young88} (see also~\cite{YoungJones91}), which proposes a \emph{fractional kinetic} or non-Fickian model governing the behaviour on intermediate time scales.
This is in stark contrast to the well known diffusive behaviour on long time scales, and the deterministic Hamiltonian ODE behaviour on short time scales.

The position of tracer particles diffusing in a cellular flow is governed by the SDE
\begin{equation}\label{eqnSmallnoiseeq1}
d\tilde{X}_t=v(\tilde{X}_t) \, dt+\sqrt{\e} \, dW_t,
\qquad\text{with }\tilde X_0 \sim \mu.
\end{equation}
Here, $\mu$ is a probability measure on $\R^2$ representing the initial distribution,
$\epsilon$ is twice the molecular diffusivity,
$W$ is a standard two-dimensional Brownian motion.
For notational convenience we denote the law of the solution by $\Pe^\mu$, indicating the~$\mu$ and $\epsilon$ dependence on the probability measure instead of on the process~$\tilde X$, which we always take to be the canonical process.
Above, $v$ is the velocity field of a periodic cellular flow.
Namely, there exists a periodic function $H\colon \R^2 \to \R$ (known as the Hamiltonian, or stream function) such that
\begin{equation*}
  v = \nabla^{\perp} H \defeq
      \begin{pmatrix} -\partial_2 H \\ \phantom{-}\partial_1 H \end{pmatrix}.
\end{equation*}
Moreover,
all the critical points of $H$ are non-degenerate, and there is a connected level set of $H$, say $\mathcal L = \{ x \in \R^2 \;:\; H(x) = 0\}$, called the separatrix, which divides the plane into bounded regions (cells) that are each invariant under the (deterministic) flow of the vector field~$v$ (see Figure \ref{fgrCellular_flow1}). For simplicity of notation, we assume that $H$ has no saddle points inside the cells.
An example commonly used in fluid dynamics is $H(x_1,x_2) = \sin( x_1)\sin( x_2)$, as shown in Figure~\ref{fgrCellular_flow2}.
\begin{figure}[htb]
  \begin{minipage}{0.45\textwidth}
    \centering
    \includegraphics[width=\linewidth]{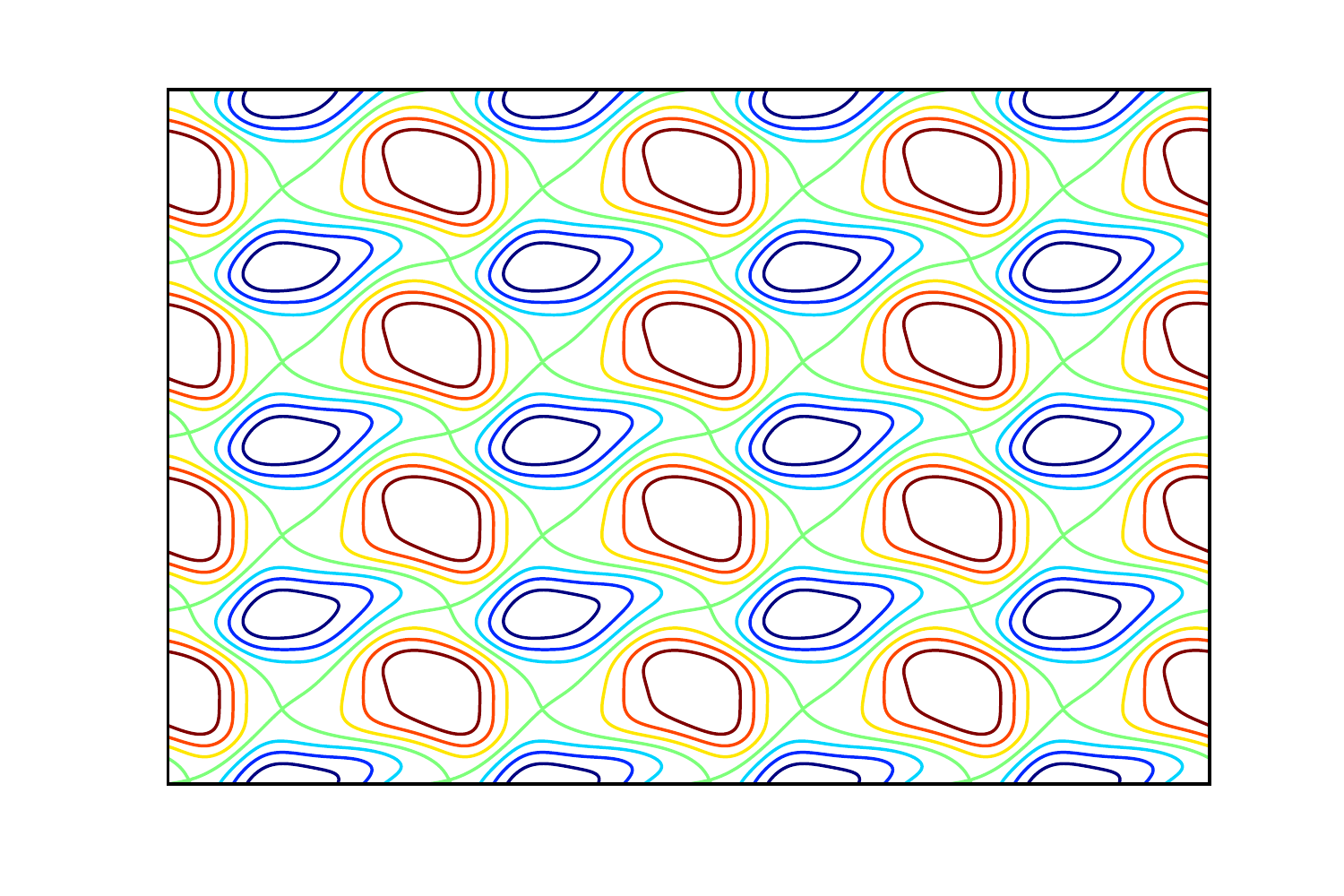}
    \caption{A contour plot of the Hamiltonian in a generic cellular flow.}
    \label{fgrCellular_flow1}
  \end{minipage}\hfill
  \begin{minipage}{0.45\textwidth}
    \centering
    \includegraphics[width=\linewidth]{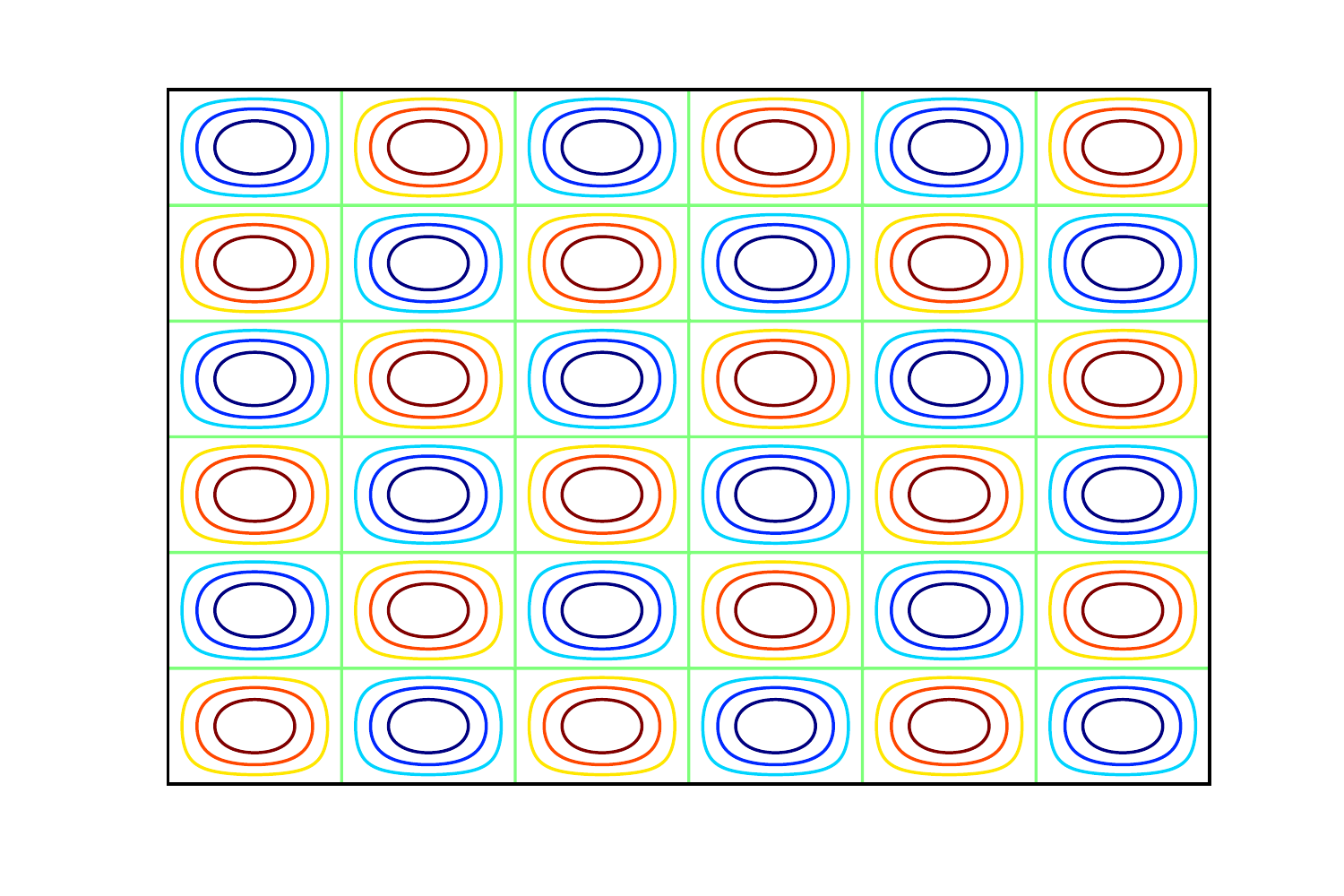}
    \caption{A contour plot of the Hamiltonian $H(x_1,x_2) = \sin( x_1)\sin( x_2)$}
    \label{fgrCellular_flow2}
  \end{minipage}
\end{figure}

The behaviour of $\tilde X$ on both short time scales (i.e.,\ time scales of order $1$) and long time scales (i.e.,\ time scales larger than $1/\epsilon$) is well known.
On short time scales, a large deviations principle~\cite[Chap.~4, Thm~1.1]{FreidlinWentzell12} guarantees that the trajectories of $\tilde X$ deviate from the deterministic trajectories of the flow~$v$ with an exponentially small probability.
On long time scales, standard homogenization results~\cite{Freidlin64} show that $\tilde X$ behaves like a Brownian motion with an enhanced diffusion coefficient.

This paper concerns the effective behaviour of $\tilde X$ on \emph{intermediate time scales}, i.e.,\ time scales much larger than $1$ and much smaller than $1/\epsilon$. 
If the initial condition of $\tilde X$ is chosen in such a way that $H(\tilde X_0) \neq 0$,
then this is again very well understood: at scales of order $1/\epsilon^\alpha$ with $\alpha \in (0,1)$, 
one sees a Brownian motion on the level sets of 
$H$. At scale $1/\epsilon$, one obtains a non-trivial diffusion \cite{FreidlinWentzell93}, as long as the diffusion 
in question does not reach the set $H=0$.
This leaves open the question of the behaviour when the initial condition is chosen close to $H=0$, and this is what
we address in this article.
For such starting points, the limiting behaviour on both time scales above is a \emph{time changed} Brownian motion.
This is a surprising and substantial departure from what is usually expected.
The vast majority of results concerning scaling limits of diffusions obtain a limiting behaviour that is again a diffusion, if not a rescaled Brownian motion.
A time changed Brownian motion was first obtained in~\cite{HairerKoralovPajorGyulai2014} on time scales of order $1/\epsilon$, and here we extend this result to \emph{much shorter} time scales.

Explicitly,
fix $\alpha \in (0, 1)$ and consider the time rescaled process
\begin{equation}\label{eqnZDef}
  Z_t
    = Z(t)
    \defeq
    \tilde X \paren[\Big]{\frac{\alpha\abs{\log\epsilon} t}{\epsilon^{1 - \alpha} }},
\end{equation}
where for notational convenience we sometimes denote time as an argument instead of a subscript.
The process~$Z$ focuses on the behaviour of $\tilde X$ at time scales of order $\abs{\log \epsilon} / \epsilon^{1 - \alpha}$, and the main result of this paper shows that~$Z$ can be spatially rescaled to converge to a \emph{time changed} Brownian motion, provided $\tilde X$ starts on 
(or very close to) cell boundaries. The reason for the extra $\abs{\log \epsilon}$ factor is the logarithmic slow-down of the underlying dynamical system as it approaches hyperbolic saddles, and is revisited in detail later (see also \cite{Kifer81}).
Our main result (Theorem~\ref{thmMainR2}) is a more general version of the following.

\begin{theorem}
There exists a symmetric strictly positive definite matrix $Q$ such that, 
if the initial distribution~$\mu^\epsilon$ is a delta measure at a point that belongs to the 
separatrix~$\mathcal L$, then the laws of $\epsilon^{\frac{1 - \alpha}{4}} Z$
  converge weakly to the law of $W^Q_{L}$.
  Here $W^Q$ is a Brownian motion on $\R^2$ with covariance matrix $Q$, and $L$ is the local time at 
  $0$ of an independent Brownian motion.
\end{theorem}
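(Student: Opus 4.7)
The plan is to adapt the Freidlin--Wentzell graph-reduction to the shorter intermediate time scale $t_\eps \defeq \alpha|\log\eps|/\eps^{1-\alpha}$ appearing in \eqref{eqnZDef}. Let $\Gamma$ denote the Reeb graph of $H$, in which each cell collapses to an edge emanating from a central vertex corresponding to the separatrix $\mathcal L$. Classical Freidlin--Wentzell theory shows that on the time scale $1/\eps$, the projection of $H(\tilde X_t)$ to $\Gamma$ converges to a diffusion with gluing conditions at the center. Our time scale $t_\eps$ is strictly shorter than $1/\eps$, so the graph process is confined to an $\eps$-dependent neighborhood of the central vertex; after rescaling, the logarithmic slowdown near hyperbolic saddles explains the $|\log\eps|$ factor in the time normalization, and the slow variable should converge to a Brownian motion $B$ on $\R$ near $0$, whose local time $L$ at $0$ will serve as the time change appearing in the statement.

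The first main step---identified in the abstract as the key technical point---is to establish this intermediate-time averaging principle rigorously: we need to show that $h^\eps_t \defeq c_\eps H(\tilde X_{t\, t_\eps})$, for an appropriate normalization $c_\eps$, converges in law to $B$ \emph{jointly} with the counting functional of excursions into the cells. The obstacle is that the classical averaging argument uses the full scale $1/\eps$ to equidistribute along streamlines, whereas at our scale only $O(\eps^{-(1-\alpha)/2})$ saddle transits occur. One must therefore control, uniformly in $\eps$, the exit distribution from a small neighborhood of each saddle (which selects the next cell visited), the mean occupation time near the separatrix, and the precise logarithmic slowdown of individual transits. This is where the bulk of the work will sit, and it drives the presence of the $|\log\eps|$ factor in~\eqref{eqnZDef}.

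Granted Step~1, we decompose the trajectory of $\tilde X$ into excursions: between successive returns to a suitable neighborhood of $\mathcal L$, the particle circulates essentially once around a cell and contributes a spatial displacement $\xi_k$. Using the rapid angular averaging along level curves of $H$ together with the symmetries that make the next cell asymptotically uniformly distributed among the neighbors of the current saddle, the $\xi_k$ are shown to be asymptotically centered and independent, with a common covariance $Q$ that is strictly positive definite because typical loop displacements span both coordinate directions. The number $N^\eps(t)$ of excursions completed by rescaled time~$t$ is asymptotic to $L_t$, as a functional consequence of Step~1 combined with the continuous mapping theorem for the local-time map.

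The theorem then follows from a functional CLT for $\sum_{k\le N^\eps(t)} \xi_k$: rescaling space by $\eps^{(1-\alpha)/4}$---the natural square-root scaling for $\sim \eps^{-(1-\alpha)/2}$ excursions---yields $W^Q_{L_t}$ in the limit, with $W^Q$ independent of $L$ because the spatial displacements are driven by the fast angular dynamics that decouple from the slow Hamiltonian process $B$ in the limit. Tightness of $\eps^{(1-\alpha)/4} Z$ in $C([0,T];\R^2)$ follows from moment bounds on the $\xi_k$ together with tightness of $h^\eps$, and the limit is identified excursion-by-excursion. The hard part is clearly Step~1; once a precise intermediate-time averaging statement is in hand, the remainder is fairly standard time-change and CLT machinery.
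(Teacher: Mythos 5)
Your plan follows essentially the same route as the paper: the key step is the intermediate-time averaging principle (Theorem~\ref{thmAvgPrinShortTime}), after which the paper likewise decomposes the trajectory into excursions from the separatrix, proves a limit theorem for the per-excursion displacements (Proposition~\ref{ppnCLTFirstHitShort}), and identifies the excursion count with the local time of the graph diffusion at the vertex via a downcrossing representation. Two minor inaccuracies that do not affect the structure: the limit of the projected slow variable is a Walsh-type diffusion on a star graph with edge-dependent coefficients $a_i$ rather than a scalar Brownian motion (though its local time at the vertex is still a constant multiple of Brownian local time), and the next cell is selected with probabilities proportional to $q_i=\oint_{\partial U_i}\abs{\nabla H}\,dl$ rather than uniformly.
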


Note that on the intermediate time scales we consider, if $\tilde X$ starts far away from the separatrix, it will simply make many rotations along 
the flow lines of $v$ without escaping from the cell where it starts.
Thus the assumption that $\tilde X$ starts on (or very close to) the separatrix is necessary in order to observe a non-trivial limiting behaviour. 


As a direct consequence, we also obtain an intermediate time homogenization result for the advection diffusion equation.
Let $\tilde \theta^\epsilon$ satisfy the PDE
\begin{equation}\label{eqnTildeTheta}
  \partial_t \tilde \theta^\epsilon = v \cdot \grad \tilde \theta^\epsilon + \frac{\epsilon}{2} \lap \tilde \theta^\epsilon
  \qquad\text{on } \R^2 \times (0, \infty),
\end{equation}
with initial data $\tilde \theta^\epsilon( x, 0 ) = \tilde \theta^\epsilon_0(x)$.
Standard homogenization results~\cite{PavliotisStuart08,FannjiangPapanicolaou94,Fannjiang02} show
that on time scales longer than $\CO(1/\epsilon)$, $\tilde \theta^\epsilon$ converges weakly to the solution of the standard heat equation,
with
an enhanced diffusion coefficient.
On intermediate time scales, we show $\tilde \theta^\epsilon$ converges to the solution of a \emph{time fractional} heat equation.
Again, this is somewhat unexpected, as the scaling limits of linear parabolic equations usually lead to a parabolic (spatially homogeneous) equation, and not a time fractional equation!

Explicitly, our main PDE result (Theorem~\ref{thmPDEHomog}) can be stated as follows.
\begin{theorem}
  For a fixed $\alpha \in (0, 1)$, define the rescaled functions $\theta^\epsilon$ and $\theta^\epsilon_0$ by
  \begin{equation}\label{eqnThetaEpsDef}
    \theta^\epsilon(x, t)
    = \tilde \theta^\epsilon \paren[\Big]{\frac{x}{\epsilon^{(1 - \alpha) / 4}}, \frac{\alpha \abs{\log \epsilon} t}{\epsilon^{1 - \alpha}} }
    \quad\text{and}\quad
    \theta_0^\epsilon( x ) = \tilde \theta_0^\epsilon \paren[\Big]{ \frac{x}{\epsilon^{(1 - \alpha) / 4} } }.
  \end{equation}
  If~$\theta^\epsilon_0 = \theta_0 \in \CC_b(\R^2)$ is independent of~$\epsilon$, then as $\epsilon \to 0$,
  $\theta^\epsilon$ converges%
  \footnote{%
    The notion by which $\theta^\epsilon \to \vartheta$ is related to the two scale convergence~\cite{Nguetseng89} and is described precisely later.
    Roughly speaking, one needs to test $\theta^\epsilon$ against an $\epsilon$-dependent measure $\hat \nu^\epsilon$ on $\R^2$, where the family of measures $(\hat \nu^\epsilon)$, when rescaled appropriately, converges to a probability measure supported on the separatrix.%
  }
  to $\vartheta$, where $\vartheta$ satisfies
  \begin{equation}\label{eqnVarTheta}
    r_0
    \caputo{1/2} \vartheta
      - \frac{1}{2} \brak{Q : \grad^2} \vartheta = 0,
    \qquad
    \vartheta(x, 0) = \theta_0(x),
  \end{equation}
  for some constant $r_0 > 0$ that can be computed explicitly in terms of $v$.
  Here $\caputo{1/2}$ denotes the \emph{Caputo derivative} of order $1/2$ (see for instance~\cite{Diethelm10}) and is defined by
  \begin{equation}\label{eqnCaputo12Def}
    \caputo{1/2} f
      = \frac{1}{\sqrt{\pi}} \frac{d}{dt} \int_0^t \frac{f(s) - f(0)}{(t - s)^{1/2}} \, ds,
  \end{equation}
  and $Q : \grad^2= \sum_{i,j} {Q}_{ij} \frac{\partial^2}{\partial x_i \partial x_j}$.
\end{theorem}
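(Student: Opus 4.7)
The plan is to reduce the PDE homogenization to the probability theorem \thmref{thmMainR2} via Feynman--Kac, and then identify the limiting expectation as the solution of \eqref{eqnVarTheta} via the classical subordination bridge between inverse stable subordinators and Caputo-type fractional PDEs. First, observe that the generator of \eqref{eqnSmallnoiseeq1} is exactly the right-hand side of \eqref{eqnTildeTheta}, so Kolmogorov's backward equation gives $\tilde\theta^\epsilon(x,t) = \E^x[\tilde\theta_0^\epsilon(\tilde X_t)]$. Plugging in the rescalings of \eqref{eqnThetaEpsDef}, the definition \eqref{eqnZDef} of $Z$, and the identity $\tilde\theta_0^\epsilon(y) = \theta_0(\epsilon^{(1-\alpha)/4} y)$ forced by $\theta_0^\epsilon = \theta_0$, yields
\[
  \theta^\epsilon(x,t) = \E\bigl[\theta_0\bigl(\epsilon^{(1-\alpha)/4} Z_t\bigr) \bigm| Z_0 = x/\epsilon^{(1-\alpha)/4}\bigr],
\]
so $\theta^\epsilon$ is an expectation of $\theta_0$ along the spatially rescaled process $\epsilon^{(1-\alpha)/4}Z$, which is precisely the object converging to $W^Q_{L_\cdot}$ by \thmref{thmMainR2}.

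Next, I would make precise the two-scale convergence alluded to in the footnote. Take test measures $\hat\nu^\epsilon$ on $\R^2$ concentrated at distance $O(\epsilon^{(1-\alpha)/4})$ from the rescaled separatrix $\epsilon^{(1-\alpha)/4}\cL$, normalized so that the dilations $\mu^\epsilon(dy) \defeq \hat\nu^\epsilon(\epsilon^{(1-\alpha)/4}\,dy)$ are probability measures on $\cL$ whose projections modulo $\Z^2$ converge weakly to a probability measure on the separatrix of a single cell. Integrating the Feynman--Kac identity against $\hat\nu^\epsilon$ and changing variables,
\[
  \int\theta^\epsilon(x,t)\,d\hat\nu^\epsilon(x) = \E^{\mu^\epsilon}\bigl[\theta_0\bigl(\epsilon^{(1-\alpha)/4} Z_t\bigr)\bigr].
\]
A measure-valued extension of \thmref{thmMainR2}, obtained by integrating the delta-initial-condition case and using the $\Z^2$-periodicity of $v$ to decompose the rescaled starting point into a lattice part converging to the macroscopic $x$ and a fundamental-cell part whose contribution vanishes, would then identify the limit of the right-hand side as $\int \vartheta(x,t)\,d\nu_0(x)$, where $\vartheta(x,t) \defeq \E[\theta_0(x + W^Q_{L_t})]$ and $\nu_0$ is the resulting macroscopic probability measure on $\R^2$. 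This measure-valued refinement is the hardest step: \thmref{thmMainR2} only delivers convergence from a single delta on $\cL$, and upgrading it into a translation-covariant statement uniform over starting points requires both tightness in a sufficiently strong topology and a careful identification of the joint limit.

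Finally, I would show $\vartheta$ solves \eqref{eqnVarTheta} via subordination. The local time $L$ at $0$ of a Brownian motion is continuous, non-decreasing, with right-continuous inverse $\tau_s = \inf\set{t : L_t > s}$ a $1/2$-stable subordinator whose Laplace exponent has the form $r_0\sqrt\lambda$, where $r_0 > 0$ is determined by the variance of the Brownian motion underlying $L$ (and therefore, ultimately, by $v$ via the constants appearing in \thmref{thmMainR2}). Writing $\tau \stackrel{d}{=} r_0^2 \tau'$ with $\tau'$ a standard $1/2$-stable subordinator and $L' = (\tau')^{-1}$, one has $L_t = L'_{t/r_0^2}$. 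By the classical link between inverse stable subordinators and time-fractional PDEs (see e.g.\ \cite{Diethelm10}), the function $\phi(x, s) \defeq \E[\theta_0(x + W^Q_{L'_s})]$ satisfies $\caputo{1/2}\phi = \tfrac{1}{2}[Q:\grad^2]\phi$ with $\phi(x, 0) = \theta_0(x)$. A direct computation shows that under the substitution $s = t/r_0^2$ the half-Caputo derivative picks up a factor $r_0^{-1}$, so that $\vartheta(x, t) = \phi(x, t/r_0^2)$ satisfies $r_0\caputo{1/2}\vartheta = \tfrac{1}{2}[Q:\grad^2]\vartheta$, which is precisely \eqref{eqnVarTheta}.
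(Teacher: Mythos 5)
Your proof is essentially correct, and its first half coincides with the paper's: both start from the Feynman--Kac representation $\theta^\epsilon(x,t) = \Ee^{x/\epsilon^{(1-\alpha)/4}}\theta_0(\epsilon^{(1-\alpha)/4}Z_t)$ and feed the rescaled process into \thmref{thmMainR2}. One correction to what you call the ``hardest step'': no measure-valued extension of \thmref{thmMainR2} is needed, since that theorem is already stated for arbitrary families $\nu_\epsilon$ of probability measures on $\R^2$ with $\hge^*\nu_\epsilon$ converging weakly (only the informal statement in the introduction specialises to a delta measure); your proposed upgrade ``by integrating the delta case'' would in any event require the uniformity in the initial condition that the paper builds into \thmref{thmAvgPrinShortTime} and Proposition~\ref{ppnCLTFirstHitShort}, so it is just as well that the general statement is directly available. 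Where you genuinely diverge is in identifying the limit as a solution of \eqref{eqnVarTheta}: you invoke the classical subordination bridge --- $L$ is the inverse of a $1/2$-stable subordinator, so $\E\,\theta_0(x+\tilde W^Q_{L_t})$ satisfies a time-fractional heat equation, and your scaling computation correctly converts a Laplace exponent $r_0\sqrt{\lambda}$ into the prefactor $r_0$ in front of $\caputo{1/2}$. (The canonical reference for that bridge is Meerschaert--Scheffler or \cite{MeerschaertSikorskii12}, which the paper itself points to, rather than Diethelm.) The paper instead first shows, via the Freidlin--Sheu It\^o formula on the graph (Proposition~\ref{ppnFKY}), that the two-scale limit $\theta(x,y,t)=\Eb^{(x,y)}\theta_0(\tilde W^Q_{L_t})$ solves the coupled system \eqref{eqnThetaSys} on $\R^2\times\cG$, and then eliminates the graph variable by solving the half-line heat equation on each edge and summing against the flux condition, arriving at \eqref{eqnThetaFPDE} with the explicit constant $r_0=\sum_i \bar q_i/\sqrt{a_i/2}$. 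Your route is shorter and suffices for the statement as quoted, which only concerns limit measures concentrating at the vertex ($\nu=\nu'\times\delta_O$); the paper's route additionally delivers the full two-scale limit \eqref{eqnThetaSys} for arbitrary limit measures on $\R^2\times\cG$ and identifies $r_0$ in closed form, whereas you would still need a short computation (via the excursion construction of Remark~\ref{rmkConstruct}) to express your Laplace-exponent coefficient in terms of the $a_i$, $q_i$, and hence of $v$.
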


Time fractional equations of the form~\eqref{eqnVarTheta} often arise when studying anomalous, or non-Fickian diffusions.
In this context, it was first suggested by Young~\cite{Young88} (see also~\cite{YoungPumirEtAl89,YoungJones91}) and supported by both numerics and a heuristic explanation.
Roughly speaking, on intermediate time scales, the heat near the separatrix diffuses to neighbouring cells, and also gets trapped in cell interiors.
This leads to a coupled system governing the effective behaviour, and eliminating the heat in cell interiors from this system leads to~\eqref{eqnVarTheta}.
We elaborate on this and carry out the details in Section~\ref{sxnPDEHomog}.
We remark, however, that even though this is a purely deterministic result, we prove it using our main probabilistic result (Theorem~\ref{thmMainR2}) and the Kolmogorov equation.
In lieu of a rigorous PDE proof of this result, we provide (in Appendix~\ref{sxnAsymptoticExpansion}) a formal asymptotic expansion motivating it.

\subsection*{Plan of this paper}
In order to place our results in the context of the existing literature, Section~\ref{sxnLongShort} provides a brief overview of the effective behaviour of tracer particles on both long and short time scales.
This section is independent of the rest of the paper and can be skipped by the reader familiar with the literature.

In Section~\ref{sxnMainResults}, we state the main result of our paper 
(Theorem~\ref{thmMainR2}) proving the convergence of~$\tilde X$ to an effective process on intermediate time scales.
An important step in the proof is Theorem~\ref{thmAvgPrinShortTime}, which is an analogue of the Freidlin-Wentzell averaging principle on these time scales.
Before proving the two theorems stated above, we digress and prove an intermediate time homogenization result for the advection diffusion equation governing the density of tracer particles (Theorem~\ref{thmPDEHomog}).
This is presented in Section~\ref{sxnPDEHomog}, and is independent of all subsequent sections (except Appendix~\ref{sxnAsymptoticExpansion}).

The remainder of the paper is devoted to proving our main results.
In Section~\ref{sxnMainProof} we prove Theorem~\ref{thmMainR2}, modulo an estimate on how far~$Z$ can travel before exiting a small neighbourhood of the separatrix (Proposition~\ref{ppnCLTFirstHitShort}).
In Sections~\ref{sxnShortTimeAveraging} and~\ref{sxnShortTimeAveraging2}, we prove the intermediate time averaging principle (Theorem~\ref{thmAvgPrinShortTime}).
In Appendix~\ref{sxnCLTFirstHitShort} we prove Proposition~\ref{ppnCLTFirstHitShort}.
Finally, in Appendix~\ref{sxnAsymptoticExpansion}, we provide a formal asymptotic expansion, which serves as an alternative, purely PDE, approach to derive our intermediate time PDE homogenization result (Theorem~\ref{thmPDEHomog}).

\section{The effective short time and long time behaviour of tracer particles}\label{sxnLongShort}

This section contains a brief review of results concerning the effective behaviour of tracer particles on long time scales and short time scales.
Its main purpose is to place our results in the broader context of existing literature, and the familiar reader can skip directly to Section~\ref{sxnMainResults}.

\subsection{Homogenization: Effective behaviour on long time scales}

Well known homogenization results show that
on time scales much larger than the diffusive time scale $1/\epsilon$,
the effective behaviour of~$\tilde X$ is that of a Brownian motion with an enhanced diffusion coefficient.
Explicitly, consider the rescaled process $\tilde Z = \tilde Z^{\epsilon, \delta}$, defined by
\begin{equation}\label{eqnZepDeltaDef}
  \tilde Z_t = \tilde Z^{\e, \delta}_t \defeq \delta^{1/2} \tilde X_{t/\delta},
\end{equation}
where for clarity we
suppress
the dependence of $\tilde X$ and $\tilde Z$ on the parameters $\epsilon$ and $\delta$.
Freidlin~\cite{Freidlin64} (see also~\cite{Olla94,BensoussanLionsEtAl78,PavliotisStuart08}) proved that for fixed~$\epsilon$ we have
\begin{equation*}
  \tilde Z = \tilde Z^{\e, \delta} \xrightarrow[\delta \to 0]{\mathcal L}
    W^{D\eff(\epsilon)},
\end{equation*}
where $D\eff(\epsilon)$ is a constant $2 \times 2$ positive matrix known as the \emph{effective diffusivity}, and $W$ is a 2D Brownian motion with the covariance matrix $D\eff(\epsilon)$.
Intuitively, the temporal rescaling involves waiting for longer and longer times as $\delta \to 0$.
In this time, the process $\tilde Z$ spreads out further and further, and rescaling space by a factor of $\sqrt{\delta}$ produces a non-trivial limit.
The spatial rescaling is akin to an observer zooming out until the microscopic details of the cellular flow cannot be seen anymore and can effectively be replaced by a homogeneous background.

The effective diffusivity $D\eff(\epsilon)$ can be computed explicitly by solving a cell problem, and its asymptotic behaviour as $\epsilon \to 0$ has been extensively
studied~\cite{Childress79,RosenbluthBerkEtAl87,ChildressSoward89,FannjiangPapanicolaou94,Koralov04}.
In particular, it is well known that
\begin{equation}\label{eqnDeff}
  D\eff(\epsilon) = \CO(\sqrt{\epsilon})
\end{equation}
as $\epsilon \to 0$.
We observe that~$D\eff(\epsilon)$ is much larger than the molecular diffusivity $\epsilon$ in~\eqref{eqnSmallnoiseeq1} for small $\epsilon$.

To address the time scales involved, we consider the double limit of $\tilde Z$ as both $\epsilon$ and $\delta$ approach $0$.
Using~\cite{Fannjiang02} (see also~\cite{IyerKomorowskiEtAl14}) it follows that
\begin{equation}\label{eqnZTildeEpDeltaLim}
  \frac{\tilde Z}{ \sqrt{D\eff(\epsilon)} } \xrightarrow[\substack{\epsilon, \delta \to 0,\\\delta \ll \epsilon}]{\mathcal L} W.
\end{equation}
Rewriting this in terms of the original process, this means that~$\tilde X$ behaves like a rescaled Brownian motion on time scales \emph{much larger} than $1 / \epsilon$.


\subsection{Averaging and the effective behaviour on the transition time scale}

As discussed in the previous section, $\tilde X$ homogenizes on time scales larger than
$\CO(1 / \epsilon)$.
Under a compactness assumption (e.g., if the periodic flow is replaced by a flow on a torus)
classical results of Freidlin (discussed below) show that~$\tilde X$ averages along the flow lines of~$v$.
In the non-compact setting that we consider, a recent result~\cite{HairerKoralovPajorGyulai2014} shows that~$\tilde X$ transitions between the homogenized and averaged behaviour in a very natural way, and we describe this behaviour here.

To study the behaviour on time scales of order $t \approx 1 / \epsilon$, consider the time rescaled process $X$ defined by
\begin{equation}\label{timechange}
  X_t = X^\epsilon_t \defeq \tilde X_{t / \epsilon}.
\end{equation}
In this case, $X$ satisfies the SDE
\begin{equation*}
  dX_t=\frac{1}{\eps}v(X_t) \, dt+dW_t,
  \qquad\text{with }
  X_0 \sim \mu.
\end{equation*}
When $\epsilon$ is small, $X$ moves very fast along trajectories of $v$, and diffuses slowly across them.

To explain further,  assume that $H(x)$ is 1-periodic in $x_1$ and $x_2$.
Let $\torus = \R^2 / \Z^2$ be the two-dimensional 
torus, and $\pi:\R^2 \to \torus$ be the projection map.
The Reeb graph~\cite{Reeb46} of $H$ (where~$H$ is viewed as a function on the torus) is obtained by mapping the connected components of level sets of $H$ to individual points, and using a metric that is locally defined by~$H$.
For the Hamiltonians we consider, the Reeb graph is star shaped with each edge corresponding to a cell, and the distance to the vertex corresponding to the absolute value of the Hamiltonian.
One example is shown in Figure~\ref{fig:proj2graph}.
\begin{figure}[htb]
  \centering
  \begin{minipage}{.45\linewidth}
    \centering
    \begin{tikzpicture}
      \node[inner sep=0pt] (russell) at (0,0)
	{\includegraphics[width=\linewidth]{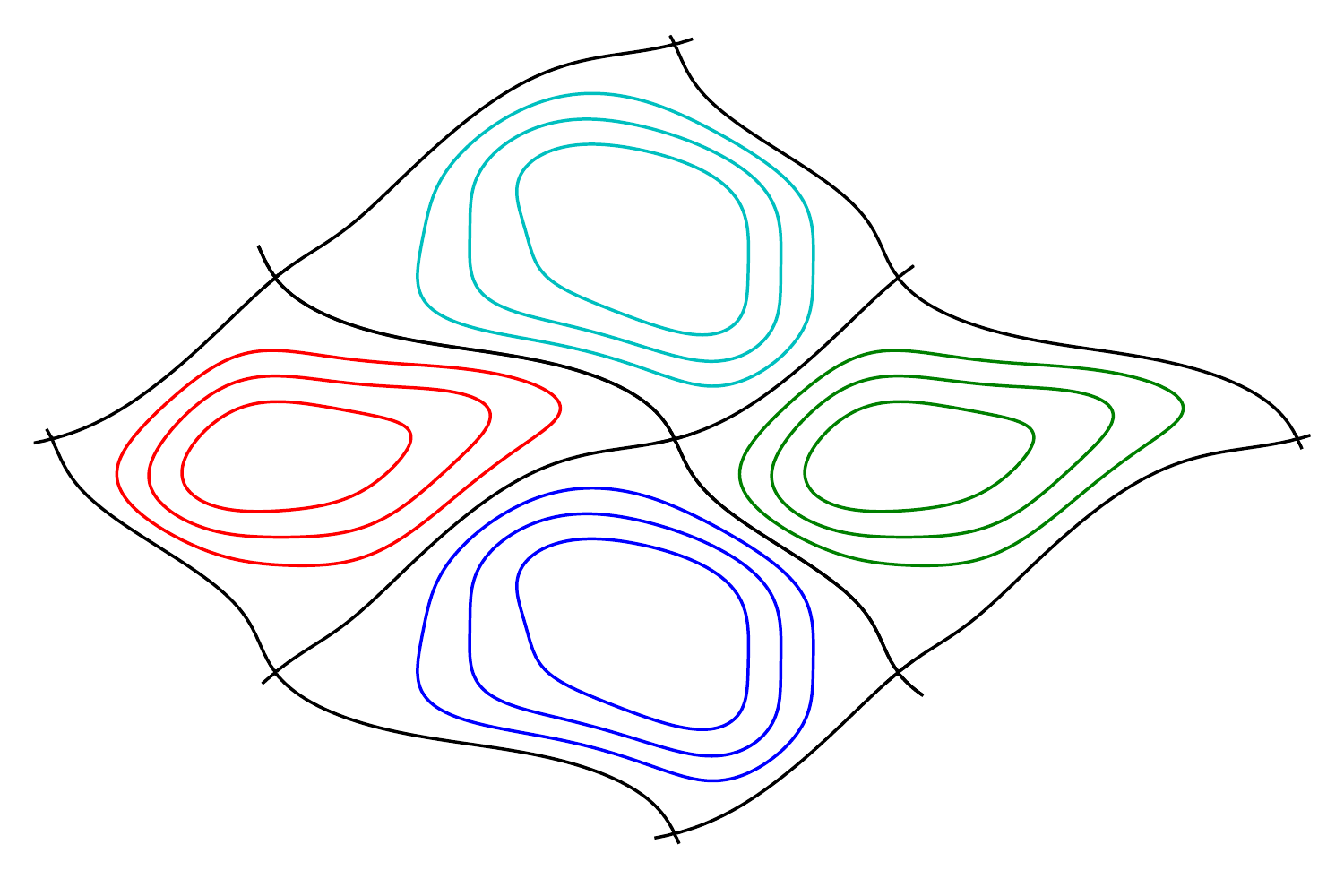}};
      \node at (-2.05,0) {$\mathbf 1$};
      \node at (-0.15,1.1) {$\mathbf 2$};
      \node at (1.3,-0.05) {$\mathbf 3$};
      \node at (-0.15,-1.0) {$\mathbf 4$};
    \end{tikzpicture}
  \end{minipage}
  \hfil
  \begin{minipage}{.45\linewidth}
    \centering
    \resizebox{\linewidth}{!}{
    \begin{tikzpicture}[line width=1pt]
      \draw[color=red] (0,0) -- (-4,0)
	node[above right]{\color{black}$\Gamma(\mathbf{1})$};
      \draw[color=cyan!80!black] (0,0) -- (0,3)
	node[below right]{\color{black}$\Gamma(\mathbf{2})$};
      \draw[color=green!60!black] (0,0) -- (4,0)
	node[above left]{\color{black}$\Gamma(\mathbf{3})$};
      \draw[color=blue] (0,0) -- (0,-3)
	node[above left]{\color{black}$\Gamma(\mathbf{4})$};
      \node[circle,inner sep=3pt,fill=black] at (0,0) [label=below right:{$O=\Gamma(\CL)$}] {};
    \end{tikzpicture}
    }
  \end{minipage}
\caption{The graph corresponding to the structure of the level sets of $H$ on $\torus$}
\label{fig:proj2graph}
\end{figure}

Given $x \in \torus$, define $\Gamma(x)$ to be the point on the Reeb graph corresponding to the connected component of the level set of $H$ that contains $x$.
Freidlin and Wentzell~\cite{FreidlinWentzell93} proved
that
\begin{equation*}
  \Gamma\paren[\big]{ \pi( X ) } \xrightarrow[\epsilon \to 0]{\mathcal L}
  Y,
\end{equation*}
where $Y$ is a diffusion on the Reeb graph with $Y_0 = \Gamma(X_0)$ and with a specific gluing condition at the interior vertex that can be determined explicitly in terms of the Hamiltonian~$H$.
(The exterior vertices are inaccessible and require no boundary condition.)
This is the averaging principle.%
\footnote{%
  Strictly speaking, the classical averaging principle~\cite{FreidlinWentzell93} requires $H(x) \to \infty$ as $\abs{x} \to \infty$ instead of compactness.
  These results can, however, be readily adapted to the scenario where the domain is compact.}

We emphasise that this only determines the effective behaviour of $X$ \emph{projected} onto the compact Reeb graph of $H$, when $H$ is viewed as a function on the torus.
A recent paper~\cite{HairerKoralovPajorGyulai2014} showed how this can be used to obtain the effective behaviour of $X$ on the whole plane $\R^2$.
The main theorem in~\cite{HairerKoralovPajorGyulai2014} shows that
\begin{equation}\label{eqnHKPG1}
  \epsilon^{1/4} X \xrightarrow[\epsilon\to0]{\mathcal L}
    W^{Q}_{L}.
\end{equation}
Here
$Q$ is a strictly positive definite matrix,
and
$W^{Q}$ is a Brownian motion with the covariance matrix $Q$.
The process $L$ is the local time of the limiting diffusion $Y$ at the vertex of the Reeb graph,
and is independent of~$W^Q$.
The notation~$W^Q_{L}$ in~\eqref{eqnHKPG1} above refers to the process $W^Q$, time changed by the process $L$.

 To relate this to the classical homogenization results, note that equation~\eqref{eqnHKPG1} provides information on the effective behaviour of $\tilde X$ on time scales of order $1 / \epsilon$; the borderline time scale, beyond which homogenization results are valid.
In fact, for the process~$\tilde Z^{\epsilon, \delta}$ defined by~\eqref{eqnZepDeltaDef}, the result of~\cite{HairerKoralovPajorGyulai2014} states that for $\delta = \epsilon$, the limiting process in~\eqref{eqnZTildeEpDeltaLim} is now a \emph{subordinated} Brownian motion.
In contrast, for $\delta \ll \epsilon$ (as we had in~\eqref{eqnZepDeltaDef}), the limiting process is simply an effective Brownian motion without any subordination.

In this spirit, even though the construction of the covariance matrix $Q$ in \cite{HairerKoralovPajorGyulai2014} is not explicit, we can find $Q$ by a matching argument with the existing literature on the effective diffusivity.
Indeed, since the process~$Y$ is ergodic on the Reeb graph, we must have
\begin{equation*}
  \lim_{t \to \infty} \frac{L(t)}{t} = \rho,
\end{equation*}
for some $\rho \in (0, \infty)$.
This implies that for large $t$, $W^{Q}_{L(t)}$ has approximately the same law as a Brownian motion with covariance matrix $\rho Q$.
Comparing this with \eqref{eqnZTildeEpDeltaLim} and \eqref{eqnHKPG1}, and taking the time change \eqref{timechange} into account, we get
\begin{equation}\label{Qmatrix}
Q=\frac{1}{\rho}\cdot\lim_{\e\to 0}\frac{D_{\text{eff}}(\e)}{\sqrt{\e}}.
\end{equation}

\subsection{Large Deviations: Effective behaviour on short time scales}

The next natural asymptotic regime is ``intermediate'' time scales for which $1 \ll t \ll 1 / \epsilon$.
This, however, is the main focus of our paper and is described along with our main results in Section~\ref{sxnMainResults}.
Instead, we conclude this section by briefly describing short time scales.

On time scales of order~$1$, the trajectories of $\tilde X$ deviate from the flow lines of~$v$ with an exponentially small probability.
To elaborate, let $\phi\colon \R^2 \times \R_+ \to \R^2$ be%
\footnote{
  Throughout this paper we use the convention that $\R_+ = [0, \infty)$.
}
the flow of the vector field~$v$, defined by the ordinary differential equation
\begin{equation}\label{eqnFlowOfV}
  \partial_t \phi_t(x) =v \circ \phi_t(x),
  \qquad \text{with}~~\phi_0(x) = x.
\end{equation}
  
Then,
for every $T,\eta>0$, we have
\begin{equation*}
  - \log \paren[\Big]{
      \Pe^x \paren[\Big]{
	\sup_{t\in[0,T]} \abs{\tilde{X}_t - \phi_t(x)}>\eta
      }
  } = \CO(\epsilon),
\end{equation*}
where we write $\Pe^x = \Pe^{\delta_x}$ for brevity.
For details, we refer the reader to \cite[Chapter 4, Theorem 1.1]{FreidlinWentzell12}.

This is not surprising as the qualitative effect of the noise is a motion across the flow lines on a time scale of order $1 / \e$, which is much longer than the order one natural time scale of the deterministic motion.
We remark, however, that at the slightly longer time scale $t \approx |\log(\varepsilon)| $, an interesting behaviour is observed near the separatrices.
The effective process in this regime is a piecewise constant \emph{non-Markovian} process that jumps between the saddle points of~$H$, and we refer the reader to~\cite{Bakhtin2010,BakhtinAlmada2011} for details. 

\section{Main results: Effective behaviour on intermediate time scales}\label{sxnMainResults}

The main contribution of this paper is the precise description of the effective behaviour of $\tilde X$ on \emph{intermediate} time scales where $1 \ll t \ll 1/\epsilon$.
As we have outlined earlier, the effective behaviour of $\tilde X$ on these time scales might seem trivial  at first glance.
Indeed, convection only transports $\tilde X$ along flow lines of~$v$, which are all closed orbits inside each cell.
On the other hand, for diffusion to transport~$\tilde X$ to a different cell, it will take time of order $1/\epsilon$, which is much longer than the time scales under consideration.
Thus, if $\tilde X$ starts at a generic point inside one of the cells, it will
simply make many rotations along the flow lines of~$v$ without escaping the cell.

The interesting behaviour is observed when $\tilde X$ starts close enough to (or on) the separatrix.
The diffusion is then strong enough to transport $\tilde X$ from one cell to another and, combined with the effect of the drift, the process $\tilde X$ can conceivably travel large distances in a short time.
Indeed, a recent result~\cite{IyerNovikov16} proves that on time scales for which $1 \ll t \ll 1 / \epsilon$, the variance of $\tilde X_t$ is of order $\sqrt{t}$, up to a logarithmic correction.
The main result of the present article goes much further than a
variance estimate, and provides an effective process on these intermediate time scales.

For a given $\alpha \in (0, 1)$, we study the behaviour of $\tilde X$ on time scales of order%
\footnote{%
  Choosing $\alpha \in (0, 1)$ and restricting to time scales of order~$\abs{\log \epsilon} / \epsilon^{1 - \alpha}$ is performed mainly for convenience, and does not have any bearing on the final result.
  In fact, the main results of this paper can be formulated more generally by choosing a parameter~$\delta = \delta(\epsilon)$ such that both $\delta \to 0$ and $\epsilon / \delta \to 0$ as $\epsilon \to 0$.
  Now a description of~$\tilde X$ on time scales of order~$\delta$ can be obtained from our main results by replacing all occurrences of $\epsilon^{1-\alpha}$, $\epsilon^{\alpha / 2}$ and $\alpha \abs{\ln \epsilon}$ with $\delta$, $(\delta / \epsilon)^{1/2}$ and $\ln (\delta / \epsilon)$ respectively.%
}
$\abs{\log \epsilon} / \epsilon^{1 - \alpha}$ using the time rescaled process~$Z = Z^\epsilon$ defined by~\eqref{eqnZDef}.
As before, we suppress the $\epsilon$-dependence of the process
$Z$ and use time as an argument instead of a subscript when notationally convenient.
Clearly, time scales of order $\abs{\log \epsilon} / \epsilon^{1-\alpha}$ are shorter than time scales of order $1/\epsilon$, and longer than time scales of order $1$.

We describe the effective behaviour of~$Z$ in two steps:
First, we compactify the state space by projecting $Z$ onto the periodic torus.
In this case, we prove a direct analogue of the classical Freidlin-Wentzell averaging principle \cite{FreidlinWentzell12} on shorter time scales, and show that the limiting process is a diffusion~$Y$ on a (rescaled) Reeb graph.
Next, we show that the limiting behaviour of~$Z$ on $\R^2$ is exactly an independent two-dimensional Brownian motion time changed by the local time of~$Y$ at the vertex of the rescaled Reeb graph.
That is, the effective process only moves when the graph diffusion~$Y$ is at the vertex.
These steps are described below in Sections~\ref{sxnIntermediateAveraging} and~\ref{sxnIntermediateR2} respectively.

\subsection{Intermediate time averaging on the torus}\label{sxnIntermediateAveraging}

The purpose of this section is to state an analogue of the classical Freidlin-Wentzell averaging principle \cite{FreidlinWentzell12} when~$Z$ is projected onto the torus.
While we state our result in the context of cellular flows, it is applicable more generally to behaviour of Hamiltonian systems  around heteroclinic connections.

We begin with some notation describing the geometry of the Hamiltonian and the projection on the Reeb graph.
We recall that we normalised $H$ so that it has period~$1$, and the separatrix, denoted by $\mathcal L$, is exactly
\begin{equation*}
  \mathcal L = \{ x \in \R^2 \;:\; H(x) = 0\},
\end{equation*}
and is assumed to be connected.
Let $\torus \defeq \R^2 / \Z^2$ be the torus, $\pi\colon \R^2 \to \torus$ be the projection map, and define $\CL_{\torus} = \pi( \CL)$.  
Let $A_1$, \dots, $A_M$ denote the saddle points of $H$ on the separatrix~$\CL_{\torus}$.
Then $\CL_{\torus}$ (or $\CL$) is the union of the saddles $\set{A_i}$ (or $\pi^{-1} (\set{A_i})$, respectively), and the heteroclinic orbits connecting these saddles.
For notational simplicity in the proof, we assume
that there are no homoclinic orbits (i.e.,\ orbits that connect a saddle to itself).

By Euler's polyhedron formula (recall that the torus has Euler characteristic zero), there are exactly~$M$ connected components of the complement of the separatrix $\torus \setminus \mathcal L_{\torus}$, and we denote these domains by $U_1$, \dots, $U_M$.
(There is, however, no particular relation between the numbering of the $U_i$'s and that of the $A_i$'s.)
For convenience, we further assume that there are no saddle points of $H$ in the interior of the sets $U_1$, \dots, $U_M$.

We now define the space $\cG$ that serves as the rescaled Reeb graph of~$H$.
Let $\cG$ be the topological quotient space obtained from $\{1,\dots,M\}\times\mathbb{R}^+$ by identifying all the points $(1,0), \ldots, (M,0)$ with each other.
We observe that~$\cG$ is a star shaped graph with semi-infinite edges $I_i \defeq \{i\}\times \mathbb{R}^+$, corresponding to the rescaled distance into the interior of~$U_i$, and one interior vertex $O=(1,0) = \cdots = (M, 0)$ corresponding to the separatrix~$\mathcal L_{\torus}$. A natural metric on $\cG$ is given by
\begin{equ}[e:defdG]
d_\cG((i,y),(j,\bar y)) = 
\left\{\begin{array}{cl}
	|y-\bar y| & \text{if $i=j$,} \\
	|y| + |\bar y| & \text{otherwise.}
\end{array}\right.
\end{equ}
Define the projection $\Gammaeps \colon \torus \to \cG$ by
\begin{equation}\label{eqnGammaepsDef}
  \Gammaeps(x) = \paren[\big]{ i,\e^{-\alpha/2}|H(x)| }
  \quad\text{if }
  x \in \overline{U}_i,
\end{equation}
and extend it periodically to $\R^2$.
Note that $\Gammaeps$ projects each invariant region $U_i$ into an edge $I_i$ of the graph and $\Gammaeps(\CL_{\torus})=O$.
Even though the sets $\overline{U}_i$ overlap, the map $\Gammaeps$ is well-defined since the points $(i,0)$ have all been identified.

We claim that the law of the projected process
$(\Gammaeps \circ \pi)(Z)$
converges weakly to a process $Y$ on the graph $\cG$.
The limiting process~$Y$ is a diffusion, and it can be characterised by its generator~$\mathcal A$. We describe this before stating the main convergence result of this section.

On the $i$-th edge of the graph, define the operator $\mathcal A_i$ by
\[
  \mathcal{A}_i=\frac{a_i}{2}D_i^2.
\]
Here $D_i$ denotes the derivative along the $i$-\text{th} edge of~$\cG$, and the coefficients $a_i$ are defined by
\begin{equation}\label{eq:shorttimecoeff}
a_i \defeq q_i \cdot\lim_{\e \to 0}\frac{\abs{\log\epsilon}}{T_i(\e^{1/2})},
\end{equation}
where
\begin{equation}\label{eqnQiTi}
  q_i=\oint_{\partial U_i} \abs{\nabla H} \,dl
  \qquad\text{and}\qquad
  T_i(h)=\oint_{\{\abs{H(x)}=h\}\cap U_i} \abs{\nabla H}^{-1} \, dl.
\end{equation}
We recall that $T_i(h)$ 
is the time the flow~$\phi$ (defined in equation~\eqref{eqnFlowOfV}) takes to complete one rotation along the periodic orbit starting from any point $x \in U_i$ for which $\abs{H(x)} = h$.
Since $\partial U_i$ contains hyperbolic saddles, we know that as $h \to 0$, the period $T_i(h)$ 
diverges at a logarithmic rate, and hence $a_i$ is finite and strictly positive.

Now we define the domain $D(\mathcal A)$ to be the set of all functions $F$ that satisfy the following conditions:
\begin{enumerate}[\hspace{4ex}(a)]
  \item
   $F \in \CC_0(\cG) \cap \CC^2( \cG \setminus\{O\})$.
    That is, $F$ is continuous on $\cG$, tends to zero at infinity, and is twice continuously differentiable away from the interior vertex $O$.

  \item
Writing $\one_A$ for the
indicator function of a set $A$, the function
    \begin{equation}\label{eqnADef}
      y \mapsto \sum_{i = 1}^M \one_{\set{y \in I_i}} \mathcal A_i F(y),
    \end{equation}
    defined on $\cG \setminus\{O\}$, extends to a $\CC_0$ function on all of $\cG$.
  \item
    The function~$F$ satisfies the flux condition
    \begin{equation*}
      \sum_{i=1}^M q_iD_iF(O)=0,
    \end{equation*}
\end{enumerate}
Finally, for $F \in D(\mathcal A)$ we define $\mathcal A F$ to be the unique $\CC_0(\cG)$ extension of the function defined in~\eqref{eqnADef}.

Replicating results from~\cite{Mandl68} for our one-dimensional operator, we can show that for every $u \in D(\mathcal A)$ and $\lambda > 0$ the resolvent equation $\lambda f - \mathcal{A}f = u$ has a unique solution $f \in \mathcal{D}(\mathcal{A})$.
Since $\mathcal A$ is a closed operator, the Hille-Yosida theorem~\cite[Theorem 2.2 in Ch 4]{EthierKurtz86}, shows that $\mathcal{A}$ generates a strongly continuous positive contraction semigroup on $\CC_0(\cG) = \overline{D(\mathcal{A})}$, the $L^\infty$
closure of  $D(\mathcal{A})$.
Therefore there is a Fellerian Markov family~$Y$ with generator $\mathcal{A}$ (see~\cite[\S 4.1-4.2]{EthierKurtz86}), and we use the Kolmogorov continuity theorem to replace~$Y$ with a modification with continuous trajectories on~$\cG$.
The process $Y$ will arise in the main result of this subsection (Theorem~\ref{thmAvgPrinShortTime}, below) as the weak limit of $\Gammaeps (Z)$.

As with the process $Z$, we transfer the dependence of~$Y$ on its initial position to the associated probability measure.
When the law of $Y_0$ is $\mu$, we denote the corresponding probability measure on $\CC(\R_+,\cG)$ by $\Pb^\mu$, and the associated expectation operator by~$\Eb^\mu$.
When $\mu$ is concentrated at a point $y\in \cG$, we will simply write $\Pb^y$ and $\Eb^y$ as appropriate.

\begin{remark}\label{rmkConstruct}
The process~$Y$ can alternately be constructed directly
as follows.
Take a standard Brownian motion $B$ and decompose it into excursions away from the origin. Since there are countably many such excursions, we can enumerate them.
Say that the $k$th excursion happens during the interval $(s_k, t_k)$, then these intervals are all disjoint and the complement of their union consists precisely of the
null set of times for which $B(t) = 0$.
Consider now a sequence $i_k$ of i.i.d.\ $\{1,\ldots,M\}$-valued random variables independent of $B$ with $\P(i_k = j)$ proportional to $q_j / \sqrt{a_j}$.
We then define a $\cG$-valued process $Y$ in the following way.
If $t \in (s_k,t_k)$ for some $k$ and $i_k = j$, we set $Y_t = (j, \sqrt{a_j} |B(t)|) \in \cG$, otherwise we set $Y_t = O$.
In order to start this process with an initial condition $y = (j,c) \neq O$, one can perform the same construction, with the difference that one sets $B(0) = c/\sqrt{a_j}$, and we set the value $i_k$ corresponding to the excursion containing time $0$ to $j$.
We refer the reader to~\cite{Lejay06} for more details and various other constructions.
\end{remark}

We can now state the main result of this section.

\begin{theorem}\label{thmAvgPrinShortTime}
  Let $\mu^{\e}$ be a family of probability measures on $\torus$ such that the push forward measures~$\Gammaeps^* \mu^\epsilon$ converge weakly, as $\epsilon\to0$, to a probability measure $\mu$ on $\cG$.
  Then the laws of $\Gammaeps(Z)$ under $\Pe^{\mu^\eps}$
converge weakly 
to that of the
process $Y$ under $\Pb^\mu$.
That is, for every bounded continuous $f \colon \CC(\R_+; \cG) \to \R$, we have
\begin{equation*}
  \lim_{\epsilon \to 0} \Ee^{\mu^\epsilon} f( \Gammaeps (Z_\cdot) )
  =  \Eb^{\mu} f( Y_{\cdot} )\;.
\end{equation*}
Moreover, for fixed $\mu$ and $f$, the convergence is uniform with respect to all choices of $\mu^\eps$ such that $\Gammaeps^*\mu^{\e} =  \mu$.
\end{theorem}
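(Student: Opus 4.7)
The plan is to prove weak convergence via the martingale problem. Since $\mathcal{A}$ generates a Feller semigroup on $\CC_0(\cG)$ (as noted after the definition of $D(\mathcal{A})$), standard results imply uniqueness of the $(\mathcal{A}, D(\mathcal{A}))$-martingale problem. It therefore suffices to establish: (i) the family of laws of $\Gammaeps(Z)$ under $\Pe^{\mu^\epsilon}$ is tight on $\CC(\R_+, \cG)$ (endowed with the metric $d_\cG$ from~\eqref{e:defdG}); and (ii) every subsequential limit solves the martingale problem for $(\mathcal{A}, D(\mathcal{A}))$ with initial law $\mu$. Uniform convergence in $\mu^\epsilon$ is automatic because every step in the argument below depends on $\mu^\epsilon$ only through $\Gammaeps^* \mu^\epsilon$.

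\textbf{Tightness.} I would verify Aldous' criterion. The key estimates are: (a) a moment bound of the form $\Ee^{\mu^\epsilon} \sup_{s \le t} \epsilon^{-\alpha/2} |H(Z_s)|^2 \lesssim 1 + t$, obtained by applying It\^o's formula to $\epsilon^{-\alpha} H^2/2$ on each invariant region $U_i$ and using that $v\cdot\nabla(H^2/2) = 0$; and (b) a quadratic-variation estimate showing that for $\eta > 0$, the stopping-time modulus of $\Gammaeps(Z)$ is controlled by $\eta \cdot \sup_i a_i$ up to vanishing errors. The crucial point is that the logarithmic slow-down near the saddles exactly compensates the extra $\alpha|\log\epsilon|$ in the time rescaling: when $|H(Z_t)| = \epsilon^{\alpha/2} y$ with $y$ bounded, the ratio $\alpha|\log\epsilon|/T_i(\epsilon^{\alpha/2} y)$ stays bounded and converges to $a_i/q_i$ as $\epsilon\to 0$, because $T_i(h) \sim C_i|\log h|$.

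\textbf{Identification of the limit away from the vertex.} For $F \in D(\mathcal{A})$ supported in a compact subset of $\cG \setminus \{O\}$, one wants to show
\[
  M^\epsilon_t \defeq F(\Gammaeps(Z_t)) - F(\Gammaeps(Z_0)) - \int_0^t \mathcal{A} F(\Gammaeps(Z_s))\,ds
\]
is an asymptotic martingale. Applying It\^o's formula to $F\circ\Gammaeps$, the transport term $\epsilon^{-1}v\cdot\nabla(F\circ \Gammaeps)$ has mean zero after averaging along the periodic orbits of $v$ inside $U_i$ (thanks to the identity $\oint \nabla H \cdot v\,dt = 0$), and the diffusive term produces, after rescaling and averaging, exactly $\frac{1}{2}a_i D_i^2 F$ by the computation already used in tightness. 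Making this rigorous is a standard averaging argument \textit{\`a la} Freidlin--Wentzell, based on dividing a trajectory into segments corresponding to one full rotation along a level set, applying the ergodic theorem on each such rotation, and using the rotation time asymptotics $T_i(h) \sim C_i|\log h|$ together with the definition~\eqref{eq:shorttimecoeff} of $a_i$.

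\textbf{The flux condition at $O$: the main obstacle.} The principal difficulty is to identify the behaviour of the limit at the interior vertex, where the test function $F \in D(\mathcal{A})$ need not be smooth. For such $F$, It\^o's formula produces additional boundary terms coming from excursions of $Z$ across the separatrix $\CL_\torus$, and one must prove that these terms vanish in the limit precisely because of the flux condition $\sum_i q_i D_i F(O) = 0$. I would localize in a mesoscopic layer $\{|H| \le \epsilon^\beta\}$ with $\alpha/2 < \beta < 1$, chosen small enough that the process behaves essentially diffusively within the layer (no full rotation completes) and large enough to contain all excursions back to $\CL_\torus$. A careful accounting, using that the probability for $Z$ to exit this layer into a given cell $U_i$ is, to leading order, proportional to $q_i/\sqrt{a_i}$ (this is the origin of the excursion weights in Remark~\ref{rmkConstruct}), combined with the harmonic extension of $F$ on the graph, shows that the boundary contribution equals a multiple of $\sum_i q_i D_i F(O)$ plus a vanishing error. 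Together with the preceding paragraphs, this completes the identification of the limit and hence the proof.
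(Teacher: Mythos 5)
Your overall architecture (tightness, plus identification of subsequential limits via the martingale problem for $(\mathcal A, D(\mathcal A))$, with uniqueness from the Hille--Yosida/core machinery) is exactly the paper's, and your averaging argument away from the vertex --- one rotation at a time, with $T_i(h)\sim C_i|\log h|$ cancelling the $\alpha|\log\epsilon|$ in the time change --- matches Lemmas~\ref{lstNew} and~\ref{nn3new}. The gap is in the step you yourself flag as the main obstacle, the flux condition at $O$, and it is twofold. First, a factual error: the probability that an excursion from the separatrix exits the mesoscopic layer into $U_i$ is, in the limit, proportional to $q_i$ (this is what Proposition~\ref{ppnCLTFirstHitShort} and the remark following it assert, and it is what the flux condition $\sum_i q_i D_iF(O)=0$ requires in order to kill the boundary term $\sum_i \P(\text{exit into }U_i)\, D_iF(O)$); the weight $q_i/\sqrt{a_i}$ you quote is the excursion-\emph{assignment} probability of Remark~\ref{rmkConstruct}, not the exit probability, and plugging it in would produce the wrong gluing condition.

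Second, and more seriously, your sketch treats the boundary contribution as if a single-excursion estimate, uniform in the starting point on $\CL_\torus$, sufficed. It does not: up to time $T$ there are of order $\eps^{-(\beta-\alpha/2)}$ excursions between $\CL_\torus$ and the layer $\CL(\eps^\beta)$ (this is the content of Lemma~\ref{le1} and \eqref{est12}), so each excursion's contribution to the martingale defect must be $o(\eps^{\beta-\alpha/2})$. The exit-direction probabilities do \emph{not} converge to $c\,q_i$ uniformly (or even pointwise, at the required rate) over starting points $x\in\CL_\torus$ --- where on the separatrix the excursion begins matters --- and the paper only obtains the refined estimate \eqref{est3a} for the \emph{invariant measure} $\nu$ of the discrete chain of successive returns to $\CL_\torus$. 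Converting that stationary estimate into one valid along the actual trajectory requires the uniform exponential mixing of that chain (Lemma~\ref{mixing}) to control $\sum_{n\le N}(F(Z_{\sigma_n})-\overline F)$. Your proposal contains no mechanism playing this role, and without it the error terms accumulated over the $\eps^{-(\beta-\alpha/2)}$ excursions are not under control. (A smaller point: uniformity in $\mu^\eps$ is not ``automatic'' because $Z$ depends on the actual starting point, not only on its $\Gammaeps$-image; it follows instead from the fact that the key estimate \eqref{eq:marting_problw} is proved as a supremum over $x\in\torus$.)
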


The proof of Theorem~\ref{thmAvgPrinShortTime} is given in Section~\ref{sxnShortTimeAveraging}.
As mentioned earlier, Theorem~\ref{thmAvgPrinShortTime} is not restricted to the cellular flow setting of this paper.
It describes the behaviour of a generic Hamiltonian system around heteroclinic connections, and serves as a direct analogue of the classical Freidlin-Wentzell averaging principle \cite{FreidlinWentzell12} at shorter time scales.

\subsection{The intermediate time behaviour on the plane}\label{sxnIntermediateR2}

Theorem~\ref{thmAvgPrinShortTime}, stated in the previous section, shows that a limiting behaviour of the projection $\Gammaeps(Z)$ is a diffusion on the rescaled Reeb graph~$\cG$.
In this section, we show that the limiting behaviour of $Z$ itself is an independent Brownian motion on the plane time changed by the local time of~$Y$ at~$O$.
We begin by recalling the abstract definition of the local time of~$Y$.


\begin{definition}\label{def_local_time}
The local time of $Y$ is the unique nonnegative random field
\[
  L \defeq \{L_t(y)\,:\, (t,y)\in \R_+\times \cG\}
\]
such that the following hold:
\begin{enumerate}[\hspace{2ex}(a)]
\item The mapping $(t,y)\to L_t(y)$ is jointly measurable, and $L_t(y)$ is adapted.
\item For every $y \in \cG$, the mapping $t\to L_t(y)$ is non-decreasing and constant on all open intervals for which $Y_t\neq y$.
\item For every bounded Borel measurable $f:\cG\to \R_+$ and every $y_0 \in \cG$, we have
\[
\int_0^t f(Y_s) a(Y_s)\,ds=2\int_\cG f(y)L_t(y)\,dy\qquad \text{$\Pb^{y_0}$-a.s.},
\]
where $a:\cG \to \R^+$ is defined%
\footnote{%
  Strictly speaking, $a(O)$ is not well defined.
  This, however, does not affect the left hand side since, with probability one, the process $Y$ spends time of measure zero at the interior vertex~$O$.
}
by $a(y) = a_i$ if $y \in I_i$.
\item For every $y_0 \in \cG$, $L_t(y)$ is $\Pb^{y_0}$-a.s.\ jointly continuous in $t$ and $y$ for $y\neq O$.
  Moreover, at $O$, we have
\[
  L_t(O)=\sum_{i=1}^M~ \lim_{\substack{y\to O,\\ y\in I_i}}L_t(y)\;.
\]
\end{enumerate}
\end{definition}
The existence and uniqueness of local time for diffusions on the real line is relatively well studied (see for instance~\cite{RogersWilliams00,RogersWilliams00a}).
These standard results, together with \cite[Lemma~2.2]{freidlin2000sheu},  give the existence and uniqueness of~$L$.
Moreover, in view of Remark~\ref{rmkConstruct}, the process $L_t(O)$ is a constant multiple of the local time of a one-dimensional Brownian motion $B$ at the origin.


With this definition we state our result concerning the limiting behaviour of $Z$.
To state our full convergence result, we introduce the state space
\begin{equ}
\Gb \defeq \R^2 \times \cG \;,\quad
\text{with metric }
d_\Gb((x,g), (\bar x, \bar g)) \defeq \abs{x - \bar x} + d_\cG(g,\bar g)\;.
\end{equ}
Our main probabilistic convergence result then reads as follows.

\begin{theorem}\label{thmMainR2}
  Define $\hge:\R^2 \to \Gb$ by
  \begin{equation*}
    \hge(x) =
    \paren[\big]{
      \epsilon^{(1 - \alpha)/4} x,
      (\Gammaeps \circ \pi)\paren{x}
    }\;,
  \end{equation*}
  and let $\nu_\epsilon$ be a family of probability measures on $\R^2$ such that the push forward measures $\hge^* \nu_\epsilon$ converge weakly to a probability measure  $\hat\nu$ on $\Gb$.
  Then, there exists a pair of processes $(\tilde W^Q, Y)$ defined on
  some probability space $(\bar \Omega, \bar{\mathcal F}, \Pb^{\hat\nu})$
  such that the following hold.
  \begin{enumerate}[\hspace{2ex}(a)]
    \item
      The initial distribution of $(\tilde W^Q, Y)$ under $\Pb^{\hat\nu}$ is $\hat\nu$.
    \item
      The process~$\tilde W^Q_\cdot - \tilde W^Q_0$ is a Brownian motion on $\R^2$ with strictly positive definite covariance matrix~$Q$.
    \item
      The process $Y$ is a diffusion on the graph $\cG$ with generator~$\mathcal A$.
      Moreover, conditioned on $(\tilde W^Q_0, Y_0)$, the processes $\tilde W^Q$ and~$Y$ are independent.
    \item
      As $\epsilon \to 0$, the law of the process $\hge(Z_\cdot)$ under~$\Pe^{\nu_\epsilon}$ converges weakly to that of $\Xi \defeq (\tilde{W}^Q_{L_\cdot}, Y)$ under~$\Pb^{\hat\nu}$, where $L_t = L_t(O)$ is the local time of $Y$ at the interior vertex $O$.
  \end{enumerate}
\end{theorem}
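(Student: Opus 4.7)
The plan is to extend the graph convergence of Theorem~\ref{thmAvgPrinShortTime} to joint convergence of the $\Gb$-valued process $\hat Z^\epsilon \defeq \hge(Z)$, whose second component already converges to~$Y$. The key additional input will be Proposition~\ref{ppnCLTFirstHitShort}, which quantifies the spatial displacement of $Z$ during a single excursion away from the separatrix; combining it with excursion theory and a random-sum CLT should produce the limit $\tilde W^Q_{L_\cdot}$ in the first coordinate.

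I would begin by establishing tightness of $\{\text{Law}(\hat Z^\epsilon)\}_\epsilon$ on $\CC(\R_+,\Gb)$. Tightness of the graph component is immediate from Theorem~\ref{thmAvgPrinShortTime}. For the spatial component one applies Proposition~\ref{ppnCLTFirstHitShort} on each excursion and sums the contributions, producing a Kolmogorov-type modulus-of-continuity bound that is uniform in $\epsilon$ and yields tightness.

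For identification of the limit, I would fix a small $\delta>0$ and inductively define
\begin{equation*}
  \tau_0^\epsilon = 0,\qquad \sigma_k^\epsilon = \inf\{t \ge \tau_k^\epsilon : d_\cG(\Gammaeps(Z_t), O) = \delta\},\qquad \tau_{k+1}^\epsilon = \inf\{t \ge \sigma_k^\epsilon : \Gammaeps(Z_t) = O\}.
\end{equation*}
Setting $\Delta_k^\epsilon \defeq \epsilon^{(1-\alpha)/4}(Z_{\tau_{k+1}^\epsilon} - Z_{\tau_k^\epsilon})$ and letting $N_t^\epsilon$ denote the number of completed cycles by time $t$, Theorem~\ref{thmAvgPrinShortTime} and the standard correspondence between $\delta$-excursion counts and local time at the origin for one-dimensional diffusions yield $\delta N_t^\epsilon \to L_t(O)$ in a suitable sense as $\epsilon \to 0$ and then $\delta \to 0$. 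Proposition~\ref{ppnCLTFirstHitShort} should provide the quantitative ingredient: up to a vanishing error the $\{\Delta_k^\epsilon\}_k$ are asymptotically independent and centred, with covariance that, after averaging over the cell-visit probabilities from Remark~\ref{rmkConstruct}, equals $\delta Q$, and which is asymptotically independent of the full graph excursion shape once the visited cell is fixed. A random-sum CLT then gives
\begin{equation*}
  \epsilon^{(1-\alpha)/4}(Z_t - Z_0) \;=\; \sum_{k=0}^{N_t^\epsilon - 1} \Delta_k^\epsilon + o(1) \;\xrightarrow[\epsilon \to 0]{\mathcal L}\; \tilde W^Q_{L_t(O)},
\end{equation*}
with $\tilde W^Q$ a Brownian motion of covariance $Q$ independent of $Y$; sending $\delta\to 0$ after $\epsilon\to 0$ removes the dependence on $\delta$, the small-excursion contribution being controlled by another application of Proposition~\ref{ppnCLTFirstHitShort}. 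Combining with the joint tightness and the uniqueness of the martingale problem for $(\tilde W^Q_{L_\cdot}, Y)$ on $\Gb$ upgrades finite-dimensional convergence to the full weak convergence in $\CC(\R_+,\Gb)$; properties (a)--(c) of the limit are read off from the construction, and (a) matches the first-marginal limit of $\hge^*\nu_\epsilon$.

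The main obstacle is Proposition~\ref{ppnCLTFirstHitShort} itself: one must verify that the per-excursion covariance is proportional to the \emph{same} matrix $Q$ independently of which cell is visited (after applying the correct edge weights), as this is exactly what forces $\tilde W^Q$ to be independent of the cell-visit sequence encoded in $Y$. The near-independence of successive excursion displacements also requires sharp mixing estimates for $Z$ between the $\delta$-neighbourhood of the separatrix and the cell interiors, carefully handling the logarithmic slow-down near the hyperbolic saddles — this is precisely the mechanism responsible for the $\abs{\log \epsilon}$ factor in the time rescaling~\eqref{eqnZDef}.
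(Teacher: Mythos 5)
Your proposal follows the same core strategy as the paper: decompose the trajectory into excursions between the separatrix and the level set $\Gammaeps^{-1}(\partial\cV^\delta)$, apply Proposition~\ref{ppnCLTFirstHitShort} to each excursion displacement, identify $\delta$ times the downcrossing count with the local time $L_t(O)$, and take $\epsilon\to0$ before $\delta\to0$. Your stopping times are exactly the $\mu_n^{\epsilon,\delta},\kappa_n^{\epsilon,\delta}$ of \eqref{eqnMuDef}, and the issue you flag as the ``main obstacle'' --- that the per-excursion covariance is the same $Q$ regardless of which cell is visited and is independent of the excursion shape --- is precisely what Proposition~\ref{ppnCLTFirstHitShort} delivers through the mutual independence of $\CN(0,Q)$, $\xi$ and $\zeta$; the paper proves this in Appendix~\ref{sxnCLTFirstHitShort} via an abstract Markov-chain lemma, so you have correctly located where the hard work sits.

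Where you diverge is the passage from excursion-level convergence to convergence in $\CC(\R_+,\Gb)$, and here your argument is under-specified. The paper packages the excursion data into a projection $\CP_\delta$ onto the excursion space, proves joint convergence of the whole sequence $(U_n,J_n)_{n\ge0}$ (Lemma~\ref{cor:indlemma}), and then uses an explicit reconstruction map $\Rde$ together with Lemmas~\ref{lmaGoodSet}--\ref{lmaGoodBound} to show $\Rde\CP_\delta\omega$ is uniformly close to $\omega$ with high probability; weak convergence follows by a triangle argument. You instead invoke a random-sum CLT at a single time $t$, tightness, and ``uniqueness of the martingale problem''. These are two finishing strategies conflated into one: tightness plus finite-dimensional convergence needs no martingale problem, while tightness plus identification of subsequential limits via the generator of $\Xi$ (Proposition~\ref{ppnFKY}) needs no fdd limits --- but your CLT as stated only yields the one-time marginal, so neither route is complete. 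Upgrading to joint laws at $t_1<\dots<t_m$ requires the asymptotic conditional independence, given the graph component, of the displacement increments over disjoint time intervals, which is exactly the excursion-level independence the paper proves in Lemma~\ref{cor:indlemma}; once you establish that, you have essentially reconstructed the paper's argument. Finally, your $o(1)$ error conceals two contributions the paper must (and does) control separately: the displacement accrued during the incomplete final excursion, handled by \eqref{extraeq}, and the discrepancy between the downcrossing counts of the prelimit process and of $Y$ caused by the time spent on upcrossings, handled by Lemma~\ref{lmaUpcrossingNegligible}. Both are available from your stated ingredients, so these are gaps of detail rather than of concept, but they are not automatic.
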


Since $L_t(O)$ is simply a constant multiple of Brownian local time,
$\tilde W^Q_L$ is a fractional kinetic process of index $1/2$.
This process arises naturally as the scaling limit of many trap models, such as continuous time random walks with heavy tailed jump times \cite{MeerSheff04,MeerScheff08} or the Bouchaud trap model \cite{Bouchaud,BenArousCerny07}. 
Intuitively, the scaling limit of the time of an excursion of $X_t$ away from the separatrix (when the process is trapped inside a cell) is approximately an excursion of a Brownian motion, and its length is accordingly heavy tailed with index $1/2$. 

If the support of $\hat\nu$ concentrates on $\R^2 \times \set{O}$,
then by Brownian scaling, $\Eb^{O} L_t=c\sqrt{t}$ for some constant $c(a_i,q_i)>0$.
In this case the variance of the limit process $\tilde W^Q_L$ is proportional to $\sqrt{t}$ for all time.
This was proved earlier in~\cite{IyerNovikov16} in the case $H(x_1,x_2)=\sin(x_1)\sin(x_2)$.
The proof of Theorem~\ref{thmMainR2} is presented in Section~\ref{sxnMainProof}.
Even though many ingredients in the proofs rely on the corresponding techniques from the companion paper~\cite{HairerKoralovPajorGyulai2014},
we keep the current paper self contained by sketching the main steps and highlighting the differences involved.

\section[PDE Application: Intermediate time homogenization]{A PDE Application: Intermediate time homogenization of the advection diffusion equation}\label{sxnPDEHomog}

This section is devoted to an intermediate time homogenization result for the advection diffusion equation.  We emphasize that the proof of the probabilistic result (Theorem \ref{thmMainR2}) does not rely on the arguments in this section.
The PDE~\eqref{eqnTildeTheta} is closely related to the process~$\tilde X$, and understanding the behaviour of~$\tilde X$ on intermediate time scales yields an intermediate time homogenization result for~\eqref{eqnTildeTheta} in a natural way.
Since~$\tilde X$ behaves like a fractional kinetic process on these time scales, it is natural to expect that~$\tilde \theta$ satisfies the (time) fractional heat equation~\eqref{eqnVarTheta}, and this was heuristically derived by Young~\cite{Young88}.
We prove it rigorously below (Theorem~\ref{thmPDEHomog}) using Theorem~\ref{thmMainR2}.

\begin{theorem}\label{thmPDEHomog}
  Let~$\tilde \theta^\epsilon$ satisfy the PDE~\eqref{eqnTildeTheta} for $x \in \R^2$, $t > 0$ with initial data~$\tilde \theta^\epsilon_0$.
  For~$\alpha \in (0, 1)$ define the rescaled functions $\theta^\epsilon$ and $\theta^\epsilon_0$ by~\eqref{eqnThetaEpsDef},
  and suppose $\theta^\epsilon_0 = \theta_0 \in \CC_b(\R^2)$ and is independent of $\epsilon$.
  Define the rescaled projection $\bge$ by $\bge(x) = \hge(x / \epsilon^{(1 - \alpha)/4})$.
  Then for any family of probability measures $\bar \nu^\epsilon$ on $\R^2$ such that $\bge^* \bar \nu^\epsilon$ converges weakly to a probability measure $\nu$ on $\Gb$, we have

  \begin{equation}\label{eqnLimThEp}
    \lim_{\epsilon \to 0} \int_{\R^2} \theta^\epsilon(x, t) \, d\bar \nu^\epsilon(x)
    = \int_{\R^2 \times \cG} \theta( x, y, t ) \, d\nu(x, y).
  \end{equation}
  Here $\theta$ is the unique classical  solution to the system
  \minilab{eqnThetaSys}
  \begin{equs}[2]
      \partial_t \theta - \mathcal A_y \theta &= 0
      \qquad
    & \text{for } y &\neq O,\, t > 0,
    \label{eqnThetaY}
    \\
      \frac{1}{2} \brak{Q:\grad^2_x} \theta
      + \sum_{i = 1}^M \bar q_i D^y_{i} \theta &= 0
      \qquad
    & \text{for } y &= O,\, t > 0,
    \label{eqnThetaX}
    \\
      \theta(x, y, 0) &= \theta_0(x). 
    \label{eqnThetaID}
  \end{equs}
  Here $\bar q_i = q_i / \sum_{j=1}^M q_j$, $D^y_i = D_i$ denotes the derivative along the $i^\text{th}$ edge of $\cG$,
  and $\mathcal A_y$ is the generator of the process $Y$ acting only on the variable~$y$.

  Moreover, if $\nu = \nu' \times \delta_O$, then
  \begin{equation*}
    \lim_{\epsilon \to 0} \int_{\R^2} \theta^\epsilon(x, t) \, d\bar \nu^\epsilon(x)
    = \int_{\R^2} \vartheta(x, t) \, d\nu'(x),
  \end{equation*}
  where $\vartheta(x, t) \defeq \theta(x, O, t)$ satisfies the Caputo time fractional equation 
  \begin{equation}\label{eqnThetaFPDE}
    \paren[\Big]{ \sum_{i=1}^M \frac{\bar q_i}{\sqrt{a_i / 2}} }
    \caputo{1/2} \vartheta
    - \frac{1}{2} \brak{Q : \grad^2} \vartheta = 0,
  \end{equation}
  with initial data $\theta_0$.
  Here $\caputo{1/2}$ denotes the \emph{Caputo derivative} of order $1/2$ defined by~\eqref{eqnCaputo12Def}.
\end{theorem}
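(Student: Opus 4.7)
The plan is to reduce Theorem~\ref{thmPDEHomog} to Theorem~\ref{thmMainR2} via Feynman-Kac and then identify the resulting expectation with the solution of \eqref{eqnThetaSys}. Since $v$ is smooth and bounded and $\theta_0 \in \CC_b(\R^2)$, the solution of \eqref{eqnTildeTheta} admits the representation $\tilde\theta^\epsilon(y, s) = \Ee^y \theta_0(\tilde X_s)$. Substituting the rescaling \eqref{eqnThetaEpsDef} together with \eqref{eqnZDef} yields
\[
  \theta^\epsilon(x, t) = \Ee^{x/\epsilon^{(1-\alpha)/4}} \theta_0\paren[\big]{\epsilon^{(1-\alpha)/4} Z_t}.
\]
Let $\tilde\nu^\epsilon$ be the push-forward of $\bar\nu^\epsilon$ under $x \mapsto x/\epsilon^{(1-\alpha)/4}$. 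A direct change of variables shows $\hge^*\tilde\nu^\epsilon = \bge^*\bar\nu^\epsilon \to \nu$ weakly, and since the first component of $\hge(Z_t)$ is exactly $\epsilon^{(1-\alpha)/4} Z_t$, applying Theorem~\ref{thmMainR2} together with the continuous mapping theorem for the evaluation at time $t$ gives
\[
  \lim_{\epsilon \to 0} \int_{\R^2} \theta^\epsilon(x, t)\, d\bar\nu^\epsilon(x)
  = \Eb^{\nu}\brak*{\theta_0(\tilde W^Q_{L_t})}.
\]

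Setting $\theta(x, y, t) \defeq \Eb^{(x, y)}\brak*{\theta_0(\tilde W^Q_{L_t})}$ then establishes \eqref{eqnLimThEp} by Fubini, and the remaining task is to identify $\theta$ with the unique classical solution of \eqref{eqnThetaSys}. By Definition~\ref{def_local_time}(b), $L$ increases only on $\{Y = O\}$; hence away from $O$ the $x$-marginal of $(\tilde W^Q_L, Y)$ is frozen, and \eqref{eqnThetaY} reduces to the standard Kolmogorov backward equation for $Y$. The flux condition \eqref{eqnThetaX} at the interior vertex is read off from the Itô-Tanaka expansion
\[
  df(\tilde W^Q_{L_t}, Y_t) = (\mathcal{A}_y f)\, dt + \grad_x f \cdot d\tilde W^Q_{L_t}
  + \tfrac{1}{2}\brak*{Q:\grad_x^2}f\, dL_t + (\text{local martingale}),
\]
valid for $f$ smooth in $x$ and in the domain of $\mathcal{A}$ in $y$. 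Since the $dL_t$ integrator is supported on $\{Y = O\}$, demanding that $\theta$ belong to the domain of the joint generator forces a balance at $O$ between the edge fluxes $\sum_i q_i D^y_i \theta$ and the spatial Laplacian $\tfrac{1}{2}\brak*{Q:\grad_x^2}\theta$. The precise weights $\bar q_i$ are then extracted from the excursion construction of Remark~\ref{rmkConstruct}, in which each excursion of $Y$ is assigned to edge $i$ with probability proportional to $q_i/\sqrt{a_i}$.

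For the final statement, set $\vartheta(x, t) \defeq \theta(x, O, t)$. When $\nu = \nu' \times \delta_O$, Remark~\ref{rmkConstruct} identifies $L_\cdot(O)$ (started at $O$) as a constant multiple of the local time at $0$ of a standard Brownian motion $B$, and a direct computation using the excursion weights gives this constant as $r_0 = \sum_i \bar q_i/\sqrt{a_i/2}$. By L\'evy's identity the local time of $B$ at $0$ is the inverse of a $1/2$-stable subordinator, so $\tilde W^Q_{L_\cdot}$ is (up to scaling) a fractional kinetic process of index $1/2$, whose one-dimensional densities satisfy \eqref{eqnThetaFPDE}. This last step can be made rigorous by taking the Laplace transform in $t$ and combining the identity $\widehat{\caputo{1/2}\vartheta}(r) = r^{1/2}\hat\vartheta(r) - r^{-1/2}\vartheta(0)$ with the known Laplace transform of Brownian local time.

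The main obstacle is the rigorous justification of the gluing condition \eqref{eqnThetaX}: one must establish enough regularity of $\theta$ near $O$ to apply Itô-Tanaka, and correctly identify the edge weights $\bar q_i$ from the excursion construction. A cleaner alternative is to characterise $\theta$ through its Laplace transform in $t$ and derive \eqref{eqnThetaX} directly as a resolvent identity for the joint generator of $(\tilde W^Q_L, Y)$, thereby avoiding a direct stochastic-analytic treatment of the singular vertex.
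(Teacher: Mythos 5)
Your reduction to Theorem~\ref{thmMainR2} via Feynman--Kac, and the definition $\theta(x,y,t)=\Eb^{(x,y)}\theta_0(\tilde W^Q_{L_t})$, is exactly the paper's argument, and your identification of \eqref{eqnThetaY} away from $O$ is fine. The one genuine gap is in the derivation of the gluing condition \eqref{eqnThetaX}. The displayed It\^o--Tanaka expansion omits the vertex drift term $\sum_{i=1}^M \bar q_i D_i^y f(\Xi_s)\,dL_s$, and this term is the entire content of \eqref{eqnThetaX}: as you state the formula (for $f$ ``in the domain of $\mathcal A$ in $y$'') the term vanishes by the flux condition defining $D(\mathcal A)$, but then the formula does not apply to $\theta(\cdot,t)$, which for $t>0$ satisfies \eqref{eqnThetaX} rather than $\sum_i q_i D_i\theta(x,O,t)=0$ — so the argument is circular as written. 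The paper closes this by invoking the Freidlin--Sheu It\^o formula \eqref{eq:Ito_in_Freidlin-Sheu}, valid for all $f\in\CC^2_b(\cG)$, which already carries the term $\sum_i \bar q_i D_i f(O)\,L_t$ with the correct weights; one then reads \eqref{eqnThetaX} off as the compatibility condition for $\theta(\cdot,t)$ to lie in the domain of the joint generator. Your proposal to extract the weights from the excursion construction of Remark~\ref{rmkConstruct} (using $\P(i_k=j)\propto q_j/\sqrt{a_j}$ together with the scaling $Y_t=(j,\sqrt{a_j}|B_t|)$, whose Tanaka terms recombine into $\propto\sum_j q_j D_j$) can be made to work, but it amounts to reproving Freidlin--Sheu and is left entirely as a sketch at the point where the proof actually lives.

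For the final Caputo statement your route differs from the paper's. The paper derives \eqref{eqnThetaFPDE} deterministically from the system \eqref{eqnThetaSys}: on each edge \eqref{eqnThetaY} is the half-line heat equation with the Neumann data \eqref{eqnThetaIBC}, whose explicit solution formula expresses $\theta(x,O,t)$ as a $(t-s)^{-1/2}$ convolution; multiplying by $\bar q_i/\sqrt{a_i/2}$, summing, and applying $\caputo{1/2}$ gives \eqref{eqnThetaFPDE}. You instead identify $L$ as an inverse $1/2$-stable subordinator and invoke the governing-equation theory for fractional kinetic processes via Laplace transforms. That is a viable alternative, but it carries its own burden: you must pin down the normalisation constant $r_0=\sum_i\bar q_i/\sqrt{a_i/2}$ relating $L$ to the standard inverse subordinator, and this computation again passes through the same excursion weights you left unverified above. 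The paper's route gets the constant for free from \eqref{eqnThetaSys}.
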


\begin{remark*} 
Even though this is a purely deterministic result, our proof is probabilistic and relies on Theorem~\ref{thmMainR2}.
In lieu of additionally presenting a direct PDE proof, we provide in Appendix~\ref{sxnAsymptoticExpansion} a formal asymptotic expansion motivating~\eqref{eqnThetaSys}.
\end{remark*}

Well-posedness and regularity of 
solutions to~\eqref{eqnThetaSys} is standard. Parabolic problems similar to~\eqref{eqnThetaFPDE}  and the regularity of their solutions are discussed in, e.g.~\cite{Allen2015}. 
The notion of convergence used in equation~\eqref{eqnLimThEp} is known as two-scale convergence, and was introduced by Nguetseng~\cite{Nguetseng89}.
It has proved to be an invaluable tool in the theory of homogenization and has been applied various contexts.
In most situations, however, the underlying small-scale manifold is the torus.
The key difference in~\eqref{eqnLimThEp} is that the underlying small scale naturally arises as the rescaled Reeb graph of the Hamiltonian.

Before proving Theorem~\ref{thmPDEHomog}, we momentarily pause to consider an illustrative special case.
For $x\in \R^2$, take the sequence of measures $\bar \nu^\epsilon$ defined by
\begin{equation*}
  \bar{\nu}^\epsilon = \delta\paren[\Big]{ \epsilon^{(1 - \alpha)/4} \floor[\Big]{ \frac{x}{\epsilon^{(1 - \alpha)/4}} } },
\end{equation*}
where $\delta(z)$ denotes the delta measure supported at the point $z \in \R^2$, and we assume that the origin belongs to the separatrix (and therefore so does every point with integer coordinates).
Applying Theorem~\ref{thmPDEHomog} now shows that
\begin{equation*}
  \theta^\epsilon\paren[\Big]{ \epsilon^{(1 - \alpha)/4} \floor[\Big]{ \frac{x}{\epsilon^{(1 - \alpha)/4}} }, t }
    \xrightarrow{\epsilon \to 0} \vartheta(x, t),
\end{equation*}
for all $x \in \R^2$ and $t > 0$.
That is, at time~$t$ the value of the temperature~$\theta^\epsilon$ at the corner of the domain of periodicity containing~$x$ converges to $\vartheta(x, t)$.

At first sight, this is extremely surprising.
Long time scaling limits of~\eqref{eqnTildeTheta} have been studied extensively, and the limiting behaviour is simply the heat equation with an enhanced diffusion coefficient.
In our situation, the ``intermediate time'' scaling limit of~\eqref{eqnTildeTheta} is a \emph{time fractional} heat equation~\eqref{eqnThetaFPDE}!

The heuristic explanation of this is as follows.
On time scales shorter than $1/\epsilon$, any heat trapped in the interior of one cell will not escape the cell.
Thus to observe a non-trivial limiting behaviour, at these time scales one needs to zoom in close to the separatrix.
Indeed, if the family of measures $\hge^* \hat \nu^\epsilon$ converge weakly to a probability measure (as required in Theorem~\ref{thmMainR2}), the supports of $\hat\nu^\epsilon$  must asymptotically concentrate on the separatrix.

Now, in a small neighbourhood of the separatrix, there are two effects at play: heat diffuses to neighbouring cells, and heat is ``trapped'' in the cell interior.
Thus the limiting behaviour should be a coupled system balancing these two effects.
This is precisely what~\eqref{eqnThetaY} and~\eqref{eqnThetaX} accomplish.
Many similar models for anomalous diffusion have been studied by various authors. For example, Young~\cite{Young88} (see also~\cite{YoungJones91}) heuristically derived a similar system in the context of cellular flows.
Theorem~\ref{thmPDEHomog} establishes this rigorously, and we prove it below.

\begin{proof}[Proof of Theorem~\ref{thmPDEHomog}]
  By the Feynman-Kac formula
  \begin{equation*}
    \theta^\epsilon(x, t)
    = \tilde \theta^\epsilon \paren[\Big]{\frac{x}{\epsilon^{(1 - \alpha) / 4}}, \frac{\alpha \abs{\log \epsilon} t}{\epsilon^{1 - \alpha}} }
    = \Ee^{x / \epsilon^{(1 - \alpha)/4}}
    \theta_0( \epsilon^{(1 - \alpha)/4} Z_t ).
  \end{equation*}
  Hence, by Theorem~\ref{thmMainR2},
  \begin{equation}\label{eqnLimTh1}
    \int_{\R^2} \theta^\epsilon(x, t) \, d\bar \nu^\epsilon(x)
    = \Ee^{\hat \nu^\epsilon}
    \theta_0 \paren[\Big]{ \epsilon^{(1 - \alpha) / 4} Z_t }
    \xrightarrow{\epsilon \to 0}
    \Eb^{\nu} \theta_0( \tilde W^Q_{L(t)} ),
  \end{equation}
  where $\hat \nu^\epsilon$ is the rescaled measure defined by
  \begin{equation*}
    d\hat \nu^{\epsilon}(x) = d\bar \nu^{\epsilon}\paren[\big]{\epsilon^{(1 - \alpha)/4} x},
  \end{equation*}
  and $\tilde W^Q$ and $L$ are as in Theorem~\ref{thmMainR2}.
  Define the function $\theta\colon\R^2 \times \cG \times \R_+ \to \R$ by
  \begin{equation}\label{eqnThetaDef}
    \theta( x, y, t)
    \defeq \Eb^{(x,y)} \theta_0( \tilde W^Q_{L(t)} ),
  \end{equation}
  where we recall $\Eb^{(x,y)}$ is the expectation operator with respect to the probability measure $\Pb^{(x,y)}$ under which $\Pb^{(x,y)} ( \tilde W^Q_0 = x ~~\&~~ Y_0 = y ) = 1$.
  Now,
  \begin{equation*}
    \Eb^{\nu} \theta_0( \tilde W^Q_{L(t)} )
      = \int_{\R^2 \times \cG} \theta(x, y, t) \, d\nu(x,y)
  \end{equation*}
  and hence~\eqref{eqnLimThEp} follows from~\eqref{eqnLimTh1}.

  The fact that $\theta$ satisfies the system~\eqref{eqnThetaSys}
   follows from~\eqref{eqnThetaDef} and an It\^o formula for~$Y$ that was proved in~\cite{freidlin2000sheu}.
  Since this is interesting in its own right, we single it out as a proposition (Proposition~\ref{ppnFKY}, below) and defer it to the end of this section.

  Finally, to prove~\eqref{eqnThetaFPDE} when $\nu = \nu' \times \delta_O$, we only need to show that given a solution to~\eqref{eqnThetaSys}, the function~$\vartheta(x, t) \defeq \theta(x, O, t)$ satisfies~\eqref{eqnThetaFPDE}.
  This follows from the explicit solution formula for the heat equation on the half line, and similar results are readily available in the literature (see for instance~\cite[\S4.5]{MeerschaertSikorskii12}).
  For convenience, we derive it below.

  Along the $i^\text{th}$ edge, equation~\eqref{eqnThetaY} is simply the one dimensional heat equation.
  Rearranging~\eqref{eqnThetaX}, we obtain the boundary condition
  \begin{equation}\label{eqnThetaIBC}
    D^y_i \theta(x, O, t) = -\frac{1}{\bar q_i} \paren[\Big]{
      \frac{1}{2} \brak{Q : \grad_x^2} \theta(x, O, t)
	+ \sum_{j \neq i}^M \bar q_j D^y_{j} \theta(x, O, t)
    }.
  \end{equation}
  Treating the right hand side of~\eqref{eqnThetaIBC} as a given function, we can explicitly solve~\eqref{eqnThetaY} on the $i^\text{th}$ edge, with boundary condition~\eqref{eqnThetaIBC} and constant (in $y$) initial data $\theta_0(x)$.
  This gives
  \begin{equation*}
    \theta(x, O, t)
      = \theta_0(x)
	+ \frac{1}{\bar q_i} \paren[\Big]{ \frac{a_i}{2 \pi} }^{1/2}
	  \int_0^t 
	    \paren[\Big]{
	      \frac{1}{2} \brak{Q: \grad_x^2} \theta(x, O, s)
		+ \sum_{j \neq i}^M \bar q_j D^y_{j} \theta(x, O, s)
	    }
	    \frac{ds}{\sqrt{t - s}},
  \end{equation*}
  for every $i \in \set{1, \dots, M}$.
  Multiplying both sides by $\bar q_i / \sqrt{a_i/2}$, summing over $i$ and using~\eqref{eqnThetaX} yields
  \begin{equation*}
    \sum_{i =1}^M \frac{\bar q_i}{\sqrt{a_i / 2}}
      \paren[\Big]{ \theta(x, O, t) - \theta_0(x) }
      = \frac{1}{2 \sqrt{\pi}}
	  \int_0^t \brak{ Q : \grad_x^2} \theta(x, O, s)
	    \frac{ds}{\sqrt{t - s}}.
  \end{equation*}
  Applying $\caputo{1/2}$ to both sides and using $\vartheta(x, t) = \theta(x, O, t)$ yields~\eqref{eqnThetaFPDE} as desired.
\end{proof}

In the above proof we used the fact that~$\theta$ defined by~\eqref{eqnThetaDef} satisfies the system~\eqref{eqnThetaSys}.
We state and prove this next (see also~\cite{PajorGyulaiSalins15} for a related result).

\begin{proposition}\label{ppnFKY}
  Let $\theta_0 \in \CC_b(\Gb)$, and define $\theta$ by~\eqref{eqnThetaDef}.
  Then~$\theta$ satisfies the system~\eqref{eqnThetaSys} for $t>0$, and is continuous at $t = 0$.
\end{proposition}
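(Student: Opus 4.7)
The plan is to view $\theta$ as the backward Kolmogorov function of the joint Markov process $M_\cdot := (\tilde W^Q_{L_\cdot}, Y)$ and identify its infinitesimal behaviour via the It\^o formula for graph-valued diffusions of~\cite{freidlin2000sheu}. Continuity at $t=0$ is immediate: under $\Pb^{(x,y)}$ we have $L_0=0$, hence $\tilde W^Q_{L_0}=x$, and since paths of $L$ and $\tilde W^Q$ are continuous and $\theta_0\in \CC_b(\R^2)$, dominated convergence yields $\theta(x,y,t)\to \theta_0(x)$ as $t\to 0$.

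For the required regularity and for a useful alternative representation, I would use the conditional independence of $\tilde W^Q$ and $Y$ (part (c) of Theorem~\ref{thmMainR2}) to write
\[
\theta(x,y,t) = \Eb^{y}\bigl[(P^Q_{L_t}\theta_0)(x)\bigr]\;,
\]
where $(P^Q_s)_{s\ge 0}$ is the heat semigroup on $\R^2$ generated by $\mathcal L_x := \tfrac{1}{2}[Q:\grad^2_x]$. This representation transfers the $\CC^\infty$-smoothness of $P^Q_\ell\theta_0$ in $x$ directly to $\theta$, while smoothness in $y$ on each edge (jointly with $t$) follows from the one-dimensional parabolic nature of the problem on each edge of $\cG$ together with continuity of $\theta$ at the vertex, established via the pathwise construction of $Y$ in Remark~\ref{rmkConstruct}.

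To derive the system~\eqref{eqnThetaSys}, fix $T>0$ and observe that the Markov property of $M$ gives
$N_t := \theta(\tilde W^Q_{L_t}, Y_t, T-t) = \Eb^{(x,y)}[\theta_0(\tilde W^Q_{L_T}) \mid \mathcal F_t]$,
so $N$ is a $\Pb^{(x,y)}$-martingale on $[0,T]$. Applying the It\^o formula of~\cite{freidlin2000sheu} to $N$, the contribution from intervals during which $Y_t$ is in the interior of some edge $I_i$ (where $L$ is constant, so $\tilde W^Q_{L_t}$ is frozen) is
\[
\int_0^t \bigl(-\partial_\tau + \mathcal A_y\bigr)\theta(\tilde W^Q_{L_s}, Y_s, T-s)\,ds\;,
\]
while excursions through the vertex contribute
\[
\int_0^t \mathcal L_x\theta(\tilde W^Q_{L_s}, O, T-s)\,dL_s + \sum_{i=1}^M\kappa_i\int_0^t D^y_i\theta(\tilde W^Q_{L_s}, O, T-s)\,dL_s\;,
\]
where $\kappa_i$ are the vertex weights produced by the Freidlin--Sheu formula. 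Forcing both bounded-variation integrands to vanish yields~\eqref{eqnThetaY} on each edge and a flux condition at $O$ of the form $\mathcal L_x\theta + \sum_i \kappa_i D^y_i\theta = 0$.

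The main obstacle is the vertex bookkeeping: one must verify that the constants $\kappa_i$, once combined with the normalization of the local time $L$ prescribed by Definition~\ref{def_local_time}(c) (which encodes the edge diffusivities $a_i$), reproduce exactly the weights $\bar q_i = q_i/\sum_j q_j$ in~\eqref{eqnThetaX}. This is a book-keeping computation that traces through the definitions of $a_i$, $q_i$ and the local-time normalization, and can be cross-checked against the alternative construction of $Y$ in Remark~\ref{rmkConstruct} where the probabilities $\P(i_k=j)\propto q_j/\sqrt{a_j}$ make the coefficients explicit. Once this matching is established, standard uniqueness for the parabolic boundary-value problem~\eqref{eqnThetaSys} with bounded continuous initial datum identifies $\theta$ with the stated solution.
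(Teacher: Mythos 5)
Your proposal is correct and follows essentially the same route as the paper: both rest on the Freidlin--Sheu It\^o formula for $Y$, the conditional independence of $\tilde W^Q$ and $Y$ (so that $d\qv{\tilde W^{Q,i}_L,\tilde W^{Q,j}_L}_t = Q_{ij}\,dL_t$), and the identification of the edge generator together with a vertex flux condition --- the paper organises this as a direct computation of the generator $\mathcal A_\Xi$ via $\lim_{t\to 0}t^{-1}\Eb^{(x,y)}(f(\Xi_t)-f(\Xi_0))$ rather than via your martingale $N_t=\theta(\tilde W^Q_{L_t},Y_t,T-t)$, but these are the same computation packaged differently. The ``vertex bookkeeping'' you flag as the main obstacle is not actually open: the Freidlin--Sheu formula as quoted in~\eqref{eq:Ito_in_Freidlin-Sheu} already carries the vertex weights $\bar q_i=q_i/\sum_j q_j$ in the local-time term (with $L$ normalised as in Definition~\ref{def_local_time}), so your $\kappa_i$ are $\bar q_i$ by the cited result and no further matching is required.
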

\begin{proof}
  The first step is to obtain an It\^o formula for the process~$\Xi \defeq ( \tilde W^Q_L, Y )$.
  For the process~$Y$ alone, an It\^o formula is known and can be found in Freidlin and Sheu~\cite{freidlin2000sheu}.
  Explicitly, there exists%
  \footnote{
    The Brownian motion~$B$ can be directly obtained from the construction outlined in Remark~\ref{rmkConstruct}.
    Indeed, if~$\tilde B$ denotes the Brownian motion in Remark~\ref{rmkConstruct}, then we have $dB = \sign(\tilde B) \, d\tilde B$.
  }
  a Brownian motion~$B$ such that
  \begin{equation}\label{eq:Ito_in_Freidlin-Sheu}
    f(Y_t) - f(Y_0) = 
	\sum_{i = 1}^M \int_0^t
	    D_i f(Y_s) \sigma_i(Y_s) \, dB_s
	+ \int_0^t \mathcal A_y f(Y_s) \, ds
	+ \sum_{i=1}^M \bar q_i D_i f(O) \, L_t,
  \end{equation}
  holds any $f \in \CC^2_b(\cG)$.
  Here $\sigma_i(y) = \sqrt{a_i}$ if $y \in I_i$ and $\sigma_i(y) = 0$ otherwise.

  Now, since $\tilde W^Q$ and $Y$ are independent, the time changed process $\tilde W^Q_L$ is a martingale with joint quadratic variations given by
  \begin{equation*}
    d\qv{\tilde W^{Q,i}_L, \tilde W^{Q,j}_L}_t = Q_{i,j} dL_t
    \qquad\text{and}\qquad
    d\qv{\tilde W^{Q,i}_L, B}_t = 0
  \end{equation*}
  for all $i,j \in \set{1, 2}$.
  Here $\tilde W^{Q,i}$ denotes the $i^\text{th}$ component of $\tilde W^Q$, and~$Q_{i,j}$ is the $i$-$j^\text{th}$ entry of the matrix~$Q$.
  For $f \in \CC^2_b(\R^2 \times \cG)$, we thus obtain by It\^o's formula
  \begin{multline}\label{eqnXiIto}
    f(\Xi_t) - f(\Xi_0) = 
	\sum_{i = 1}^M \int_0^t
	    D_i^y f(\Xi_s) \sigma_i(Y_s) dB_s
	+ \int_0^t \mathcal A_y f(\Xi_s) \, ds
	\\
        + \sum_{i=1}^2 \int_0^t
	    \partial_{x_i} f(\Xi_s)
	    \, d \tilde W^{Q,i}_{L_s}
	+ \int_0^t \paren[\Big]{
	    \frac{1}{2} \brak{Q : \grad_x^2} 
	    + \sum_{i=1}^M \bar q_i D_i^y 
	  } f(\Xi_s) \, dL_s.
  \end{multline}

  Now we use the It\^o formula to compute~$\mathcal A_\Xi$, the generator of $\Xi$.
  Indeed, for~$f \in \CC^2_b(\R^2 \times \cG)$ we have
  \begin{align}
    \nonumber
    \mathcal A_\Xi f(x,y)
      &= \lim_{t \to 0} \frac{1}{t} \Eb^{(x, y)} \paren{ f(\Xi_t) - f(\Xi_0) }
    \\
    \label{eqnAxi1}
      &= \lim_{t \to 0} \frac{1}{t} \Eb^{(x,y)} \paren[\Big]{
	  \int_0^t \mathcal A_y f(\Xi_s) \, ds
	  + \int_0^t \paren[\Big]{
	      \frac{1}{2} \brak{Q : \grad_x^2} 
	      + \sum_{i=1}^M \bar q_i D_i^y 
	    } f(\Xi_s) \, dL_s
	},
  \end{align}
  since the other two terms on the right of~\eqref{eqnXiIto} are martingales and have expectation~$0$.
  Now, as $t \to 0$, the first term on the right of~\eqref{eqnAxi1} converges to~$\mathcal A_y f(x, y)$.
  For the second term on the right of~\eqref{eqnAxi1}, the fact that~$L$ is a constant multiple of Brownian local time gives
  \begin{equation*}
    \Eb^{(x,y)}
    \int_0^t \paren[\Big]{
      \frac{1}{2} \brak{Q : \grad_x^2} 
      + \sum_{i=1}^M \bar q_i D_i^y 
    } f(\Xi_s) \, dL_s
    = \begin{dcases}
	o(t) & y \neq O,\\
	\CO(\sqrt{t})
	\paren[\Big]{
	  \frac{1}{2} \brak{Q : \grad_x^2} 
	  + \sum_{i=1}^M \bar q_i D_i^y 
	} f(x, O)
	  & y = O.
      \end{dcases}
  \end{equation*}
  After dividing by~$t$ and taking the limit as $t \to 0$, this vanishes without any further restriction on $f$ if $y \neq O$.
  For $y = O$, this limit only exists provided that the compatibility condition
  \begin{equation}\label{eqnXiCompat}
    \frac{1}{2} \brak{Q : \grad_x^2}f(x, O)
      + \sum_{i=1}^M \bar q_i D_i^y f(x, O) = 0
  \end{equation}
  holds.
  This shows that if $f \in \CC^2_b(\Gb) \cap D(\mathcal A_\Xi)$, then for every $x \in \R^2$ we must have $f( x, \cdot ) \in D(\mathcal A_y)$, $\mathcal A_\Xi f(x,y) = \mathcal A_y f(x,y)$ for every $y\in \cG\setminus O$, and the compatibility condition~\eqref{eqnXiCompat} must be satisfied as well.

  From this, equation~\eqref{eqnThetaSys} follows from standard techniques.
  Indeed, for~$\theta$ defined by~\eqref{eqnThetaDef}, standard results imply that~$\theta$ is continuous at $t = 0$ and satisfies the Kolmogorov equation $\partial_t \theta - \mathcal A_\Xi \theta = 0$ giving~\eqref{eqnThetaY} and~\eqref{eqnThetaID}.
  Moreover, for positive time we must have~$\theta(\cdot, t) \in \CC^2_b(\Gb) \cap D(\mathcal A_\Xi)$, and~\eqref{eqnXiCompat} gives the flux balance condition~\eqref{eqnThetaX} as desired. 
\end{proof}

\section{Proof of Theorem~\ref{thmMainR2}}\label{sxnMainProof}

We devote this section to proving Theorem~\ref{thmMainR2}.
Our proof resembles the proof in~\cite{HairerKoralovPajorGyulai2014}, where a similar result appeared.
The main difference in our situation is that we rely on Theorem~\ref{thmAvgPrinShortTime} instead of the classical averaging principle.
Our first task is to describe how far $Z(t)$ can travel inside a small neighbourhood of the separatrix.
Given $\delta > 0$, define $\cV^{\delta} \subset \cG$ by
\begin{equation*}
  \cV^{\delta} \defeq \set{(i,y)\in \cG : |y| \leq \delta}\;,
\end{equation*}
and introduce two sequences of stopping times~$\mu^{\epsilon,\delta}_n$ and~$\kappa^{\epsilon,\delta}_n$ corresponding to successive visits to $O$ and $\partial \cV^{\delta}$.
Namely, let $\mu_0^{\epsilon,\delta} = \kappa_{-1}^{\epsilon,\delta}=0$
and then define recursively 
\begin{equation}\label{eqnMuDef}
  \mu_n^{\epsilon,\delta}  = \inf \{t \geq \kappa_{n-1}^{\epsilon,\delta}\,:\, \Gammaeps(Z_t) \in \partial \cV^{\delta} \}
  \qquad\text{and}\qquad
  \kappa_n^{\epsilon,\delta}  = \inf \{t \geq \mu_{n}^{\epsilon,\delta}\,:\, \Gammaeps(Z_t) \in O \}\;,
\end{equation}
for $n\geq 1$ and $n\geq 0$ respectively.
Let $\DeltaZ_n  = Z(\kappa^{\epsilon,\delta}_n) - Z(\kappa^{\epsilon,\delta}_{n-1})$,
be the displacement between successive visits to $\CL$.
With this notation, the distance covered by $Z(t)$ before  hitting 
$\Gamma_{\e}^{-1}(\partial \cV^{\delta})$, as well as 
the cell it then hits can be described as follows.

\begin{proposition}\label{ppnCLTFirstHitShort}
There exists a $2\times 2$ non-degenerate matrix $Q$ and a vector $(p_1,\ldots,p_M)$ such that
the distributions of $
  \paren{
    \eps^{\frac{1-\alpha}{4}} \DeltaZ_1, \Gammaeps(Z(\mu_1^{\epsilon,\delta}))
  }
$
under $\Pe^x$ converge, as $\epsilon \to 0$, to the distribution of 
$\paren{ \sqrt{\delta\xi} \CN(0,Q), \zeta }$,
uniformly for $x \in \CL$.
Here, $\xi$, $\zeta$, and $\mathcal N(0, Q)$ are three independent random variables such that
  $\xi$ is exponentially distributed with parameter one, 
  $\CN(0,Q)$ is a two-dimensional normally distributed random variable with mean $0$ and covariance matrix $Q$,
  and $\zeta$ is a $\cG$-valued random variable that is almost surely at distance $\delta$ from $O$ and $\P(\zeta \in I_i) = p_i$.

Moreover, for each $\eta > 0$ there is $\delta_0 > 0$ such that
\begin{equation} \label{extraeq}
\lim_{\eps \downarrow 0} \sup_{x \in \fd} \Pe^{x} \Big(\eps^{\frac{1-\alpha}{4}} \sup_{0 \leq t \leq \kappa^{\epsilon,\delta}_1 } |Z_{t}| > \eta\Big) < \eta,
\end{equation}
whenever $0 < \delta \leq \delta_0$.
\end{proposition}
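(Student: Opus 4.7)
The plan is to deduce Proposition~\ref{ppnCLTFirstHitShort} from Theorem~\ref{thmAvgPrinShortTime} together with an excursion-theoretic analysis of the accumulated displacement. By Theorem~\ref{thmAvgPrinShortTime}, the law of $\Gammaeps(Z)$ under $\Pe^x$ converges, uniformly in $x\in\CL$, to the law of the graph diffusion $Y$ started at $O$. Invoking the Skorokhod representation we may assume this convergence is almost sure, in which case the stopping times $\mu_1^{\epsilon,\delta}$ and $\kappa_1^{\epsilon,\delta}$ converge to the analogous hitting times for $Y$, and $\Gammaeps(Z(\mu_1^{\epsilon,\delta}))$ converges to the first visit of $Y$ to $\partial\cV^\delta$. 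A direct computation from the Brownian construction of $Y$ in Remark~\ref{rmkConstruct} identifies the distribution of this limit as the claimed $\zeta$, with $\P(\zeta\in I_i)=p_i$ proportional to $q_i$.

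The exponential factor $\xi$ arises through It\^o's excursion theory applied to $Y$. Before the first excursion of $Y$ whose range reaches $\partial\cV^\delta$, the process performs a Poisson collection of unsuccessful excursions from $O$; by the structure of the It\^o excursion measure on the graph, the cumulative local time of $Y$ at $O$ up to the arrival of the first successful excursion is equal in law to $c\delta\xi$ for some constant $c>0$ with $\xi\sim\mathrm{Exp}(1)$, and this local time is independent of the identity $\zeta$ of the edge supporting that successful excursion. For the displacement $\Delta Z_1^{\eps}$ itself, we decompose it into contributions from the individual mini-excursions of $Z$ from $\CL$ back to $\CL$. Each such contribution has vanishing conditional mean, by averaging over one period of the flow, and their conditional covariances, summed over the random number of mini-excursions occurring in $[0,\kappa_1^{\eps,\delta}]$, are asymptotic to a fixed matrix times the cumulative local time of $Y$ at $O$. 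A martingale central limit theorem, applied to the filtration generated by the successive returns of $Z$ to $\CL$, then shows that $\eps^{(1-\alpha)/4}\Delta Z_1^{\eps}$ converges, conditionally on the graph trajectory, to a centered Gaussian with covariance proportional to $\delta\xi$, independent of $\zeta$. The constant of proportionality is absorbed into the matrix $Q$, which is thereby identified as the matrix appearing in Theorem~\ref{thmMainR2} and pinned down via the matching argument~\eqref{Qmatrix} used in~\cite{HairerKoralovPajorGyulai2014}.

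The sup-norm estimate~\eqref{extraeq} follows from Doob's maximal inequality applied to the same martingale decomposition, whose expected quadratic variation up to $\kappa_1^{\eps,\delta}$, after the rescaling by $\eps^{(1-\alpha)/4}$, is of order $\delta$; the supremum is therefore made arbitrarily small uniformly in $x$ by choosing $\delta_0$ small. The main obstacle will be the treatment of the displacement acquired while $Z$ passes through $\eps^{1/2}$-sized neighborhoods of the saddle points of $H$: the logarithmic slow-down of the deterministic flow there creates long tangential correlations that prevent the per-rotation increments from being exactly independent. Controlling this requires uniform exit-time estimates near the hyperbolic saddles, analogous to those used in the proof of Theorem~\ref{thmAvgPrinShortTime}, together with an application of the strong Markov property at the entry into each saddle neighborhood to recover uniformity in $x\in\CL$.
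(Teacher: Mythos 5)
Your description of the limiting object is right, but the argument you give for the displacement part has a genuine gap. You assert that each mini-excursion of $Z$ from $\CL$ back to $\CL$ "has vanishing conditional mean, by averaging over one period of the flow," and then invoke a martingale CLT. This is not true: the conditional expectation of a single excursion's displacement, given its starting point on the separatrix, is not zero — only the average of the displacement against the invariant measure of the chain of successive returns to $\CL$ vanishes (and even that requires an argument, via $\int_\torus v=0$ and the relation between the stationary mean of the per-step displacement and the effective drift). So the increments you sum are neither centered nor independent; they are correlated through the position of the return point on $\CL$. To get a CLT you need a martingale approximation for an additive functional of this chain, which in turn requires uniform (in $\eps$) exponential mixing — a Doeblin condition for the return chain on the separatrix. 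This, together with the fact that the one-step escape probability to $\partial\cV^{\delta,\eps}\cap U_i$ has the precise asymptotics $p_\eps(x,C_i)\sim \eps^{(1-\alpha)/2}h_i^\delta(x)$ with $\int h_i^\delta\,d\lambda^0=\delta^{-1}\bar p_i$ (which produces both the $\sqrt{\delta\xi}$ scaling and the exit weights $p_i$, and which does \emph{not} follow from Theorem~\ref{thmAvgPrinShortTime} but needs the boundary-layer estimates of~\cite{Koralov04}), is exactly what the paper's proof packages into the abstract Markov-chain limit theorem (Lemma~\ref{abstractlemma}, quoted from~\cite{HairerKoralovPajorGyulai2014}). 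The independence of $\xi$, $\zeta$ and the Gaussian also comes out of that lemma at the prelimit level; deducing it from It\^o excursion theory for the limit $Y$ only describes the target, it does not prove convergence.

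Two further problems. First, identifying $Q$ "as the matrix appearing in Theorem~\ref{thmMainR2} via the matching argument~\eqref{Qmatrix}" is circular: Theorem~\ref{thmMainR2} is proved \emph{from} this Proposition, so $Q$ must be constructed here (in the paper it is $\bar Q/(\bar p_1+\cdots+\bar p_M)$, with $\bar Q$ the CLT covariance of the limiting $\eps=0$ chain, and its non-degeneracy requires a separate argument). Second, Theorem~\ref{thmAvgPrinShortTime} concerns only the projection $\Gammaeps(Z)$, which destroys exactly the information (which planar copy of each cell the process visits) that determines $\DeltaZ_1$; so the Skorokhod-representation step cannot, even in principle, deliver the joint law of $\paren{\eps^{(1-\alpha)/4}\DeltaZ_1,\Gammaeps(Z(\mu_1^{\eps,\delta}))}$ without the independent analysis of the $\R^2$-valued chain that your martingale step was meant to supply. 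The same objection applies to your proof of~\eqref{extraeq}: Doob's inequality needs an actual martingale, which again only appears after the mixing-based approximation.
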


A similar result was proved in Section 2 of~\cite{HairerKoralovPajorGyulai2014}.
However, in order to make this paper self-contained, we sketch the main steps involved in the proof and explain the necessary modifications in Appendix~\ref{sxnCLTFirstHitShort}.
Although we will not use it explicitly, we remark that the $p_i$'s above are proportional to the $q_i$.
This follows from the proof of Proposition~\ref{ppnCLTFirstHitShort}, and Corollary 2.4 in \cite{freidlin2000sheu}.

Let now $\CX_\cG$ denote the space of $\cG$-valued excursions. In other words, elements $h \in \CX_\cG$ 
are continuous functions $h \in \CC(\R_+, \cG)$ with the property that, if $h(t) = O$ for some $t\ge 0$, then $h(s) = O$ for
all $s \ge t$. Furthermore, we impose that $T(h) = \inf\{t \ge 0\,:\, h(t) = O\}$ is finite for every $h \in \CX_\cG$.
We turn $\CX_\cG$ into a metric space by setting
\begin{equation}\label{eq:exc_metric}
d(h, \bar h) = |T(h) - T({\bar h})| + \sup_{t \ge 0} d_\cG(h(t), \bar h(t))\;,
\end{equation}
with $d_\cG$ as in \eqref{e:defdG}.

We also write $\CX^\infty = (\R^2 \times \CX_\cG)^\N$, endowed with the topology of pointwise convergence, and we define a ``projection'' 
$\CP_\delta \colon \CC(\R_+; \Gb) \to \CX^\infty$
as follows.
Given an element $\omega \in \CC(\R_+; \Gb)$, we write $\omega = (V,G)$ where $V$ and $G$ are continuous $\R^2$-valued and $\cG$-valued functions respectively.
We first define the ``stopping times'' $\mu_n^\delta(\omega)$ and $\kappa_n^\delta(\omega)$ as in \eqref{eqnMuDef}, with $\Gammaeps(Z)$ replaced by $G$.
We then write $J_n(\omega) \in \CX_\cG$ for the $n^\text{th}$ downcrossing of the
process $G$. In other words, suppressing the argument $\omega$ for conciseness, we have
\begin{equ}
J_n(t) = \big(G((t+\mu_n^\delta) \wedge \kappa_n^\delta)\big)\;,
\end{equ}
so that in particular $|J_n(\omega)(0)| = \delta$ for $n > 0$ and $T(J_n(\omega)) = \kappa_n^\delta - \mu_n^\delta$.
We also define $U_n(\omega) \in \R^2$ by
$U_n(\omega) = V(\mu_{n+1}^\delta) - V(\kappa_{n}^\delta)$.
With these notations at hand, we set
\begin{equ}
\CP_\delta(\omega) = \bigl(U_n(\omega), J_n(\omega) \bigr)_{n \ge 0}\;.
\end{equ}
We then have the following lemma.

\begin{lemma} \label{cor:indlemma}
Let $\hat\nu_{\e}$ be a family of probability measures on $\fd$ such that the push forward measures~$\Gammaeps^* \hat\nu_{\e}$ converge weakly, as $\epsilon\to0$, to a probability measure $\hat\nu$ on $\cG$.
Then, the law of $\CP_\delta(\hge(Z))$
converges weakly under $\Pe^{\hat\nu_{\e}}$, as $\eps \downarrow 0$, to 
the law of $\CP_\delta(\Xi)$ under $\Pb^{\hat\nu}$.
\end{lemma}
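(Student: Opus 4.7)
The plan is to reduce the claim to finite-dimensional convergence and then induct. Since the topology on $\CX^\infty$ is that of pointwise convergence, it suffices to prove that for each fixed $N\ge 0$, the joint law under $\Pe^{\hat\nu_\eps}$ of $(U_n(\hge(Z)),J_n(\hge(Z)))_{0\le n\le N}$ converges to the joint law under $\Pb^{\hat\nu}$ of $(U_n(\Xi),J_n(\Xi))_{0\le n\le N}$. The two building blocks are Theorem~\ref{thmAvgPrinShortTime}, which gives convergence of the $\cG$-component and hence of each excursion $J_n$, and Proposition~\ref{ppnCLTFirstHitShort}, which supplies the limiting law of each displacement $U_n$. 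The induction is driven by repeated use of the strong Markov property of $Z$ at the stopping times $\kappa_n^{\eps,\delta}$ and $\mu_{n+1}^{\eps,\delta}$.

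For the base case $N=0$: the excursion $J_0$ is a measurable functional of $(\Gammaeps\circ\pi)(Z)$ stopped at its first hit of $O$, which is almost-surely continuous at sample paths of $Y$ because $Y$ reaches $O$ in finite time and crosses it transversally. Theorem~\ref{thmAvgPrinShortTime} and the continuous mapping theorem then yield $J_0 \Rightarrow J_0(\Xi)$. Applying the strong Markov property of $Z$ at $\kappa_0^{\eps,\delta}$, the point $Z(\kappa_0^{\eps,\delta})$ lies on $\CL$ (its $\Gammaeps$-projection equals $O$), and Proposition~\ref{ppnCLTFirstHitShort} provides convergence of $(U_0, G(\mu_1^{\eps,\delta}))$ to $(\sqrt{\delta\xi}\,\CN(0,Q),\zeta)$ with $\xi,\CN(0,Q),\zeta$ mutually independent. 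The \emph{uniformity} in the starting point in Proposition~\ref{ppnCLTFirstHitShort} is crucial: it allows one to integrate out the random value $Z(\kappa_0^{\eps,\delta})$ and to promote the marginal convergences to a joint one in which $U_0$ is, in the limit, independent of $J_0$, matching the fact that in $\Xi$ the process $\tilde W^Q_L$ is constant during $[0,\kappa_0]$ and the post-$\kappa_0$ local-time increment is independent of the past history of $Y$.

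For the inductive step, assume the claim holds for $N$ and condition on $\CF_{\kappa_N^{\eps,\delta}}$. By strong Markov, $\hat Z(t)\defeq Z(\kappa_N^{\eps,\delta}+t)$ solves the same SDE starting from $Z(\kappa_N^{\eps,\delta})\in\CL$, so Proposition~\ref{ppnCLTFirstHitShort}, uniformly in that starting point, gives the conditional convergence of $(U_N,G(\mu_{N+1}^{\eps,\delta}))$ to $(\sqrt{\delta\xi}\,\CN(0,Q),\zeta)$ with the new $\xi,\CN(0,Q),\zeta$ independent of $\CF_{\kappa_N^{\eps,\delta}}$. A further application of the strong Markov property at $\mu_{N+1}^{\eps,\delta}$, together with the uniform-in-initial-distribution form of Theorem~\ref{thmAvgPrinShortTime} applied to the restarted process (whose $\Gammaeps$-image concentrates on $\partial\cV^\delta$), delivers the conditional convergence of $J_{N+1}$. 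The uniformity clauses in both inputs let us interchange the limit with the outer expectation against the (already convergent) distribution of $(U_n,J_n)_{n\le N}$, yielding the extended joint convergence with the correct independence structure.

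The principal obstacle is verifying that the cut-and-splice map $\omega\mapsto (U_n(\omega),J_n(\omega))_{n\ge 0}$ from $\CC(\R_+;\Gb)$ to $\CX^\infty$ is continuous on a set of full measure under the limit law $\Pb^{\hat\nu}\circ\Xi^{-1}$. Since the map is built from hitting times of the levels $O$ and $\partial\cV^\delta$ of the $\cG$-component, continuity can fail on paths that only touch a level tangentially, or on which successive hits of $O$ are not strictly separated. However, sample paths of $Y$ almost surely cross both $O$ and $\partial\cV^\delta$ transversally: by Remark~\ref{rmkConstruct}, $Y$ restricted to each edge is a scaled Brownian motion, and the local time $L_t(O)$ is strictly increasing at every visit of $Y$ to $O$ while excursions of $Y$ away from $O$ a.s.\ attain strictly positive maxima. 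With this a.s.\ continuity established, the continuous mapping theorem combines with the strong Markov induction above to yield the claim.
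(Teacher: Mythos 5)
Your strategy is the same as the paper's: convergence of each excursion $J_n$ via Theorem~\ref{thmAvgPrinShortTime} and the a.s.\ continuity of the first-hitting functional at $Y$-paths, convergence of each displacement $U_n$ via Proposition~\ref{ppnCLTFirstHitShort}, and independence of the limiting components from the strong Markov property together with the uniformity in the initial condition of both inputs. All of that part of your argument is sound.

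There is, however, one step you assert but never prove, and it is exactly half of what the lemma claims. Your induction shows that $(U_n(\hge(Z)), J_n(\hge(Z)))_{n\le N}$ converges to an independent vector whose $U$-components are distributed as $\sqrt{\delta\xi}\,\CN(0,Q)$ and whose $J$-components are distributed as $J_0(Y)$ under $\Pb^{\hat\nu}$ (resp.\ $\Pb^{\zeta}$). The lemma asserts that this limit is the law of $\CP_\delta(\Xi)$, and you dispose of the identification with the phrase ``matching the fact that in $\Xi$ the process $\tilde W^Q_L$ is constant during $[0,\kappa_0]$ \dots''. But $U_n(\CP_\delta(\Xi)) = \tilde W^Q_{L(\mu_{n+1}^\delta)} - \tilde W^Q_{L(\kappa_n^\delta)}$, which (by independence of $\tilde W^Q$ and $Y$) has the law of $\sqrt{\Lambda}\,\CN(0,Q)$ with $\Lambda \defeq L(\mu_{n+1}^\delta)-L(\kappa_n^\delta)$; to match $\sqrt{\delta\xi}\,\CN(0,Q)$ you must show that $\Lambda$ is exponential with mean $\delta$, and you never do. The paper proves this explicitly: exponentiality follows because $L$ increases only when $Y=O$, so each return of $Y$ to $O$ before reaching $\partial\cV^\delta$ resets the future independently of the local time already accumulated (memorylessness), and the mean $\delta$ is obtained by applying the Freidlin--Sheu It\^o formula \eqref{eq:Ito_in_Freidlin-Sheu} to $f(y)=d_\cG(y,O)$, evaluating at $t=\mu_1^\delta$, and taking expectations (the martingale term vanishes, $\mathcal A_y f = 0$ off $O$, and $\sum_i \bar q_i D_i f(O)=1$, giving $\Eb^O L(\mu_1^\delta) = \delta$). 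The paper likewise notes that the edge on which $Y$ first reaches $\partial\cV^\delta$ is independent of the accumulated local time, which is what makes the limiting $\zeta$ independent of $\xi$ and hence lets the two ends of your argument meet. Without this identification your proof establishes convergence to \emph{a} limit but not to the stated one, so you should add this computation.
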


\begin{proof}
We first note that, under $\Pb^{\hat\nu}$, $\CP_\delta(\Xi)$ is a random vector $(U_n, J_n)_{n \ge 0}$ with independent components.
The distribution of $U_n$ is as in Proposition~\ref{ppnCLTFirstHitShort}, i.e., it is equal to the distribution of $\sqrt{\delta\xi} \CN(0,Q)$.
This follows from the fact that the distribution of the local time accumulated up to $\mu_1^{\delta}$ under $\Pb^{O}$ is the same as that of $\delta\xi$. Indeed, exponentiality follows from the fact that $L_t$ can only grow when $Y_t=0$, while the expectation is given by applying \eqref{eq:Ito_in_Freidlin-Sheu} to the function $f(y)=d_{\mathcal{G}}(y,O)$, plugging in $t=\mu_1^{\delta}$, and taking expectations (see e.g. Exercise 4.12 Chapter VI in \cite{revuz1999continuous}).
The distribution of $J_0$ is the distribution of $J_0(Y)$ under 
$\Pb^\mu$, while the distribution for each of the $J_n$ for $n>0$ is equal to the 
distribution of $J_0(Y)$ under $\Pb^\zeta$, where $\zeta$ is as in 
Proposition~\ref{ppnCLTFirstHitShort}. The fact that these are independent follows from the 
strong Markov property, combined with the fact that the location at which the $Y$-component 
of the process first hits $\cV^\delta$ is independent of the local time accumulated until then.

We then see that, by Theorem~\ref{thmAvgPrinShortTime}, the law of $J_0(\hge(Z))$ 
under $\Pe^{\hat\nu_{\e}}$ 
does indeed converge weakly as $\eps \to 0$ to the law of $J_0(Y)$ under $\Pb^{\hat\mu}$. 
This is because, although the map 
$Y \mapsto J_0(Y)$ is not continuous, its points of discontinuity, which consist precisely 
of those paths which either never hit $O$
or such that their first hit of $O$ is not transverse, are of measure $0$ under $\Pb^{\hat\nu}$.

The convergence of the other components of the random vector $(U_n, J_n)_{n \ge 0}$
to their respective limits follows in the same way
from Proposition~\ref{ppnCLTFirstHitShort}, combined with Theorem~\ref{thmAvgPrinShortTime}. 
The independence of the components of the limiting vector immediately follows from the 
strong Markov property of the process $Z$, the fact that the convergences in  
Proposition~\ref{ppnCLTFirstHitShort} and Theorem~\ref{thmAvgPrinShortTime} are 
uniform with respect to the initial condition, and the fact that $\zeta$
is independent of the other limiting random variables in Proposition~\ref{ppnCLTFirstHitShort}. 
\end{proof}

\begin{proof}[Proof of Theorem~\ref{thmMainR2}]
  We first note that as a consequence of the periodicity of the problem, we can
(and will henceforth) restrict ourselves to the case when the probability measure
$\nu_\eps$ is concentrated on $\fd$, so that the limiting measure $\hat \nu$ is of the form
$\hat \nu = \delta_0 \otimes \nu$ for some probability measure $\nu$ on $\cG$.
We thus only need to prove that, under the conditions of the theorem, $\hge(Z)$ converges in law to $\Xi$ with initial measure $\delta_0\otimes \nu$.
We begin by defining a ``concatenation''  map $\Rde \colon \CX^\infty \to \CC(\R_+; \Gb )$ as follows.
Given $\delta > 0$, $U,V \in \R^2$ and $G = (i,y) \in \cG$, we define the interpolation
$L_\delta(U,V,G) \colon [0,\delta^2] \to \Gb$ by
\begin{equ}
L_\delta(U,V,G)(t) \defeq \paren[\big]{U + \delta^{-2} t(V-U), (i,  \delta^{-2} ty)}\;,
\end{equ}
so that $L_\delta(U,V,G)(0) = (U,O)$ and $L_\delta(U,V,G)(\delta^2) = (V,G)$.
Given $\X = (\X_n)_{n \ge 0}$ with $\X_n = (U_n, J_n) \in \R^2 \times \CX_\cG$, we define recursively two sequences of ``excursion times'' 
$E_n, E_n' \in \R_+$ and locations $W_n \in \R^2$ by
\begin{equation*}
    E_n(\X) \defeq n\delta^2 + \sum_{j=0}^{n-1} T(J_j)\;,
    \qquad E_n'(\X) = E_n(\X) + T(J_n)\;,
    \qquad W_n = \sum_{j=0}^{n-1} U_j\;.
\end{equation*}
with the natural conventions that $E_0 = 0$ and $W_0 = 0$. With these notations at hand, we then set
\begin{equ}
\Rde(\X)(t) = 
\left\{\begin{array}{cl}
	(W_n, J_n(t-E_n(\X))) & \text{for $t \in [E_n(\X), E_n'(\X)]$,} \\
	L_\delta(W_n,W_{n+1},J_{n+1}(0))(t-E_n'(\X)) & \text{for $t \in [E_n'(\X), E_n'(\X)+\delta^2]$.}
\end{array}\right.
\end{equ}
This definition is unambiguous
(and the function $\Rde(\X)$ is continuous) since, at $t = E_n'(\X)$, both 
expressions equal $(W_n,O)$, while at $t = E_{n+1}(\X) = E_n'(\X) + \delta^2$ both expressions
equal $(W_{n+1}, J_{n+1}(0))$.

It is straightforward to see that $\Rde$ is a right inverse for $\CP_\delta$, i.e.\
$\CP_\delta \Rde = \id$.
On the other hand, clearly $\Rde \CP_\delta \neq \id$, however, we will construct a set of trajectories $\omega$ on which $(\Rde \CP_\delta)(\omega)$ is close to $\omega$.
For this, we need a bit of additional notation.
Given a trajectory $\omega \in \CC(\R_+; \Gb)$ and times $\mu_n^\delta(\omega)$
and $\kappa_n^\delta(\omega)$ as above,
we define the corresponding downcrossing and upcrossing durations
by
\[
T_{n,\delta}^d=\kappa_n^\delta-\mu_n^\delta,\qquad T_{n,\delta}^u=\mu_n^{\delta}-\kappa_{n-1}^{\delta},\qquad n \ge 0\;.
\]
We also define the number of down / upcrossings up to time $t$ by
\begin{equ}
D_t^{\delta}=\inf\{n\geq 0: \kappa_n^{\delta} \geq t\}\;,\qquad
N_t^{\delta}=\sup\{n\geq 0: \mu_n^{\delta} \leq t\}\;,
\end{equ}
see Figure~\ref{fig:times}, as well as 
the quantities
\begin{equ}
\edelta(t) = t + \sum_{n=1}^{\bar N_t^\delta} T_{n,\delta}^u\;,\quad
\edeltab(t) = t + \delta^2 \bar N_t^\delta\;,
\qquad   \bar N_t^\delta = \sup \Bigl\{k\ge 0\,:\, \mu_k^\delta \le t + 
\sum_{n=1}^{k} T_{n,\delta}^u\Bigr\}\;.
\end{equ}

\begin{figure}
    \centering
  \begin{minipage}{.8\linewidth}\centering
\begin{tikzpicture}[scale=1.5]
\draw[->] (0,0) -- (7,0);
\node at (7.2,0) {$t$};

\def\xxx{{0.5,1.2,2,2.8,3.4,5.5,6.2}}
\def\double{{0,0,1,1,2,2,3,3,4,4}}

\foreach \xm in {0,1,2} {
\pgfmathtruncatemacro{\x}{\xm + 1}
\pgfmathsetmacro{\taux}{\xxx[2*\xm+1]}
\pgfmathsetmacro{\thetax}{\xxx[2*\xm+2]}
	\fill[darkred!10] (\taux,0) rectangle (\thetax,1.5);
	\draw[line width=3pt,draw=darkred] (\taux,0) -- (\thetax,0);
	\draw[dashed] (\taux,0) -- (\taux,1.5);
	\draw[dashed] (\thetax,0) -- (\thetax,1.5);
	\node at (\taux,-0.3) {$\kappa_\xm^\delta$};
	\node at (\thetax,-0.3) {$\mu_\x^\delta$};
}

\foreach \xm in {0,...,5} {
\pgfmathsetmacro{\xx}{0.5*(\xxx[\xm] + \xxx[\xm+1])}
\pgfmathsetmacro{\NN}{\double[\xm]}
\pgfmathsetmacro{\DD}{\double[\xm+1]}
	\node at (\xx,0.3) {$\NN$};
	\node at (\xx,0.9) {$\DD$};
}

\draw (0.5,0) -- (0.5,1.5);

\draw[dotted] (0,0.6) -- (7,0.6);
\draw[dotted] (0,1.2) -- (7,1.2);
\node at (0,0.3) {$N_t^\delta$};
\node at (0,0.9) {$D_t^\delta$};
\end{tikzpicture}
\caption{Values of $D_t^\delta$ and $N_t^\delta$ in relation to the stopping times $\kappa_i$ and $\mu_i$.
The highlighted regions are when the upcrossings from $O$ to $\cV^\delta$ occur. The range of $\edelta$ falls into the non-highlighted regions, while the range of $\edeltab$ falls into the
region obtained by shrinking / expanding the highlighted regions in such a way that each
of them has length $\delta^2$.}
\label{fig:times}
  \end{minipage}
\end{figure}
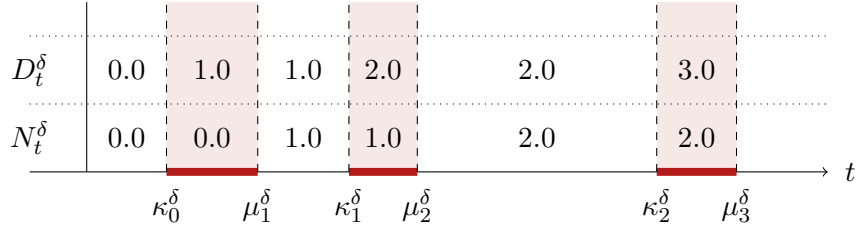

These quantities can be interpreted as follows: we stop a `special' clock every time the process hits the vertex $O$, 
and re-start it once the process reaches the level set $\partial \cV^\delta$.
Then $\edelta(t)$ is the real time that has elapsed when the special clock reaches time $t$, $\bar N_t^\delta$ is the number of upcrossings completed before this happens, and $\edeltab(t)$ is the analogous quantity to $\edelta(t)$ when, for every upcrossing, we count $\delta^2$ in real time.

Given $\eta, r, T, \delta > 0$, we then define a set $\CF(\eta,r,T,\delta)$ of trajectories
$\omega$ such that the following properties hold:
\begin{enumerate}
\item For all $s,t \le T+r$ with $|t-s| \le r$, one has $d_\Gb(\omega(t),\omega(s)) \le \eta$.
\item One has the bounds $\edelta(T)-T \le r$ 
and $\delta^2 (2+\bar N_T^\delta(\omega)) \le \eta$.
\item Writing $\omega = (V,G)$ as above, 
for every $n \in \{0,\ldots,\bar N_T^\delta(\omega)\}$, one has the bounds
\begin{equation*}
\sup_{t \in [\mu_n^\delta, \kappa_n^\delta]} |V(t) - V(\mu_n^\delta)| \le \delta^2,
\qquad\text{and}\qquad
|V(0)| \le \delta^2 \;.
\end{equation*}
\end{enumerate}

The following lemmas are the crucial ingredients for our proof of the theorem.

\begin{lemma}\label{lmaGoodSet}
Provided that $r>\delta^2$, every
$\omega \in \CF(\eta,r,T,\delta)$ satisfies the bound
\begin{equ}[e:approxDelta]
\sup_{t \in [0,T]} d_\Gb\bigl(\omega(t), \bigl(\Rde\CP_\delta\omega\bigr)(t)\bigr) \le 4 \eta\;.
\end{equ}
\end{lemma}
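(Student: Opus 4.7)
The plan is a direct triangle-inequality argument after setting up a bijection between reconstructed and real time. For any $t \in [0,T]$, I would find the unique $n \ge 0$ with either $t \in [E_n, E_n']$ (a reconstructed downcrossing) or $t \in [E_n', E_{n+1}]$ (a reconstructed interpolation), and define a corresponding real time $t''$ matching excursion number: set $t'' = \mu_n^\delta + (t - E_n)$ in the first case and $t'' = \kappa_n^\delta + \delta^{-2}(t - E_n')\,T^u_{n+1,\delta}$ in the second. The key identity
\begin{equation*}
E_n - \mu_n^\delta = n\delta^2 - \sum_{j=1}^n T^u_{j,\delta}
\end{equation*}
together with property~(2) of $\CF(\eta,r,T,\delta)$ (which yields both $n\delta^2 \le \eta$ and $\sum_{j=1}^n T^u_{j,\delta} \le \edelta(T) - T \le r$) gives $|t - t''| \le r + \eta$, and the definitions ensure that both $t, t'' \in [0, T+r]$.

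Next, I would split
\begin{equation*}
d_\Gb\bigl(\omega(t), (\Rde\CP_\delta\omega)(t)\bigr) \le d_\Gb\bigl(\omega(t), \omega(t'')\bigr) + d_\Gb\bigl(\omega(t''), (\Rde\CP_\delta\omega)(t)\bigr)
\end{equation*}
and estimate each piece. The first term is at most $2\eta$ by one or two applications of property~(1), depending on whether $|t-t''|\le r$ or not (the regime $\eta \le r$ being the substantive one; $\eta > r$ is even easier). For the second term on a downcrossing interval $t \in [E_n, E_n']$, the $\cG$-components agree by construction (since $J_n(s) = G(\mu_n^\delta + s)$), so only $|V(t'') - W_n|$ matters. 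Expanding $W_n = \sum_{j=0}^{n-1} U_j$ into the telescoping-with-gaps identity
\begin{equation*}
V(\mu_n^\delta) - W_n = V(\kappa_0^\delta) + \sum_{j=1}^{n-1}\bigl(V(\kappa_j^\delta) - V(\mu_j^\delta)\bigr),
\end{equation*}
and applying property~(3) to each summand (together with $|V(\kappa_0^\delta)| \le |V(0)| + \delta^2 \le 2\delta^2$) gives $|V(\mu_n^\delta) - W_n| \le (n+1)\delta^2 \le \eta$ by property~(2); combined with $|V(t'') - V(\mu_n^\delta)| \le \delta^2$ from property~(3) this yields $|V(t'') - W_n| \le 2\eta$.

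The genuinely delicate part is the interpolation intervals $t \in [E_n', E_{n+1}]$, where neither the $\R^2$- nor the $\cG$-components agree exactly between $\omega$ and the reconstruction, and the time parameter is additionally rescaled from real duration $T^u_{n+1,\delta}$ to the fixed duration $\delta^2$. Here I would compare both $\omega(t'')$ and $(\Rde\CP_\delta\omega)(t)$ to the common anchor points $\omega(\kappa_n^\delta)$ and $(\Rde\CP_\delta\omega)(E_n') = (W_n, O)$, which themselves are within $\eta$ of each other by the same telescoping estimate. The crucial observation is that each individual upcrossing duration satisfies $T^u_{j,\delta} \le \sum_j T^u_{j,\delta} \le r$, so property~(1) forces $\omega$ to stay within $\eta$ of $\omega(\kappa_n^\delta)$ throughout $[\kappa_n^\delta, \mu_{n+1}^\delta]$; in particular $|U_n| \le \eta$ and $|G(\mu_{n+1}^\delta)| = \delta \le \eta$. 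The explicit linear form of $L_\delta$ then forces $(\Rde\CP_\delta\omega)(t)$ to remain within $\eta$ of $(W_n, O)$ throughout the interpolation, and similarly for $\omega(t'')$. Summing the two halves of the triangle inequality gives at most $2\eta$ on interpolation intervals as well, and the overall bound is $4\eta$, as required.
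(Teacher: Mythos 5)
Your overall strategy is the same as the paper's: a time change matching reconstructed time to real time, the telescoping identity for $V(\mu_n^\delta)-W_n$ controlled by property~(3), and property~(1) to absorb the time discrepancy. On the downcrossing intervals your argument is fine (and in fact more careful than the paper's, which tacitly uses $\eta\le r$ to get away with a single application of property~(1)). The problem is the interpolation intervals, where your bookkeeping does not close to $4\eta$. First, your choice $t''=\kappa_n^\delta+\delta^{-2}(t-E_n')\,T^u_{n+1,\delta}$ makes $|t-t''|$ as large as about $2r$ (the shift $E_n'-\kappa_n^\delta$ can be $-r$, and the rescaling contributes up to another $T^u_{n+1,\delta}\le r$), so the first term already costs $2\eta$, and moreover $t''\le\mu_{n+1}^\delta$ can exceed $T+r$, outside the range where property~(1) is assumed to hold. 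Second, your chain for the remaining term passes through \emph{two} anchors, $\omega(\kappa_n^\delta)$ and $(W_n,O)$, each link costing $\eta$: $d(\omega(t''),\omega(\kappa_n^\delta))\le\eta$, $d(\omega(\kappa_n^\delta),(W_n,O))\le\eta$, $d((W_n,O),\omega^\delta(t))\le\eta$, i.e.\ $3\eta$, not $2\eta$. Added to the first term this gives at least $5\eta$, so the stated bound \eqref{e:approxDelta} is not established as written (though a $6\eta$ version would serve the application in the proof of Theorem~\ref{thmMainR2} equally well after renaming constants).

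The fix, which is what the paper does, is not to map $t$ into the real upcrossing interval at all. For $t$ in an interpolation interval write $t_0=E_n'$ and compare in \emph{reconstructed} time: $|t-t_0|\le\delta^2<r$, so $d_\Gb(\omega(t),\omega(t_0))\le\eta$ by a single use of property~(1); then $d_\Gb(\omega(t_0),\omega^\delta(t_0))\le 2\eta$ is exactly the downcrossing estimate at the endpoint $t_0$; and $d_\Gb(\omega^\delta(t_0),\omega^\delta(t))\le |U_n|+\delta\le\eta$ because the linear interpolation $L_\delta$ moves monotonically over a total $d_\Gb$-displacement equal to $d_\Gb(\omega(\kappa_n^\delta),\omega(\mu_{n+1}^\delta))$, which is at most $\eta$ since $T^u_{n+1,\delta}\le\edelta(T)-T\le r$. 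This yields $\eta+2\eta+\eta=4\eta$ and avoids both the doubled time discrepancy and the extra anchor.
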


\begin{lemma}\label{lmaGoodBound}
For every $\eta > 0$, every $T>0$, and every sequence $\nu_\eps$ of probability measures on
$\R^2$ such that $\Gamma_{\e}^{*}\nu_{\e}$ is tight, 
there exist $\delta_0 > 0$, $r \geq \delta_0^2$ such that for $\delta\in(0,\delta_0)$, there is $\eps_0 > 0$ such that
\begin{equ}
\Pe^{\nu_\eps} \bigl(\hge(Z) \in \CF(\eta,r,T,\delta)\bigr) \ge 1-\eta\;,
\end{equ}
for every $\eps \le \eps_0$.
\end{lemma}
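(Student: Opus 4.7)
My plan is to decompose the failure event $\{\hge(Z) \notin \CF(\eta,r,T,\delta)\}$ into three events $B_1$, $B_2$, $B_3$ corresponding to conditions (1)--(3) in the definition of $\CF$, and to bound each by $\eta/3$. The parameters are chosen in a nested fashion: first $r>0$ small enough to handle the modulus-of-continuity, then $\delta_0 \le r$ small enough for the upcrossing bounds, and finally $\eps_0 = \eps_0(\delta)$ small enough for the per-downcrossing spatial bound. Throughout, Theorem~\ref{thmAvgPrinShortTime} and Proposition~\ref{ppnCLTFirstHitShort} provide the ingredients; Lemma~\ref{cor:indlemma} is used to control the number of excursions.

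For $B_1$, the graph component is tight in $\CC(\R_+;\cG)$ by Theorem~\ref{thmAvgPrinShortTime}, which yields a uniform modulus-of-continuity bound via Prokhorov. For the spatial component $V = \eps^{(1-\alpha)/4}Z$, I combine Proposition~\ref{ppnCLTFirstHitShort}, which gives both the magnitude of the jump $\DeltaZ_n$ between consecutive visits to $\CL$ and, via the excursion bound \eqref{extraeq}, control on the intra-cycle supremum $\sup_{t\le\kappa_1^{\eps,\delta}}|V(t)|$, with the strong Markov property applied at the times $\kappa_n^{\eps,\delta}$ and the bound on the number of cycles obtained in Step $B_2$. This yields tightness of $V$ in $\CC(\R_+;\R^2)$, hence of $\hge(Z)$ in $\CC(\R_+;\Gb)$, and the required modulus-of-continuity estimate.

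For $B_2$, I use that $\delta\bar N_T^\delta$ converges in law (by Lemma~\ref{cor:indlemma} applied to a slight variant) to a constant multiple of the local time $L_T(O)$, which is a.s.\ finite; hence $\delta\bar N_T^\delta$ is tight and $\delta^2(2+\bar N_T^\delta) \le \eta$ holds with high probability for $\delta$ small. For $\edelta(T)-T = \sum_{n=1}^{\bar N_T^\delta}T_{n,\delta}^u \le r$, I use that each upcrossing time $T_{n,\delta}^u$ is of order $\delta^2$ in $Z$-time (the first-hitting time of height $\delta$ for a diffusion on $\cG$ whose local rescaling is Brownian scales as $\delta^2$), so summing over the $\CO(1/\delta)$ upcrossings gives $\edelta(T)-T = \CO(\delta)$, smaller than $r$ for $\delta$ small.

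Bounding $B_3$ is the main difficulty. During a single downcrossing $[\mu_n^{\eps,\delta}, \kappa_n^{\eps,\delta}]$, the unscaled process $Z$ remains in the thin annular strip $\{|H|\le\eps^{\alpha/2}\delta\}$ around $\CL$ for a duration that, converted to $\tilde X$-time, equals $\CO(\delta^2 \eps^{1-\alpha}/|\log\eps|)$. Adapting the Poincar\'e-map analysis around the saddles $A_i$ used in the proof of Proposition~\ref{ppnCLTFirstHitShort}---tracking both the deterministic transport along level sets (with its logarithmic slowdown near each saddle) and the Brownian fluctuation of size $\sqrt{\eps\cdot(\text{time in }\tilde X)}$---one obtains $\sup_{t\in[\mu_n^{\eps,\delta}, \kappa_n^{\eps,\delta}]}|Z_t - Z_{\mu_n^{\eps,\delta}}| \le C(\delta)$ with high probability, uniformly in the starting point on $\partial\cV^\delta$ and in $\eps$. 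Rescaling by $\eps^{(1-\alpha)/4}$ gives a spatial displacement of $\CO(\eps^{(1-\alpha)/4}C(\delta))$, which is below $\delta^2$ once $\eps < \eps_0(\delta)$. A union bound over the $\bar N_T^\delta + 1 = \CO(1/\delta)$ downcrossings, together with $|V(0)|\le\delta^2$ coming from the concentration of $\nu_\eps$ on $\fd$ and the rescaling, closes Step~3. The hard part is the quantitative per-downcrossing bound uniform in $\eps$: it requires a careful re-reading of the saddle-passage estimates with explicit tracking of constants and of the logarithmic corrections coming from the dwell time near each saddle.
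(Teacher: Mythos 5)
Your handling of properties (1) and (2) of $\CF$ is essentially the paper's: tightness of $\hge(Z)$ in $\CC(\R_+;\Gb)$ via the excursion decomposition, Proposition~\ref{ppnCLTFirstHitShort}, the bound \eqref{extraeq} and a maximal inequality for the partial sums of the $\DeltaZ_n$ (this is the paper's Lemma~\ref{lmaTightness}), and negligibility of the upcrossing times together with tightness of $\delta \bar N_T^\delta$ via convergence to the local time (the paper's Lemma~\ref{lmaUpcrossingNegligible} combined with Lemma~\ref{lmaDowncrossing}). So far, fine.

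The treatment of property (3) is where the proposal goes wrong, in two ways. First, the premise is false: during a downcrossing $[\mu_n^{\eps,\delta},\kappa_n^{\eps,\delta}]$ the process does \emph{not} remain in the strip $\{|H|\le \eps^{\alpha/2}\delta\}$. It starts at $|H|=\eps^{\alpha/2}\delta$ and is stopped at its first return to $\CL$, and before that return it reaches level $h$ with probability comparable to $\eps^{\alpha/2}\delta/h$; among the $\CO(1/\delta)$ downcrossings up to time $T$, a non-negligible number make deep excursions into the cell interior. Any argument built on confinement to the thin strip (and on saddle-passage estimates inside it) therefore does not cover the typical realisation of the event you need. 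Second, and more importantly, the entire apparatus is unnecessary: by definition $\kappa_n^{\eps,\delta}$ is the \emph{first} time after $\mu_n^{\eps,\delta}$ that $Z$ hits $\CL$, so on $[\mu_n^{\eps,\delta},\kappa_n^{\eps,\delta})$ the continuous path $Z$ stays in a single connected component of $\R^2\setminus\CL$, i.e.\ in one bounded cell of diameter at most a fixed constant $C$. Hence
\begin{equation*}
\sup_{t\in[\mu_n^{\eps,\delta},\kappa_n^{\eps,\delta}]}|V(t)-V(\mu_n^{\eps,\delta})|\le C\,\eps^{\frac{1-\alpha}{4}}\le \delta^2
\end{equation*}
holds \emph{deterministically} (for every $n$ simultaneously, with no union bound and no exceptional event) as soon as $\eps^{(1-\alpha)/4}\le \delta^2/C$; likewise $|V(0)|\le\delta^2$ once $\nu_\eps$ is reduced to $\fd$. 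This is exactly how the paper disposes of property (3) in one line. What you flag as ``the hard part'' is in fact the trivial part, and the quantitative saddle-passage analysis you propose in its place is both misdirected and not needed.
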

\begin{lemma}\label{lmaUpcrossingNegligible}
We have
\begin{equ}
\sup_{y \in \cG} \Eb^y|\edelta(t)- t| =\CO(\delta) \;
\qquad\text{and}\qquad
\sup_{y \in \cG} \Eb^y|\delta(\bar N^\delta_t -D_t^{\delta})| \xrightarrow{\delta \to 0} 0 \;.
\end{equ}
\end{lemma}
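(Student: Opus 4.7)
The plan is to combine the parabolic self-similarity of the limit process $Y$ with a Wald-type identity, reducing both claims to renewal estimates. The generator $\mathcal{A}$ is invariant under the rescaling $s \mapsto \lambda^{-1}Y_{\lambda^2 s}$, so the renewal variables
\begin{equation*}
T_{n,\delta}^u = \mu_n^\delta - \kappa_{n-1}^\delta, \qquad T_{n,\delta}^d = \kappa_n^\delta - \mu_n^\delta
\end{equation*}
will satisfy $T_{n,\delta}^u \stackrel{d}{=} \delta^2 \tilde T_n^u$ and $T_{n,\delta}^d \stackrel{d}{=} \delta^2 \tilde T_n^d$, for sequences whose law does not depend on $\delta$. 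Using the description in Remark~\ref{rmkConstruct}, $\tilde T^u$ is stochastically dominated by $\inf\{t:|B_t| = 1/\sqrt{\min_i a_i}\}$ for a standard Brownian motion~$B$, so $\Eb[\tilde T^u] < \infty$; meanwhile $\tilde T^d$ behaves like a Brownian first-passage time to zero from a positive level and so has the heavy tail $\Pb(\tilde T^d > s) \sim c\, s^{-1/2}$. The strong Markov property at visits to~$O$ makes $\{T_{n,\delta}^u\}_{n \ge 1}$ an i.i.d.\ family with each $T_{n,\delta}^u$ independent of $\sigma(T_{0,\delta}^d,\ldots,T_{n-1,\delta}^d)$.

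For the first claim, I would decompose $\edelta(t) - t = \sum_{n=1}^{\bar N_t^\delta} T_{n,\delta}^u$ and exploit that $\{\bar N_t^\delta \ge n\}$ depends only on $(T_{j,\delta}^d)_{j < n}$, so the independence above yields the Wald-type identity
\begin{equation*}
\Eb^y[\edelta(t) - t] = \Eb^O[T_{1,\delta}^u] \cdot \Eb^y[\bar N_t^\delta] \le C\delta^2 \cdot \Eb^y[\bar N_t^\delta].
\end{equation*}
Self-similarity recasts the renewal counter as $\bar N_t^\delta \stackrel{d}{=} \bar N^1_{t/\delta^2}$ when starting at $O$, and the classical renewal theorem for i.i.d.\ summands with stable tail of index $\tfrac12$ furnishes $\Eb^O[\bar N_t^\delta] \le C'\sqrt{t}/\delta$. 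Uniformity in $y$ is immediate: prepending the nonnegative initial time $T_{0,\delta}^d$ (the only $y$-dependent term) can only decrease $\bar N_t^\delta$, so the supremum is attained at $y = O$. Multiplying yields $\sup_y \Eb^y|\edelta(t) - t| = \CO(\delta\sqrt{t}) = \CO(\delta)$ for fixed~$t$.

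For the second claim, note that $\bar N_t^\delta \ge D_t^\delta$ always (by real time $\edelta(t) \ge t$ more downcrossings have been completed), and both $\Eb^y[\bar N_t^\delta]$ and $\Eb^y[D_t^\delta]$ are renewal expectations whose underlying i.i.d.\ increments ($T_n^d$ and $T_n^u + T_n^d$, respectively) share the same stable tail of index~$\tfrac12$: the upcrossing contribution is of order $\delta^2$ and therefore negligible in the tail. The Dynkin--Erickson renewal theorem then gives $\delta\,\Eb^O[\bar N_t^\delta] \to K\sqrt{t}$ and $\delta\,\Eb^O[D_t^\delta] \to K\sqrt{t}$ with the same constant $K$, so after subtracting and applying the same monotonicity argument to pass from $y = O$ to general $y$, I obtain $\delta\,\sup_y \Eb^y[\bar N_t^\delta - D_t^\delta] \to 0$. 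The principal obstacle is proving the renewal asymptotics with \emph{matching} leading constants in the heavy-tailed regime; here one leans on the explicit hitting-time density of reflected Brownian motion provided by Remark~\ref{rmkConstruct} to identify the common constant $K$ in closed form, making the subtraction above rigorous.
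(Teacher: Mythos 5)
Your overall strategy is sound and it genuinely differs from the paper's in both halves. For the first bound, you use exact Brownian scaling ($T^{u}_{n,\delta}\stackrel{d}{=}\delta^{2}\tilde T^{u}$, $T^{d}_{n,\delta}\stackrel{d}{=}\delta^{2}\tilde T^{d}$), a Wald identity, and a renewal estimate $\Eb^{O}\bar N^{\delta}_t\le C\sqrt t/\delta$; the paper instead avoids Wald entirely by applying Cauchy--Schwarz termwise to $\sum_n \one_{n\le \bar D^{\delta}_t}T^{u}_{n,\delta}$ and controlling $\Pb(\bar D^{\delta}_t>K)$ with an exponential Chebyshev bound $\Eb e^{-\lambda T^{d}}\le e^{-\delta b_\lambda}$. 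Your Wald step is legitimate: since $\mu_k^\delta=\sum_{j=0}^{k-1}T^{d}_{j,\delta}+\sum_{j=1}^{k}T^{u}_{j,\delta}$, the defining condition for $\bar N^{\delta}_t$ reduces to $\sum_{j=0}^{k-1}T^{d}_{j,\delta}\le t$, so $\{\bar N^{\delta}_t\ge n\}$ is indeed measurable with respect to $(T^{d}_{j,\delta})_{j<n}$ and independent of $T^{u}_{n,\delta}$ by the strong Markov property at $\kappa^{\delta}_{n-1}$. For the second bound, the paper takes a softer route: it uses $|\bar N^{\delta}_t-\bar D^{\delta}_t|\le 1$, splits off $\delta\,\Eb^y(D^{\delta}_{t+\eta}-D^{\delta}_t)$, and compares $\delta D^{\delta}$ to the local time via Lemma~\ref{lmaDowncrossing}, whereas you subtract two renewal expansions with matching leading constants. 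Your observation that the matching of constants is automatic is correct and you do not actually need the closed-form density: since $\tilde T^{u}$ has exponential tails, $\Pb(\tilde T^{u}+\tilde T^{d}>s)\sim\Pb(\tilde T^{d}>s)\sim \tilde c\,s^{-1/2}$, and the renewal theorem for increments with a regularly varying tail of index $1/2$ then yields the same constant for both counting processes.

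There is, however, one genuine gap: the uniformity in $y$ for the second claim. Your monotonicity argument works for a single counting process (prepending the delay $T^{d}_{0,\delta}\ge 0$ can only decrease $\bar N^{\delta}_t$, so $\Eb^{y}\bar N^{\delta}_t\le\Eb^{O}\bar N^{\delta}_t$), but it does not transfer to the \emph{difference} $\bar N^{\delta}_t-D^{\delta}_t$: the delay decreases both terms, so the difference is not monotone in the starting point, and $\Eb^{y}[\bar N^{\delta}_t-D^{\delta}_t]\le\Eb^{O}[\bar N^{\delta}_t-D^{\delta}_t]$ does not follow. What the strong Markov property actually gives is, conditioning on $T^{d}_{0,\delta}=s$,
\begin{equation*}
\Eb^{y}\bigl[\bar N^{\delta}_t-D^{\delta}_t\bigr]
=\Eb^{y}\Bigl[\bigl(\bar N^{\delta,O}_{t-T^{d}_{0,\delta}}-D^{\delta,O}_{t-T^{d}_{0,\delta}}\bigr)\one_{\{T^{d}_{0,\delta}\le t\}}\Bigr]
\le\sup_{0\le s\le t}\Eb^{O}\bigl[\bar N^{\delta}_s-D^{\delta}_s\bigr],
\end{equation*}
so you must prove that $\delta\,\Eb^{O}[\bar N^{\delta}_s-D^{\delta}_s]\to 0$ \emph{uniformly} over $s\in[0,t]$, not just for fixed $s$. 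This is fixable within your framework: for $s\le\eta$ both $\delta\,\Eb^{O}\bar N^{\delta}_s$ and $\delta\,\Eb^{O}D^{\delta}_s$ are bounded by $C\sqrt\eta$ (so the nonnegative difference is small), while for $s\in[\eta,t]$ one has $s/\delta^{2}\ge\eta/\delta^{2}\to\infty$ uniformly, and the renewal asymptotics $U(r)=K\sqrt r\,(1+o(1))$ apply uniformly on that range, giving an error $O(\epsilon\sqrt t)$. But as written, "the same monotonicity argument" is not a proof of this step, and it is the one place where your sketch would fail if transcribed literally.
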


\begin{lemma}[\cite{freidlin2000sheu}]\label{lmaDowncrossing}
For every~$t > 0$, we have
$\lim_{\delta \downarrow 0} \sup_{y\in \cG} \Eb^y \abs{ \delta D_t^{\delta} - L_t }  = 0$.
\end{lemma}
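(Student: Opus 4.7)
The strategy is to reduce to the case $Y_0 = O$ via the strong Markov property, and then use the explicit construction of $Y$ in Remark~\ref{rmkConstruct} together with It\^o's excursion theory for Brownian motion.

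On the event $\{t < \kappa_0^\delta\}$, i.e.\ before $Y$ first reaches $O$, both $\delta D_t^\delta$ and $L_t$ vanish. Applying the strong Markov property at $\kappa_0^\delta$ therefore gives
\[
  \Eb^y|\delta D_t^\delta - L_t| \le \sup_{s \in [0, t]} \Eb^O|\delta D_s^\delta - L_s|
\]
uniformly in $y \in \cG$, so it suffices to prove that the right-hand side tends to $0$ (with uniformity in $s \in [0,t]$). Under $\Pb^O$, the construction of Remark~\ref{rmkConstruct} provides a standard one-dimensional Brownian motion $B$ with $B_0 = 0$ and an independent i.i.d.\ labelling sequence $\{i_k\}$, with $\P(i_k = j) = \bar p_j \propto q_j/\sqrt{a_j}$, such that $Y_t = O$ iff $B_t = 0$, excursions of $Y$ correspond bijectively to excursions of $B$, and an excursion labelled $j$ of Brownian height $h$ corresponds to an excursion of $Y$ on edge $j$ of graph height $\sqrt{a_j}\, h$. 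Applying~\eqref{eq:Ito_in_Freidlin-Sheu} to $f(y) = d_\cG(y,O)$ (equivalently, inserting test functions concentrated near $O$ into the occupation-time formula in Definition~\ref{def_local_time}(c)) then identifies $L_t = c_0 \, \ell_t$ for an explicit constant $c_0 > 0$ depending on $(a_i, q_i)$, where $\ell_t$ is the local time of $B$ at $0$.

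A downcrossing of $Y$ from $\partial \cV^\delta$ to $O$ now corresponds precisely to an excursion of $B$ labelled by some $j$ whose Brownian height exceeds $\delta/\sqrt{a_j}$. By It\^o's excursion theory, the excursions of $B$ form a Poisson point process on $[0,\ell_t]$ with excursion measure $n$ satisfying $n(\text{height}\ge h) = 1/(2h)$; thinning by the independent labels and by the corresponding height thresholds gives that, conditional on $\ell_t$, the count $D_t^\delta$ is Poisson with mean $(\ell_t/(2\delta)) \sum_j \bar p_j \sqrt{a_j}$, which, once the constant $c_0$ is matched, equals $L_t/\delta$. The conditional Poisson variance of $\delta D_t^\delta$ is then $\delta L_t$, so
\[
  \Eb^O|\delta D_s^\delta - L_s|^2 \;=\; \delta\, \Eb^O L_s \;\le\; \delta\, \Eb^O L_t \;=\; \CO(\delta)
\]
uniformly in $s \in [0,t]$. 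Since both $s \mapsto \delta D_s^\delta$ and $s \mapsto L_s$ are non-decreasing, picking a finite partition of $[0, t]$ along which $L$ has small oscillation and combining with the above pointwise $L^2$-bound at the partition points upgrades this to $\sup_{s \le t} \Eb^O|\delta D_s^\delta - L_s| \to 0$, as required.

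The only delicate point is the matching of the constant $c_0$ between the identification $L_t = c_0 \ell_t$ and the conditional Poisson intensity of $D_t^\delta$; this is a bookkeeping exercise in the excursion-theoretic construction and ensures that $\Eb^O[\delta D_t^\delta] = \Eb^O[L_t]$ exactly, which is what drives the $L^2$ convergence above.
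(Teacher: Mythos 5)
The paper does not actually prove this lemma: it is quoted from the proof of Lemma~2.2 in \cite{freidlin2000sheu}, so any self-contained argument is necessarily ``a different route''. The route you choose --- reduce to $\Pb^O$ by the strong Markov property at the first hitting time of $O$ (on $\{t<\kappa_0^\delta\}$ both quantities vanish, and the shifted downcrossing counts and local time match up, so the reduction is legitimate), pass to the one-dimensional Brownian motion $B$ of Remark~\ref{rmkConstruct} so that $L_t$ becomes a constant multiple $c_0\ell_t$ of the Brownian local time and $D_t^\delta$ becomes, up to the single straddling excursion (an $\CO(\delta)$ error after multiplication by $\delta$), a count of labelled excursions exceeding the edge-dependent height thresholds $\delta/\sqrt{a_j}$ --- is sound and is essentially the classical downcrossing representation of local time. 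The reduction, the excursion identification, and the closing monotonicity argument are all fine.

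The one step that is wrong as stated is the assertion that, \emph{conditional on $\ell_t$}, the count $D_t^\delta$ is Poisson with mean proportional to $\ell_t/\delta$. The excursion point process is Poisson over the \emph{local-time} axis: the number of excursions of height at least $h$ with local-time coordinate in $[0,u]$ is Poisson with mean $u\,n(\mathrm{height}\ge h)$ for \emph{deterministic} $u$, equivalently when one stops at the inverse local time $\tau_u$. But for fixed real time $t$ the level $\ell_t$ is a functional of the excursion lengths, so conditioning on it biases the heights (a large value of $\ell_t$ forces the excursions seen so far to be collectively short, hence typically low), and the conditional law of $D_t^\delta$ given $\ell_t$ is not Poisson. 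The moment identity you extract from it, $\Eb^O(\delta D_s^\delta - L_s)^2 = \CO(\delta\,\Eb^O L_s)$, is nevertheless true, and there are two standard repairs: either note that the compensated excursion-counting process $u\mapsto N_{\tau_u}-u\,n(\mathrm{height}\ge\delta)$ is a martingale whose square minus $u\,n(\mathrm{height}\ge\delta)$ is again a martingale, and apply optional stopping at the $u$-stopping time $\ell_t$; or prove the $L^2$ bound at the deterministic inverse local times $\tau_u$, where the Poisson structure is exact, and transfer to deterministic real times using the monotonicity of $s\mapsto \delta D_s^\delta$ and $s\mapsto L_s$ --- which is precisely the partition argument you already sketch at the end. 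With either repair the proof closes.
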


Lemma~\ref{lmaDowncrossing} is contained in in the proof of Lemma~2.2 in~\cite{freidlin2000sheu}, and we do not prove it here.
For clarity of presentation the proofs of Lemmas~\ref{lmaGoodSet}--\ref{lmaUpcrossingNegligible} are postponed until the proof of Theorem~\ref{thmMainR2} is complete.

The rest of the proof
is a standard ``triangle'' argument.
Fix $T>0$ and let $f$ be a uniformly continuous bounded functional on $\CC([0,T]; \Gb )$.
Pick any $\eta'>0$ and choose $\eta>0$ small enough such that 
\[
|f(\omega)-f(\omega')|<\eta'\qquad\textrm{whenever}\qquad\sup_{t\in[0,T]}d_{\Gb}(\omega(t),\omega'(t))< 4\eta.
\]

Note that the reconstruction map $\mathcal{R}_{\delta}: \CX^{\infty}\to \CC(\R_+,\Gb)$ is continuous with the choice of the metric \eqref{eq:exc_metric}. Since the restriction operator $\Pi_{[0,T]}: \CC(\R_+,\Gb)\to \CC([0,T],\Gb)$ is also continuous, Lemma \ref{cor:indlemma} implies that
\begin{equation}\label{eq:tri1}
\Ee^{\nu_\eps}f(\Pi_{[0,T]}\mathcal{R}_{\delta}\mathcal{P}_{\delta}(\hge(Z)))\;\xrightarrow{\epsilon \to 0}\; \Eb^{\hat\nu}f\left(\Pi_{[0,T]}\mathcal{R}_{\delta}\mathcal{P}_{\delta}(\Xi)\right)\;.
\end{equation}
On the other hand, Lemma \ref{lmaGoodSet} and Lemma \ref{lmaGoodBound} imply that we can find a $\delta>0$ such that for any small enough $\e>0$, we have
\begin{equation}\label{eq:tri2}
|\Ee^{\nu_\eps}f(\Pi_{[0,T]}\hge(Z))-\Ee^{\nu_\eps}f(\Pi_{[0,T]}\mathcal{R}_{\delta}\mathcal{P}_{\delta}(\hge(Z))|\leq 2\eta\norm{f}_{\infty}+\eta'.
\end{equation}

For the limiting process $\Xi$, standard results on the Brownian modulus of continuity, and Lemmas \ref{lmaUpcrossingNegligible}--\ref{lmaDowncrossing} imply that, by possibly making $\delta$ smaller, we have
\begin{equ}
\Pb^{\hat\nu} \bigl(\Xi\in \CF(\eta,r,T,\delta)\bigr) \ge 1-\eta\;.
\end{equ}
Together with Lemma \ref{lmaGoodSet}, this implies
\begin{equation}\label{eq:tri3}
|\Eb^{\hat\nu}f(\Pi_{[0,T]}\Xi)-\Eb^{\hat\nu}f(\Pi_{[0,T]}\mathcal{R}_{\delta}\mathcal{P}_{\delta}(\Xi)|\leq 2\eta\norm{f}_{\infty}+\eta'.
\end{equation}
Combining \eqref{eq:tri1}, \eqref{eq:tri2}, \eqref{eq:tri3} and noting that $\eta$ and $\eta'$ can be made arbitrarily small gives the convergence of 
$\Pi_{[0,T]}\hge(Z)$ in law to $\Pi_{[0,T]}\Xi$. Since $T>0$ was also arbitrary, this finishes the proof.
\end{proof}


It remains to prove Lemmas~\ref{lmaGoodSet}--\ref{lmaUpcrossingNegligible}.

\begin{proof}[Proof of Lemma~\ref{lmaGoodSet}]
For fixed small $\lambda>0$ and any $T>0$,  consider $\omega = (V,G) \in \CF(\eta,r,T,\delta)$. We
want to show that if $\delta$ and $\eta$ are sufficiently small, then,
writing $\omega^\delta = (V^\delta,G^\delta) := \Rde\CP_\delta\omega$, one
has $d_\Gb(\omega^\delta(t),\omega(t)) \le \lambda$ for $t \le T$.

For this, we first build the time change
\begin{equ}
f_\delta(t) = \edelta \bigl(\inf \{s\,:\, \edeltab(s) \ge t\}\bigr)\;.
\end{equ}
(On the range of $\edeltab$, this equals $\edelta(\edeltab^{-1}(t))$.)
The second property of $\CF$ then guarantees that  
\begin{equ}[e:propf]
|f_\delta(t) - t| \le r\;,\qquad \forall t \le T\;.
\end{equ}
It also follows from the constructions of $\Rde$ and $\CP_\delta$ that, for all $t$
in the range of $\edeltab$, one has
\begin{equ}[e:firstbound]
G^\delta(t) = G(f_\delta(t))\;,\qquad
V^\delta(t) = \sum_{n=1}^{\bar N_t^\delta} (V(\mu_{n}^\delta) - V(\kappa_{n-1}^\delta))\;. 
\end{equ}
We can rewrite the second identity as
\begin{equ}
V^\delta(t) = V(f_\delta(t)) - V(0) - \big(V(f_\delta(t)) - V(\mu_{\bar N_t^\delta}^\delta)\big) - \sum_{n=1}^{\bar N_t^\delta} \bigl(V(\kappa_{n-1}^\delta) - V(\mu_{n-1}^\delta)\bigr)\;.
\end{equ}
Since we have $f_\delta(t) \in [\mu_n^\delta, \kappa_n^\delta]$
for $n = \bar N_t^\delta$ by definition, we can combine this with the third
property of $\CF$, thus yielding the bound $|V^\delta(t) - V(f_\delta(t))|
\le (2+ \bar N_t^\delta) \delta^2 \le \eta$.
Together with the first equality in \eqref{e:firstbound} and the first property of $\CF$, 
this finally yields
\begin{equ}
d_{\Gb}(\omega(t), \omega^\delta(t)) \leq  d_{\Gb}(\omega(t), \omega( f_\delta(t)))  +|V^\delta(t) - V(f_\delta(t))| \le 2 \eta,
\end{equ}
for all times $t \le T$ belonging to the range of $\edeltab$.
It remains to consider times outside the range of $\edeltab$, which correspond
to the upcrossings. Write $t_0 < t$ for the start of the upcrossing, so that $|t-t_0| \le \delta^2 < r$
by definition. Then, one has
\begin{equ}
d_{\Gb}(\omega(t), \omega^\delta(t))
\le 
d_{\Gb}(\omega(t), \omega(t_0))
+ d_{\Gb}(\omega(t_0), \omega^\delta(t_0))
+ d_{\Gb}(\omega^\delta(t), \omega^\delta(t_0))
\le 4 \eta\;.
\end{equ}
This is because the first and last terms are bounded by $\eta$ as a consequence of the 
first and second properties of $\CF$, while the second term is bounded by $2 \eta$ from before. 
\end{proof}

Since Lemma~\ref{lmaUpcrossingNegligible} is used in the proof of Lemma~\ref{lmaGoodBound}, we prove it first.
\begin{proof}[Proof of Lemma~\ref{lmaUpcrossingNegligible}]
By the strong Markov property, the $T_{i,\delta}^u$ are independent and identically 
distributed under $\Pb^y$, while the $T_{i,\delta}^d$ are identically distributed, but not independent of the upcrossing durations in between.
However, when conditioned on the corresponding downcrossing taking place on edge $j$, 
they have the same distribution as the hitting time of the point $\delta/\sqrt{a_j}$ by a standard 
Brownian motion starting at the origin.

We note that for any $\lambda>0$ and $K \in \N$, Chebyshev's inequality implies
\[
\Pb^{y}\big(\bar D^{\delta}_{t}>K\big)=\Pb^y \Big(\sum_{i=0}^KT_{i,\delta}^d<t \Big)\leq e^{\lambda t}\Eb^ye^{-\lambda \sum_{i=0}^KT_{i,\delta}^d}=e^{\lambda t}\prod_{i=0}^K\Eb^ye^{-\lambda T_{i,\delta}^d},
\]
where we defined
$\bar D^{\delta}_{t}= D^{\delta}_{e_{\delta}(t)}$.
Let $j$ be the index of the slowest edge, that is $a_j=\min_{i=1,\dots,n} a_i$. Then, by the strong Markov property,
\[
\Eb^ye^{-\lambda T_{i,\delta}^d}\leq\Eb^{(j,\delta)} e^{-\lambda \tau_{0}^\delta}=e^{-\delta b_\lambda}\;,
\qquad b_\lambda = \sqrt{\frac{2\lambda}{a_j}}\;.
\]
Inserting this into the above yields 
\begin{equ}[eq:mean_D]
\Pb^{y}\big(\bar D^{\delta}_{t}>K\big)\leq \exp\big({\lambda t-b_\lambda K\delta}\big)\;.
\end{equ}
Next, writing
$e_{\delta}(t)-t=\sum_{i=1}^{\infty}\one_{i \le \bar D_{t}^{\delta}} T_{i,\delta}^u$,
we obtain from the Cauchy-Schwartz inequality,
\begin{equation}\label{eq:afterWald}
\Eb^y|e_{\delta}(t)-t|=\Eb^y(e_{\delta}(t)-t)=
\sum_{i = 1}^\infty \sqrt{\P(i\le \bar D^\delta_t)\Eb^y (T_{i,\delta}^u)^2}
\le 
C e^{\lambda t/2} \delta^2 \sum_{i = 1}^\infty e^{-\delta b_\lambda i/2}
\le 
C(t) \delta \;,
\end{equation}
where we used \eqref{eq:mean_D}, combined with the fact that $\Eb^y(T_{i,\delta}^u)^2=\CO(\delta^4)$ by the Brownian scaling.

To prove the second claim,  pick an $\eta>0$. By the monotonicity of $D_t^{\delta}$ in $t$ and since $| \bar N^{\delta}_{t}- \bar D^{\delta}_{t}| \leq 1$, 
\begin{equation}\label{eq:decompNminD}
\Eb^y|\delta(\bar N_t^{\delta}-D_t^{\delta})|\leq \delta+ \delta\Eb^y (D^{\delta}_{t+\eta}-D^{\delta}_t)
+\delta\Eb^y\big( \bar D^{\delta}_{t}\one_{\{e_{\delta}(t)-t>\eta\}}\big).
\end{equation}

The expectation 
$\delta \Eb^y (D^{\delta}_{t+\eta}-D^{\delta}_t)$ above can be estimated by comparing it to the local time.
Using the Markov property at time $t$ and Lemma~\ref{lmaDowncrossing}, we have
\[
\delta \bar{\E}^y (D^{\delta}_{t+\eta}-D^{\delta}_t)\leq\bar{\E}^y\bar{\E}^{Y(t)}L_{\eta}+o(1).
\]
The right hand side of the above can be made arbitrarily small by choosing $\eta$ small enough.

On the other hand, the Cauchy-Schwartz inequality implies that the 
last term on the right hand side of \eqref{eq:decompNminD} can be estimated from above by
\[
\delta  \sqrt{ \bar{\E}^y (\bar D^{\delta}_{t})^2\,\Pb^y(e_{\delta}(t)-t>\eta)}=o(1).
\]
Indeed, the probability converges to zero by \eqref{eq:afterWald} and Chebyshev's inequality, while the 
remaining factor can be bounded using \eqref{eq:mean_D}, thus concluding the proof.
\end{proof}

Finally, we turn to Lemma~\ref{lmaGoodBound}.
The proof relies on tightness (stated as Lemma~\ref{lmaTightness}, below) and the fact that upcrossing durations are negligible compared to the downcrossings durations (Lemma~\ref{lmaUpcrossingNegligible}).

\begin{lemma}\label{lmaTightness}
The law of $\hge(Z)$ under $\P^{\nu_\eps}$ is tight in $\CC(\R_+, \Gb)$. In particular, for every $T,\eta>0$, there is an $r>0$ and $\e_0>0$ such that for $\e\in (0,\e_0]$, we have
\[
\P^{\nu_\eps} \paren[\Big]{ \sup_{|t-s|\leq r\atop s,t\in[0,T]}d_{\Gb}(\hge(Z(t)),\hge(Z(s)))\geq \eta} < \eta
\]
\end{lemma}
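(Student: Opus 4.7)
The plan is to establish the stated modulus of continuity bound; together with tightness of the initial distributions (which is automatic after the standard reduction to $\nu_\eps$ supported on the fundamental domain~$\fd$ used at the start of the proof of Theorem~\ref{thmMainR2}), this will yield tightness in $\CC(\R_+, \Gb)$. Writing $\hge(Z) = (V_\eps, G_\eps)$ with $V_\eps = \eps^{(1-\alpha)/4} Z$ and $G_\eps = \Gammaeps \circ \pi \circ Z$, the product form of $d_\Gb$ reduces the problem to controlling each component separately. For the graph component $G_\eps$, tightness in $\CC(\R_+, \cG)$ is immediate from Theorem~\ref{thmAvgPrinShortTime}, since weak convergence to a process with continuous paths automatically furnishes a uniform modulus of continuity.

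The substantial work concerns $V_\eps$. Fix a small $\delta > 0$ and use the excursion decomposition based on the stopping times $\mu^{\eps,\delta}_n, \kappa^{\eps,\delta}_n$ from~\eqref{eqnMuDef}, with $D^\delta_t = \inf\{n : \kappa^{\eps,\delta}_n \geq t\}$. For $0 \leq s < t \leq T$ with $t-s \leq r$, setting $n_1 = D^\delta_s$ and $n_2 = D^\delta_t$, the difference $V_\eps(t) - V_\eps(s)$ splits into at most two incomplete-cycle boundary terms plus the sum of complete-cycle increments $\eps^{(1-\alpha)/4}\DeltaZ_k$ for indices $k$ running between $n_1$ and $n_2$. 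Each boundary term is controlled by~\eqref{extraeq} in Proposition~\ref{ppnCLTFirstHitShort} and can be made smaller than $\eta/4$ with probability at least $1-\eta/4$ provided $\delta$ is small and $\eps$ is below a corresponding threshold. For the middle sum, Proposition~\ref{ppnCLTFirstHitShort} asserts that each summand is asymptotically a centered $\R^2$-valued random variable of size $\sqrt{\delta}$, and successive summands are asymptotically independent by the strong Markov property. A Doob-type maximal inequality then bounds the sum by $C\sqrt{(n_2 - n_1)\delta}$ in probability, and Lemma~\ref{lmaDowncrossing} identifies $\delta(n_2 - n_1)$ with the local-time increment $L_t - L_s$ in the limit. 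Since $L$ is a constant multiple of Brownian local time at the origin (Remark~\ref{rmkConstruct}), it is H\"older continuous of any order $< 1/2$, so $\sqrt{L_t - L_s}$ can be made smaller than $\eta/4$ by taking $r$ sufficiently small. Passage from fixed endpoints $s,t$ to a supremum over $\{|t-s|\leq r\}$ is achieved by a standard chaining argument on a dyadic grid of mesh $\sim r$ in $[0,T]$, together with a union bound.

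The main obstacle is that Proposition~\ref{ppnCLTFirstHitShort} gives only \emph{weak} convergence of individual displacements $\eps^{(1-\alpha)/4}\DeltaZ_1$, whereas a maximal inequality applied to the random partial sums $\sum_{k=n_1+1}^{n_2}\eps^{(1-\alpha)/4}\DeltaZ_k$ requires a uniform-in-$\eps$ second-moment bound, together with a deterministic surrogate for the random cycle count $n_2 - n_1$. Securing this uniform integrability, and coupling it cleanly to the local-time approximation in Lemma~\ref{lmaDowncrossing} (so that the random index can be compared with $\delta^{-1}(L_t - L_s)$ up to a negligible error), is where the argument needs the most care; both ingredients should ultimately follow from the same hitting-time tail estimates near the separatrix that underlie the proof of Proposition~\ref{ppnCLTFirstHitShort} in Appendix~\ref{sxnCLTFirstHitShort}.
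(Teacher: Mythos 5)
Your skeleton is right — graph component from Theorem~\ref{thmAvgPrinShortTime}, excursion decomposition via the stopping times \eqref{eqnMuDef}, boundary terms via \eqref{extraeq}, a maximal inequality for the complete-cycle displacements, and Lemma~\ref{lmaDowncrossing} to control the excursion count — but the step you flag as "where the argument needs the most care" is in fact the crux, and you leave it unresolved. A uniform-in-$\eps$ second-moment bound for $\eps^{(1-\alpha)/4}\DeltaZ_k$ does not obviously follow from the hitting-time estimates of Appendix~\ref{sxnCLTFirstHitShort}, and no such bound is established anywhere in the paper. The paper sidesteps it entirely: it applies Doob's maximal inequality not to the prelimit displacements but to i.i.d.\ copies $R^\delta_m$ of the \emph{limiting} displacement $\sqrt{\delta\xi}\,\CN(0,Q)$, which has moments of all orders, obtaining $\P(\max_{1\le m\le k/\delta}|R^\delta_1+\cdots+R^\delta_m|>\eta/4)\le k^4\eta/4$. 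The event in question depends on finitely many coordinates of the excursion vector, so this probability bound transfers to $\max_{1\le m\le k/\delta}\eps^{(1-\alpha)/4}|\DeltaZ_1+\cdots+\DeltaZ_m|$ by the weak convergence in Lemma~\ref{cor:indlemma} alone — no uniform integrability is needed. This is the idea your proposal is missing; without it (or a genuine proof of the uniform moment bound you postulate) the argument does not close.

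Two smaller deviations are worth noting. First, rather than comparing the random index count $\delta(n_2-n_1)$ with the local-time increment $L_t-L_s$ and invoking H\"older continuity of $L$, the paper only needs a one-point tail bound: it shows $\sup_y\Pb^y(D^\delta_r\ge [r^{1/4}/\delta])\le \eta r/3$ using the Gaussian tails of $L_r/\sqrt r$ under $\Pb^O$, then transfers this to $\Pe^x(\kappa^{\delta,\eps}_\ell<r)$ via Theorem~\ref{thmAvgPrinShortTime}. This is simpler and avoids coupling a random index to $L$. Second, instead of a dyadic chaining argument, the paper reduces (by the strong Markov property and the $\cO(\eps)$ bound on displacement within a cell) to the single estimate $\Pe^x(\sup_{0\le t\le r}\eps^{(1-\alpha)/4}|Z_t|>\eta)\le r\eta$ uniformly over starting points $x\in\CL\cap\fd$; the factor $r$ on the right (arranged by choosing $k=r^{1/4}$ so the maximal-inequality bound is $k^4\eta=r\eta$) is exactly what makes the union over $\sim T/r$ blocks summable. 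These are cosmetic relative to the main gap above, but your write-up would need them made precise as well.
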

\begin{proof}
The tightness of the $\mathcal{G}$-component follows from Theorem~\ref{thmAvgPrinShortTime}, we only have to prove the tightness of the $\R^2$ component.
Using the strong Markov property and the fact that the displacement is bounded by~$\cO(\epsilon)$ as long as the process remains inside a cell, tightness of the~$\R^2$ component reduces to showing the following:
for every~$\eta > 0$ and $r \in (0, 1)$ sufficiently small, there exists $\epsilon_0 = \epsilon_0(\eta, r) > 0$ such that
\begin{equation} \label{teq1}
\Pe^x \Big( \sup_{0 \leq t \leq r}  | \eps^{\frac{1-\alpha}{4}}Z_t| > \eta \Big) \leq r \eta\;,
\end{equation}
for every $\eps \leq \eps_0$ and $x \in \CL \cap \fd$ (see also Theorem 18.17 in \cite{koralovsinai2007}).
We prove this below.

Let $R^\delta_1$, $R^\delta_2$, etc.\ be independent, distributed as $\sqrt{\delta \xi} \CN(0, Q)$.
Doob's maximal inequality then shows that there exists a constant $C$ such that
\[
\P\Big(l^{-1/2}\max_{1\leq m\leq l}|R_1^\delta +\cdots+R_m^\delta|> K\Big)
  \le \frac{C \delta^5}{K^{10}} \E \abs{R_1^\delta}^{10} \;,
\]
for all $K > 0$.
Choosing $K = \frac{1}{4} \eta  \sqrt{\delta/k}$, we see that for a given $\eta>0$, there exist $k_0 \in (0,1)$ and $\delta_1>0$ such that
\begin{equation} \label{tt01}
\P \Big(\max_{1 \leq m \leq k/\delta} |R^\delta_1+\cdots+R^\delta_m| > {\eta\over 4}\Big) \leq {k^4 \eta\over 4}\;,
\end{equation}
whenever $k\in (0,k_0)$ and $\delta \in (0,\delta_1)$. From \eqref{tt01} and Lemma~\ref{cor:indlemma}, it follows that there is $\eps_1(k,\delta) > 0$ such that
\begin{equation}\label{eq:tt02}
\Pe^x \Big(\max_{1 \leq m \leq k/\delta} \eps^{(1-\alpha)/4}|\DeltaZ_1+\cdots+\DeltaZ_m| > {\eta \over 3}\Big) 
\leq {k^4 \eta\over 3}\;,
\end{equation}
provided that $\eps \leq \eps_1(k,\delta)$.
Note that this estimate and those below are uniform in $x \in \CL \cap \fd$.
Combining  \eqref{eq:tt02} and \eqref{extraeq}, it now follows that there is $\eps_2(k,\delta) > 0$ such that
\begin{equation} \label{fr1}
\Pe^x \Big(\sup_{0 \leq t \leq \kappa^{\delta ,\eps}_{[k/\delta]}} \eps^{(1-\alpha)/4}|Z_t| > \eta/2\Big) \leq 
{k^4  \eta \over 2}\;.
\end{equation}
provided that $\eps \leq \eps_2(k,\delta)$.

By Lemma~\ref{lmaDowncrossing}, for a given $\eta > 0$,  we can find $r > 0$ and $\delta_2 = \delta_2(r) > 0$ such that, for any
$\delta \leq \delta_2$, and $\ell = [{r^{1/4}/ \delta}]$ we have
\begin{equation} \label{tt02}
\sup_{y \in \cG} \Pb^y(D_{r}^{\delta} \geq \ell) <\sup_{y\in \cG}\Pb^y(L_r\geq r^{1/4})+{\eta r\over 4}\leq r^2\Eb^O\Big({L_r\over r^{1/2}}\Big)^8+{\eta r\over 4} \leq {\eta r\over 3}.
\end{equation}
Here the second inequality follows from the Chebyshev inequality and the strong Markov property, while the last
inequality follows from the fact that the distribution of 
$L_r/r^{1/2}$ under $\Pb^O$ does not depend on $r > 0$ and has Gaussian tails. As a consequence of Theorem~\ref{thmAvgPrinShortTime}, there is $\eps_3(r, \delta)$ such that
if $\eps \leq \eps_3(r, \delta)$, and $x \in \CL$ we have
\begin{equation} \label{fr2}
\Pe^x\big( \kappa^{\delta,\eps}_{\ell} < r \big) \leq  \Pb^O\big(D_r^{\delta}\geq \ell\big)+{\eta r\over 6},
\end{equation}
and hence
\[
\Pe^x \Big( \sup_{0 \leq t \leq r}   \eps^{(1-\alpha)/4} | Z_t | > \eta \Big)\leq\Pe^x\big(\kappa_{\ell}^{\delta,\eps}<r\big)+\Pe^x\Big(\sup_{0\leq t\leq\kappa_{\ell}^{\delta,\eps}}\eps^{(1-\alpha)/4}|Z_t|>\eta\Big).
\]

Applying \eqref{fr1},~\eqref{tt02}, and~\eqref{fr2} with
\begin{equation*}
k = r^{1/4},
\qquad
\delta < \min(\delta_1, \delta_2)
\qquad\text{and}\qquad
\eps<\min(\eps_1(k, \delta),\eps_2(k,\delta), \eps_3(r,\delta)),
\end{equation*}
we obtain \eqref{teq1} as required.
\end{proof}

Finally we prove Lemma~\ref{lmaGoodBound}.

\begin{proof}[Proof of Lemma~\ref{lmaGoodBound}]
Fix $\eta > 0$ and $T > 0$. As a consequence of Lemma~\ref{lmaTightness},
we can find $\eps_0$ and $r>0$ such that the first property is satisfied with probability
at least $1-\eta$, uniformly over $\eps < \eps_0$.
By Lemma~\ref{lmaUpcrossingNegligible}, we then choose $\delta$ with $\delta^2 < r$ sufficiently small
so that the second estimate holds. The third bound immediately follows from the definitions as soon as $\eps^{1-\alpha \over 4} \le \delta^2$,
thus concluding the proof.
\end{proof}

\section{The averaging principle on the short time scales}\label{sxnShortTimeAveraging}

We now turn to the proof of Theorem~\ref{thmAvgPrinShortTime}.
For notational simplicity, we view~$Z$ itself as a process on the torus~$\torus$ and define set $Y^\eps_t=\Gammaeps(Z_t)$.
Let $\Psi \subset \mathcal{C}_0(\cG)$ be the dense subset consisting of all compactly supported functions that are continuously differentiable on each edge. 

The proof of Theorem \ref{thmAvgPrinShortTime} relies on the following two lemmas (compare with the result of Freidlin and Wentzell~\cite[Ch.\ 8, Lemma~3.1]{FreidlinWentzell12}).

\begin{lemma}\label{lei1}
  Let $\mathcal{A}$ be the operator on the domain $D(\mathcal{A})$ introduced in Section \ref{sxnMainResults} and  $\mathcal{D} \subset D( \mathcal{A})$ be the subset consisting of all the functions $f$ for which $ \mathcal{A}f \in \Psi$.
  For each $f\in \mathcal{D}$, $T>0$,  we have
  \begin{equation}\label{eq:marting_problw}
    \sup_{x\in \torus}\left|\Ee^x\left[f(Y^{\e}_T)-f(Y^{\e}_0)-\int_0^T\mathcal{A}f(Y^{\e}_t) \, dt\right]\right|\xrightarrow{\epsilon \to 0} 0.
  \end{equation}
\end{lemma}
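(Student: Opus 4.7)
The plan is to apply It\^o's formula inside each cell to the lift $F_\eps(x) \defeq f(\Gammaeps(x))$, which on $\overline U_i$ equals $f_i(\eps^{-\alpha/2}|H(x)|)$ with $f_i \defeq f|_{I_i}$. Since $\mathcal{A}f\in\Psi$ is compactly supported on each edge, each $f_i$ is $\CC^2_c(\R^+)$, so $F_\eps$ is smooth inside each cell and its support is confined to a tube $\{|H|\le C\eps^{\alpha/2}\}$ around $\CL_\torus$. The time-rescaled SDE reads
\[
dZ_t = \frac{\alpha|\log\eps|}{\eps^{1-\alpha}}\, v(Z_t)\,dt + \sqrt{\alpha\eps^\alpha|\log\eps|}\, dB_t,
\]
and the orthogonality $v\cdot\nabla H = 0$ eliminates the would-be large drift, leaving, in any $U_i$,
\[
dF_\eps(Z_t) = \sqrt{\alpha|\log\eps|}\,\sign(H)\,f_i'(\eps^{-\alpha/2}|H|)\,\nabla H\cdot dB_t + \frac{\alpha|\log\eps|}{2}|\nabla H|^2 f_i''(\eps^{-\alpha/2}|H|)\,dt + R_\eps\,dt,
\]
with $R_\eps = O(\eps^{\alpha/2}|\log\eps|)$ uniformly on $\mathrm{supp}(F_\eps)$. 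Taking expectations kills the martingale piece, so the proof reduces to showing that, uniformly in $x\in\torus$,
\[
\Ee^x \int_0^T \frac{\alpha|\log\eps|}{2}|\nabla H(Z_t)|^2 f_i''(\eps^{-\alpha/2}|H(Z_t)|)\,dt \longrightarrow \Ee^x \int_0^T \mathcal{A}f(Y^\eps_t)\,dt,
\]
together with vanishing boundary contributions that collectively reproduce the flux condition.

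The first main ingredient is a single-orbit averaging of $|\nabla H|^2$ along periodic orbits of $v$. For $Z$ on a level $\{|H|=h\}\cap U_i$ with $h = \eps^{\alpha/2}y$ and $y$ in the bounded support of $f_i''$, the flow $\phi$ has period $T_i(h)$ with invariant density $|\nabla H|^{-1}$, so the orbit-average of $|\nabla H|^2$ equals $\tfrac{1}{T_i(h)}\oint_{\{|H|=h\}\cap U_i}|\nabla H|\,dl$, which tends to $q_i/T_i(h)$ as $h\to 0$. Combining this with the definition $a_i = q_i\lim_{\eps\to 0}|\log\eps|/T_i(\eps^{1/2})$ and the standard logarithmic asymptotics $T_i(h)\sim c_i|\log h|$ near the hyperbolic saddles yields $\alpha|\log\eps|\cdot q_i / T_i(\eps^{\alpha/2}y)\to a_i$ uniformly in bounded $y>0$. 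A standard averaging argument, based on the observation that one orbit period corresponds to a span of only $O(\eps^{1-\alpha})$ on the new clock (negligible compared to $T$), then replaces $\alpha|\log\eps||\nabla H|^2$ by $a_i$ inside the integral on portions of trajectory that stay a distance $\delta>0$ from $\CL_\torus$ on $\cG$.

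The main obstacle is controlling the behaviour near the separatrix and recovering the flux condition $\sum_i q_i D_i f(O)=0$. I would introduce the buffer $\cV^\delta=\{y\le\delta\}\subset\cG$ (as in Section~\ref{sxnMainProof}) and decompose $[0,T]$ into excursions away from $\cV^\delta$ inside single cells, handled by the averaging step above, and sojourns inside $\cV^\delta$. Boundedness of $\mathcal{A}f$ bounds $\int_{\{Y^\eps_t\in\cV^\delta\}}\mathcal{A}f(Y^\eps_t)\,dt$ by $\|\mathcal{A}f\|_\infty$ times the occupation time of $\cV^\delta$, which vanishes uniformly in $x$ as $\delta\to 0$ by the estimates underlying Proposition~\ref{ppnCLTFirstHitShort}. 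The boundary terms accumulated on $\partial\cV^\delta$ by It\^o across many crossings sum, to leading order, to a constant multiple of $\sum_i q_i D_i f(O)$: Proposition~\ref{ppnCLTFirstHitShort} shows that upon exit from $\cV^\delta$ the process enters cell $I_i$ with probability proportional to $q_i$, so the weighted fluxes collapse onto exactly the combination annihilated by $f\in D(\mathcal{A})$. This delicate matching of cell-dependent boundary fluxes against the flux condition is the crux of the argument.

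Uniformity in $x\in\torus$ is preserved throughout. For $x$ with $|H(x)|\gg\eps^{\alpha/2}$, both $F_\eps(x)$ and $\mathcal{A}f(\Gammaeps(x))$ vanish by compact support, and on the intermediate time scale $Z$ cannot escape its cell, so the left-hand side of~\eqref{eq:marting_problw} is automatically $o(1)$. The remaining case reduces to $x$ in a shrinking neighbourhood of $\CL_\torus$, on which uniformity of the averaging and occupation-time estimates is guaranteed by compactness of $\torus$ and by the uniform logarithmic asymptotics of $T_i(h)$ near each saddle.
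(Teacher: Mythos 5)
Your overall strategy --- It\^o's formula applied to $f\circ\Gammaeps$ inside each cell, orbit-averaging of $|\nabla H|^2$ to produce $a_i$, and an excursion decomposition around a buffer $\cV^\delta$ with the flux condition cancelling the vertex contribution --- is the right heuristic and matches the paper's in spirit, but the step you yourself call ``the crux'' is where essentially all of the proof lives, and your treatment of it has a genuine gap. The accumulated vertex contribution over $[0,T]$ is an $O(1)$ quantity: each sojourn in $\cV^\delta$ that touches $\CL_\torus$ contributes an increment of $f$ of order $\delta$, there are order $1/\delta$ such sojourns, and each sojourn itself comprises a number of returns to the separatrix that diverges as $\eps\to 0$. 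This $O(1)$ sum cancels only if the edge into which the process leaves the separatrix region is distributed asymptotically proportionally to $q_i$. You invoke Proposition~\ref{ppnCLTFirstHitShort} for this, but that proposition only asserts convergence to \emph{some} weights $p_i$; the identification $p_i\propto q_i$ appears in the paper only as a remark that is explicitly \emph{not used}, and the justification offered for it (Corollary~2.4 of \cite{freidlin2000sheu} applied to the limit process $Y$) presupposes the averaging principle you are trying to prove --- so as written your argument is either circular or rests on an unproven input. A non-circular derivation of the $q_i$-proportionality requires identifying the stationary measure of the induced Markov chain on the separatrix and exploiting its uniform exponential mixing; for a \emph{fixed} starting point on $\CL_\torus$ the exit weights are not proportional to $q_i$ --- only their average against the invariant measure $\nu$ is. This is precisely the content of Lemmas~\ref{mixing} and~\ref{le1} and of estimates \eqref{est2a}--\eqref{est3a} in Lemma~\ref{inte}, none of which appear in your plan; the paper's proof is organised entirely around replacing pointwise starting conditions by $\nu$ and controlling the replacement error via the mixing bound \eqref{mixingeq}.

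A second, related gap is the error bookkeeping. The number of crossings of any intermediate level $\CL(\eps^\beta)$ before time $T$ diverges like $\eps^{-(\beta-\alpha/2)}$ (cf.\ \eqref{est12}), so the per-crossing defect must be shown to be $o(\eps^{\beta-\alpha/2})$ uniformly in the starting point where possible, and in the stationary average where not; ``a standard averaging argument'' and ``to leading order'' conceal exactly the quantitative inputs of Lemmas~\ref{lstNew}, \ref{trt} and \ref{nn3new} that make this balance close. Two smaller points: $f\circ\Gammaeps$ fails to be $\CC^1$ across $\CL$ unless every $D_if(O)$ vanishes, so It\^o's formula genuinely cannot be applied across the separatrix and the decomposition at the crossing times must be exact (the paper uses the stopping times $\sigma_n,\tau_n$ for this); and the uniform occupation-time bound for $\cV^\delta$ comes from the exit-time estimates of Lemma~\ref{trt}, not from Proposition~\ref{ppnCLTFirstHitShort}.
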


\begin{lemma}\label{tins}
  For each compact set $K\subset \cG$, the laws of the processes $\{Y^{\e}\}$ under the measures $\Pe^x$ for ${\e\in(0,1]}$ and $x \in \Gammaeps^{-1}(K) \subset \torus$ are tight.
\end{lemma}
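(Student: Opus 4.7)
The plan is to verify tightness of $\{Y^\eps\}$ on $\CC([0,T]; \cG)$ for each $T > 0$ by the standard two-part criterion: (a) compact containment, and (b) an Aldous modulus-of-continuity estimate applied to a family of test functions separating the points of $\cG$. Both ingredients will be extracted from Lemma~\ref{lei1} together with the strong Markov property of $Z$ on~$\torus$.

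\textbf{Aldous step.}
Select $f \in \mathcal{D}$ in such a way that $f^2$ also lies in $\mathcal{D}$; this can be arranged while keeping the collection dense in $\CC_0(\cG)$ by requiring $f$ to be constant in a neighbourhood of $O$, so that $D_i f(O) = 0$ and $\mathcal{A}(f^2) = a_i(D_i f)^2 + 2 f\,\mathcal{A}f$ automatically inherits the flux condition, compact support and continuity required for membership in $\Psi$. Applying Lemma~\ref{lei1} on the interval $[0,h]$ with an arbitrary initial point, together with the strong Markov property of $Z$ at any $\CF$-stopping time $\tau \le T$, yields
\[
  \sup_x\bigl|\Ee^x\bigl[f(Y^\eps_{\tau+h}) - f(Y^\eps_\tau) \bigm| \CF_\tau\bigr]\bigr|
    \le h\,\|\mathcal{A}f\|_\infty + \gamma_\eps(h),
\]
with $\gamma_\eps(h) \to 0$ as $\eps \to 0$ for each fixed $h$. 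The analogous bound for $f^2$ combines with the one above to give the second-moment estimate $\Ee^x\bigl[(f(Y^\eps_{\tau+h}) - f(Y^\eps_\tau))^2\bigr] \le Ch + o_\eps(1)$. Chebyshev then verifies Aldous's criterion, and continuity of the sample paths of $Y^\eps$ promotes tightness of $\{f(Y^\eps)\}$ from $D([0,T]; \R)$ to $\CC([0,T]; \R)$.

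\textbf{Compact containment.}
For $R \ge R_K \defeq \sup\{|y| : y \in K\}$, construct a cutoff $h_R \in \mathcal{D}$ satisfying $h_R \equiv 1$ on $\{|y| \le R\}$, $h_R \equiv 0$ on $\{|y| \ge 2R\}$, $D_i h_R(O) = 0$, and $\|\mathcal{A}h_R\|_\infty \le C R^{-2}$; concretely, take $h_R(i,y) = \chi((y-R)/R)$ on each edge for a standard smooth cutoff $\chi$. For any $t \le T$, Lemma~\ref{lei1} applied with $T$ replaced by $t$ yields the marginal estimate
\[
  \Ee^x h_R(Y^\eps_t) \ge h_R(Y^\eps_0) - \frac{C T}{R^2} - \gamma_\eps(T),
\]
uniformly over $x \in \Gammaeps^{-1}(K)$, hence $\Pe^x(|Y^\eps_t| \ge 2R) \le 2C T/R^2 + o_\eps(1)$. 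Evaluating this at a dyadic mesh of times and combining with the oscillation bound from the Aldous step applied to a countable family $\{f_n\} \subset \mathcal{D}$ separating points of $\cG$ (for instance, smooth bumps supported on a single edge), one concludes
\[
  \limsup_{\eps \to 0}\;\sup_{x \in \Gammaeps^{-1}(K)} \Pe^x\Bigl(\sup_{t \le T}|Y^\eps_t| \ge R\Bigr) \xrightarrow{R \to \infty} 0,
\]
which is the required compact containment condition.

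\textbf{Main obstacle.}
The principal technicality is the careful engineering of test-function families meeting all of the closure properties invoked above: $f^2 \in \mathcal{D}$ for the second-moment bound, compactly supported cutoffs $h_R$ with the correct flux and decay, and a countable subfamily of $\mathcal{D}$ separating points of $\cG$ so that the sample-path tightness on $(\cG, d_\cG)$ follows from the tightness of a suitable countable family of real-valued projections. Once these test functions are in place, the Aldous modulus estimate and the cutoff argument for compact containment are both direct consequences of Lemma~\ref{lei1} and the strong Markov property of~$Z$.
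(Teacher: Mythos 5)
Your argument takes a genuinely different route from the paper's. The paper proves Lemma~\ref{tins} ``by hand'': it samples $\Hep(Z_t)$ at the successive rotation times $S_n^\e$, verifies the hypotheses of the stopped Donsker theorem (Lemma~\ref{Donsker}) using the one-rotation moment estimates of Lemma~\ref{nn3new} and the rotation-time and level-deviation bounds of Lemma~\ref{lstNew}, extracts from \eqref{ttns} a uniform small-time oscillation estimate for $Y^\eps$ on each edge segment $K_i^1$, and then patches across the vertex with the strong Markov property. You instead treat Lemma~\ref{lei1} as the sole probabilistic input and run the standard martingale-problem tightness machinery (Aldous via $f$ and $f^2$, plus an Ethier--Kurtz-type bootstrap from a separating family). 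This is logically admissible, since the paper's proof of Lemma~\ref{lei1} nowhere uses Lemma~\ref{tins}, and it is arguably cleaner in that the hard estimates enter only through Lemma~\ref{lei1}. Your bookkeeping on the test-function class (closure under squaring for $f\in\mathcal D$ constant near $O$, the cutoffs $h_R$ with $\norm{\mathcal A h_R}_\infty \le C R^{-2}$, density and separation of points) checks out.

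The one place where you lean on more than Lemma~\ref{lei1} literally provides is the Aldous step: the criterion requires $\lim_{\delta\to0}\limsup_{\eps\to0}\sup_{h\le\delta}\sup_\tau\Pe^x(\cdot)=0$, so you need your error $\gamma_\eps(h)$ to vanish \emph{uniformly over} $h\in(0,\delta_0]$, whereas Lemma~\ref{lei1} is stated for a single fixed horizon $T$ and gives no control on how the rate of convergence degrades as $T=h$ varies. This is repairable---in the proof of Lemma~\ref{lei1} the $\eps$-thresholds depend on $T$ only through monotone quantities such as $e^{T}$ and the crossing-count bound, so the convergence is in fact uniform for $T$ in compact sets---but it is a strengthening that must be extracted from that proof rather than quoted. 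A similar, routine, amount of care is needed to upgrade your fixed-time containment bound $\Pe^{x}(\abs{Y^\eps_t}\ge 2R)\le CTR^{-2}+o_\eps(1)$ to the running-supremum form (your oscillation bounds are for compactly supported $f$, not for $\abs{Y^\eps}$ itself); alternatively, the marginal form already suffices if one invokes the Ethier--Kurtz criterion combining marginal tightness with relative compactness of $\set{f(Y^\eps)}$ for $f$ in a strongly separating algebra.
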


Momentarily postponing the proof of Lemmas~\ref{lei1} and~\ref{tins}, we use them to prove Theorem~\ref{thmAvgPrinShortTime}.
\begin{proof}[Proof of Theorem~\ref{thmAvgPrinShortTime}]
  Lemma~\ref{lei1}, the Markov property, and the time-independence of $\mathcal{A}$ ensure that any subsequential limit of $Y^\e$ solves the martingale problem with the operator $\mathcal{A}'=\mathcal{A}|_{\mathcal{D}}$.
  By the Ito formula, $Y$ is a solution of the martingale problem for $\mathcal{A}'$ and any initial measure $\mu$ on $\cG$.
  Since $\mathcal{D}$ is a core for $\mathcal{A}$ (as one can easily verify using \cite[Ch 1, Proposition 3.1]{EthierKurtz86}), Theorem~4.1 in~\cite[Ch 4]{EthierKurtz86} implies that $Y$ is actually the unique solution to the martingale problem for $\mathcal{A}$ with any initial measure $\mu$ on $\cG$. Moreover, it also follows that the subsequential limits are solutions of the martingale problem for $\mathcal{A}$ as well.
  Therefore any subsequential limit of $Y^{\e}$ must equal $Y$ and Lemma~\ref{tins} and hence Prokhorov's theorem imply the convergence of $Y^\e$ itself.
\end{proof}

The rest of the section is devoted to the proof of Lemma~\ref{lei1}, while the proofs of several auxiliary lemmas and of Lemma~\ref{tins} are relegated to the next section. 
Some elements in our proof are similar to those used in \cite{DolgopyatKoralov08,DolgopyatKoralov12}, 
where a different extension of the original averaging principle (Chapter~8 of \cite{FreidlinWentzell12}) was addressed.

For $h \geq 0$, we will write $\CL(h) =\{x\in \torus : |H(x)|=h\}$, so that $\CL_\torus = \CL(0)$.
Take a function $\beta = \beta(\eps) \in (\alpha/2, 1/2)$ such that
\begin{equation}\label{eq:choosinglevels}
\beta(\eps) - \frac{\alpha}{2} \rightarrow 0~~~\text{and}~~ \eps^{\beta(\eps) - \frac{\alpha}{2}} \rightarrow 0~~~\text{as}~~\e \downarrow 0.
\end{equation}
 Denote
$\Lbar = \CL( \eps^{\beta})$ and
let $\sigma$ be the first time when
the process $Z_t$ reaches~$\CL_\torus$ (this coincides with $\beta^{\eps}_0$ introduced earlier) and $\tau$ be the first time when it
reaches $\Lbar$. We inductively define the following
two sequences of stopping times.  Let $\sigma_1 = \sigma$. For $n \geq
1$ let $\tau_n$ be the first time following $\sigma_n$ when the
process reaches $\Lbar$. For $n \geq 2$ let $\sigma_n$ be the first
time following $\tau_{n-1}$ when the process reaches
$\CL_\torus$.

We can consider the following discrete time Markov chains $\xi^1_n
= Z_{\sigma_n}$ and $\xi^2_n = Z_{\tau_n}$
with state spaces $\CL_\torus$ and $\Lbar$,
respectively. Let $P_1(x, dy)$ and $P_2(x, dy)$ be transition
operators for the Markov chains $\xi^1_n$ and $\xi^2_n$,
respectively. It was then shown in \cite[Lem~2.3]{DolgopyatKoralov08} that they 
are uniformly exponentially mixing in the
following sense.
\begin{lemma} \label{mixing}
There exist constants $0 < c < 1$, $\eps_0 > 0$, $n_0 > 0$,
and probability measures $\nu$ and $\mu$ (which depend on
$\eps$) on $\CL_\torus$ and $\Lbar$, respectively,
such that for $\eps < \eps_0$ and $n \geq n_0$ we
have
\begin{equation} \label{mixingeq}
\sup_{x \in \CL_\torus} \|P_1^n(x, \cdot) - \nu\|_\TV
\leq c^n,~~~~\sup_{x \in \Lbar} \|P_2^n(x, \cdot) -
\mu\|_\TV \leq c^n,
\end{equation}
where $\norm{\cdot}_\TV$ denotes the total variation norm of a measure.
\end{lemma}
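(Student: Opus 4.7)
The plan is to derive \eqref{mixingeq} from a uniform-in-$\e$ Doeblin minorization: find $n_0 \in \N$, $c_0 \in (0,1)$ and probability measures $\lambda_1^\e$ on $\CL_\torus$ and $\lambda_2^\e$ on $\Lbar$ such that
\begin{equation*}
  P_1^{n_0}(x, \cdot) \ge c_0 \lambda_1^\e(\cdot) \;\; \forall\, x \in \CL_\torus\;, \qquad P_2^{n_0}(x, \cdot) \ge c_0 \lambda_2^\e(\cdot) \;\; \forall\, x \in \Lbar\;,
\end{equation*}
for every $\e \le \e_0$. Once this holds, a standard coupling argument (e.g.\ \cite[Ch~4]{EthierKurtz86}) automatically produces unique invariant measures $\nu^\e$, $\mu^\e$ and yields the geometric total-variation bound with $c = (1-c_0)^{1/n_0}$.

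I would first tackle $P_2$, since the starting set $\Lbar$ is at a controlled distance $\e^{\beta}$ from the saddles. A single step of $\xi^2_n$ takes the process along $\Lbar \to \CL_\torus \to \Lbar$. For the first leg, the constraint $\beta > \alpha/2$ imposed in \eqref{eq:choosinglevels} ensures that the transverse noise of $Z$ (of size $\sqrt{\alpha|\log\e|\,\e^\alpha}$) is strong enough relative to the crossing distance $\e^\beta$ that, in bounded real time, $\xi^2_n$ can reach any prescribed neighbourhood of a saddle $A_i$ with a density bounded below on a fixed subset of that neighbourhood (independent of $\e$). The second leg (escape from $\CL_\torus$ back to $\Lbar$) is driven by the hyperbolic instability at $A_i$: trajectories starting in a $\sqrt{\e^\alpha|\log\e|}$-neighbourhood of $A_i$ are exponentially stretched along the unstable manifold and reach $\Lbar$ with a distribution absolutely continuous with respect to arc length on a fixed sub-arc of $\Lbar$, again with an $\e$-uniform lower bound. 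Composing the two legs yields the minorization for $P_2$.

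For $P_1$ the structure is analogous: one step of $\xi^1_n$ is $\CL_\torus \to \Lbar \to \CL_\torus$. Starting at $x \in \CL_\torus$, the deterministic Hamiltonian flow carries $x$ along a heteroclinic orbit toward some saddle $A_i$ where, together with the $\sqrt{\e^\alpha|\log\e|}$-noise, $Z$ is ejected into one of the two adjacent cells; standard Freidlin--Wentzell exit distribution estimates give a density bound from below for $Z_{\tau_1}$ on a fixed sub-arc of $\Lbar$, uniformly in $x$. The return to $\CL_\torus$ is then handled by the first leg of the $P_2$ analysis above, and composing gives the minorization for $P_1$.

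The main obstacle is making the density estimates near the saddles uniform in $\e$. The natural approach is to linearise $v$ at each $A_i$ and rescale space by $\sqrt{\e^\alpha|\log\e|}$: in these local coordinates the SDE becomes an $\CO(1)$ perturbation of a hyperbolic linear flow driven by a standard Brownian motion, for which Gaussian-type lower bounds on the transition density on fixed compacts are classical. Propagating these local bounds through the global geometry (possibly several saddle visits within a single step), and gluing them with smoothness of the stochastic flow along the heteroclinic connections away from the saddles, is the bulk of the technical work and is precisely what is carried out in \cite[Lem~2.3]{DolgopyatKoralov08} (see also \cite{DolgopyatKoralov12}). The present setting differs from that reference only cosmetically: the extra time factor $\alpha|\log\e|/\e^{1-\alpha}$ changes real-time scales but leaves the qualitative spreading mechanism near saddles intact, so that the argument transfers with only bookkeeping adjustments.
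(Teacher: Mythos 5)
The paper does not actually prove this lemma: it is quoted verbatim as \cite[Lem.~2.3]{DolgopyatKoralov08}, so there is no in-paper argument to compare yours against. Your sketch is nonetheless the right mechanism and is consistent with how that reference (and the Doeblin-condition verification in Appendix~A of this paper, for the closely related chain $R^{x,\eps}_k$) proceeds: a uniform-in-$\eps$ minorization $P_i^{n_0}(x,\cdot)\ge c_0\lambda_i^\eps(\cdot)$ on a fixed sub-arc, obtained by pushing the process into an $\CO(\sqrt{\eps})$-neighbourhood of a saddle and using the hyperbolic stretching there to produce an exit law with an $\eps$-uniform density lower bound, followed by the standard coupling argument giving geometric total-variation convergence. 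Two small points to keep straight if you were to write this out: (i) the invariant measures $\nu,\mu$ in the statement are allowed to depend on $\eps$, which is exactly what your $\lambda_i^\eps$ accommodate, and is essential since $\Lbar=\CL(\eps^\beta)$ itself moves with $\eps$; (ii) the minorization-to-$c^n$ step produces a prefactor $(1-c_0)^{-1}$ for $n$ not a multiple of $n_0$, which is absorbed by requiring $n\ge n_0$ and slightly enlarging $c$, matching the form of \eqref{mixingeq}. The genuinely technical content — uniform Gaussian-type lower bounds for the rescaled process near the saddles and their propagation along the heteroclinic network — is, as you say, the content of \cite[Lem.~2.3]{DolgopyatKoralov08}, and the extra factor $\alpha\abs{\log\eps}/\eps^{1-\alpha}$ in the time change only rescales the real-time clock without altering that analysis.
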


We will need to control the number of excursions between $c$ and $\CL_\torus$ before time $T$.
This is our next lemma.
\begin{lemma} \label{le1}
There is a constant $r > 0$ such that for all sufficiently small
$\eps$ we have
\[
\sup_{x \in \Lbar} \Ee^x \exp({-\sigma}) \leq 1 - r
\eps^{\beta - \frac{\alpha}{2}}.
 \]
\end{lemma}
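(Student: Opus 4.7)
The plan is to reduce the claim to a lower bound on $\Pe^x(\sigma \ge 1)$. Indeed, the elementary inequality $e^{-s} \le 1 - (1-e^{-1})\one_{\{s\ge 1\}}$ gives $\Ee^x e^{-\sigma} \le 1 - (1-e^{-1})\Pe^x(\sigma \ge 1)$, so the lemma follows once we produce a constant $c>0$ with $\Pe^x(\sigma \ge 1) \ge c\,\eps^{\beta-\alpha/2}$, uniformly in $x \in \Lbar$ for all small $\eps$.

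Write $T(\eps) = \alpha|\log\eps|/\eps^{1-\alpha}$ for the time-rescaling factor in the definition of $Z$. Since $v = \grad^\perp H$ is tangent to the level sets of $H$, It\^o's formula applied to $H(Z_t)$ yields the semimartingale decomposition $H(Z_t) = H(Z_0) + M_t + R_t$, where $R_t = \tfrac{1}{2}T(\eps)\eps\int_0^t \Delta H(Z_s)\,ds$ is of bounded variation with $\sup_{t\le 1}|R_t| = \cO(\eps^\alpha|\log\eps|)$, and $M$ is a continuous martingale with $d\qv{M}_t = T(\eps)\eps\,|\grad H(Z_t)|^2\,dt$.  The central estimate to establish is
\[
  \Ee^x \qv{M}_1 \le C\,\eps^\alpha,\qquad\text{uniformly in }x \in \Lbar.
\]
Heuristically, at height $h \sim \eps^\beta$ the orbit period in physical time is $T_i(h) \sim |\log\eps|$, hence of order $\eps^{1-\alpha}$ in $Z$-time, so $Z$ traverses $\cO(\eps^{\alpha-1})$ full orbits in unit $Z$-time.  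An orbital-averaging argument should therefore replace $\int_0^1|\grad H(Z_s)|^2\,ds$ by its orbit time-average $q_i(h)/T_i(h) = \cO(1/|\log\eps|)$; multiplying by $T(\eps)\eps = \alpha|\log\eps|\eps^\alpha$ cancels the logarithm and produces the $\cO(\eps^\alpha)$ bound.

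Given this estimate, Markov's inequality yields $\Pe^x(\qv{M}_1 \le 2C\eps^\alpha) \ge 1/2$. By the Dambis-Dubins-Schwarz theorem, $M_t = B_{\qv{M}_t}$ for some standard Brownian motion $B$. Assume by symmetry that $H(Z_0) = \eps^\beta$; then $\{\sigma \ge 1\} \supseteq \{\inf_{t\le 1}(M_t+R_t) > -\eps^\beta\}$, and by \eqref{eq:choosinglevels} we have $\eps^\alpha|\log\eps| \ll \eps^\beta$ for small $\eps$, so it suffices that $\inf_{s \le 2C\eps^\alpha} B_s > -\eps^\beta/2$. The one-sided reflection principle gives
\[
  \P\!\left(\inf_{s \le 2C\eps^\alpha} B_s > -\tfrac{1}{2}\eps^\beta\right) = \mathrm{erf}\!\left(\frac{\eps^{\beta-\alpha/2}}{4\sqrt{C}}\right) \ge c'\,\eps^{\beta-\alpha/2},
\]
using $\mathrm{erf}(y) \sim 2y/\sqrt{\pi}$ as $y\to 0$. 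Combining these facts yields $\Pe^x(\sigma \ge 1) \ge \tfrac{1}{2}c'\,\eps^{\beta-\alpha/2}$, as required.

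The main obstacle is the sharp quadratic variation bound: the naive deterministic estimate $|\grad H|^2 \le \|\grad H\|_\infty^2$ only yields $\Ee^x\qv{M}_1 \le C\eps^\alpha|\log\eps|$, which would weaken the final conclusion to $\Pe^x(\sigma\ge1) \gtrsim \eps^{\beta-\alpha/2}/\sqrt{|\log\eps|}$, losing the logarithmic factor.  Recovering the cancellation likely proceeds by introducing a bounded function $G$ on $\torus$ (away from the saddles) satisfying $v\cdot\grad G = |\grad H|^2 - q_i(H)/T_i(H)$ in each cell, applying It\^o to $G(Z_t)$, and using $T_i(h) \sim |\log h|$ to control the boundary contribution near the saddles.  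The same cancellation is at the heart of the averaging principle (Theorem \ref{thmAvgPrinShortTime}) and is presumably carried out using auxiliary lemmas in the subsequent Section \ref{sxnShortTimeAveraging2}.
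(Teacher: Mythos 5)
Your reduction to the bound $\Pe^x(\sigma \geq 1) \geq c\,\eps^{\beta-\alpha/2}$ is correct, and the heuristic behind the quadratic-variation estimate $\Ee^x\qv{M}_1 \leq C\eps^{\alpha}$ (orbital averaging of $|\grad H|^2$ cancelling the logarithm) identifies the right mechanism -- it is essentially what estimate \eqref{ABC2} of Lemma~\ref{nn3new} provides. However, the final combination step does not close. Markov's inequality only gives $\Pe^x(\qv{M}_1 \leq 2C\eps^\alpha) \geq 1/2$, and you then multiply this with the reflection-principle estimate $\P(\inf_{s\le 2C\eps^\alpha}B_s > -\eps^\beta/2) \geq c'\eps^{\beta-\alpha/2}$ to conclude $\Pe^x(\sigma\ge1)\ge \tfrac12 c'\eps^{\beta-\alpha/2}$. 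These two events are not independent: the DDS Brownian motion $B$ is built from $M$, whose clock $\qv{M}$ is determined by the trajectory of $Z$. Without independence the only available bounds are $\P(A\cap B)\ge \P(B)-\P(A^c)$, and since $\P(B)=O(\eps^{\beta-\alpha/2})\to 0$ while $\P(A^c)$ is only controlled to be $\le 1/2$, this is vacuous. Repairing the argument along your lines would require either a tail bound $\Pe^x(\qv{M}_1 > 2C\eps^\alpha) = o(\eps^{\beta-\alpha/2})$ -- far beyond a first-moment estimate, and itself requiring control of the excursions of $|H(Z)|$ to heights much larger than $\eps^{\alpha/2}$ -- or control of $\qv{M}$ conditional on the event that $H(Z)$ stays positive; you address neither.

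The paper avoids this entanglement by splitting the event via the strong Markov property: first, the gambler's-ruin estimate $\Pe^x(\bar\tau(\eps^{\alpha/2}) < \bar\tau_0) \gtrsim \eps^{\beta-\alpha/2}$ for $x\in\Lbar$ (Lemma~\ref{trt}(\ref{trtB}), which rests on the harmonic-function computation of \cite{Koralov04} rather than on a time change), and second, the uniform bound $\sup_{y\in\CL(\eps^{\alpha/2})}\Ee^y e^{-\sigma}\le 1-2r$, obtained from the Donsker-type Lemma~\ref{Donsker} because exiting the annulus between $\CL(\eps^{\alpha/2}/2)$ and $\CL(2\eps^{\alpha/2})$ takes a time of order one with probability bounded away from zero. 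Conditioning at the hitting time of $\CL(\eps^{\alpha/2})$ decouples the small upcrossing probability from the order-one exit time, which is precisely the decoupling your argument is missing; the orbital-averaging input you identified is then only needed in the second step, where a constant-probability statement suffices.
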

The proof of this lemma is given in Section~\ref{sxnShortTimeAveraging2}.
Using the Markov property of the process and Lemma \ref{le1}, we
get the estimate
\begin{equation} \label{est12}
\sup_{x \in \torus} \Ee^x \exp({-\sigma_n}) \leq \sup_{x \in
\Lbar} \Ee^x \exp({-\sigma_{n-1}}) \leq (\sup_{x \in
\Lbar} \Ee^x \exp({-\sigma}))^{n-1} \leq ( 1 - r
\eps^{\beta - \frac{\alpha}{2}})^{n-1}.
\end{equation}
The next lemma, also proved in Section~\ref{sxnShortTimeAveraging2}, allows us to
estimate expressions of the type \eqref{eq:marting_problw} over the random intervals
$[0,\tau]$ and $[0,\sigma]$.
\begin{lemma} \label{inte}
For each $f \in \mathcal{D}$,  we have the following
asymptotic estimates
\begin{equs}[2]\label{est1a}
\sup_{x\in \torus}\left|\Ee^x\left[f(Y^{\e}_\sigma)-f(Y^{\e}_0)-\int_0^\sigma\mathcal{A}f(Y^{\e}_t) \, dt\right]\right|&\to 0~~&\text{as}~~\eps &\rightarrow 0,\\
\label{est1b}
\sup_{x\in \torus}\left|\Ee^x\left[f(Y^{\e}_\tau)-f(Y^{\e}_0)-\int_0^\tau\mathcal{A}f(Y^{\e}_t) \, dt\right]\right|&\to 0~~~~&\text{as}~~\eps &\rightarrow 0,\\
\label{est2a}
\sup_{x \in \Lbar} \left|\Ee^x\left[f(Y^{\e}_\sigma)-f(Y^{\e}_0)-\int_0^\sigma\mathcal{A}f(Y^{\e}_t) \, dt\right]\right| &=
o(\eps^{\beta - \frac{\alpha}{2}})~~~~&\text{as}~~\eps &\rightarrow 0,\\
\label{est3a}
\Ee^\nu\left[f(Y^{\e}_\tau)-f(Y^{\e}_0)-\int_0^\tau \mathcal{A}f(Y^{\e}_t) \, dt\right] &=
o(\eps^{\beta - \frac{\alpha}{2}})~~~~&\text{as}~~\eps &\rightarrow 0.
\end{equs}
Here~$\nu$ is the invariant measure on~$\CL_\torus$ given by Lemma~\ref{mixing}
\end{lemma}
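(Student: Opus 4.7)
Plan. The strategy is to apply Ito's formula to $f(Y^\e_t)$, then correct the resulting integrated generator of $Z$ to match $\int_0^t \mathcal{A} f(Y^\e_s)\,ds$ via an averaging corrector along the fast flow, and finally take expectations.

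Since $v\cdot\nabla H = 0$, iterating Ito's formula for $Y^\e_t = \e^{-\alpha/2}|H(Z_t)|$ and $f(Y^\e_t)$ gives, while $Z_t$ stays in a cell $U_i$,
\begin{equation*}
df(Y^\e_t) = \tfrac{\alpha|\log\e|}{2}|\nabla H(Z_t)|^2 D_i^2 f(Y^\e_t)\,dt + O(\e^{\alpha/2}|\log\e|)\,dt + dM^\e_t,
\end{equation*}
for a martingale $M^\e$. The task is to replace $\alpha|\log\e||\nabla H(Z_t)|^2$ by the effective coefficient $a_i$. Over one rotation of $v$ at level $h$, the time-average of $|\nabla H|^2$ equals $\bar m(h)/T_i(h)$ with $\bar m(h) \defeq \oint_{|H|=h\cap U_i}|\nabla H|\,dl \to q_i$ as $h\to 0$, and a direct computation with $T_i(h) \sim C|\log h|$ shows $\alpha|\log\e|\bar m(h)/T_i(h) \to a_i$ precisely on scales $h = \e^{\alpha/2 + o(1)}$, which motivates condition (\ref{eq:choosinglevels}). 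I would then construct a corrector $\psi_i$ on each cell, with zero mean on every orbit of $v$, solving
\begin{equation*}
v\cdot\nabla\psi_i(x) = \bar m(|H(x)|)/T_i(|H(x)|) - |\nabla H(x)|^2,
\end{equation*}
by integration along orbits (permissible because the right-hand side has zero orbit-average). Setting $\Phi_\e(x) \defeq \tfrac{\e^{1-\alpha}}{2}\psi_i(x) D_i^2 f(\Gammaeps(x))$, the leading-order contribution to $\mathcal{G}_\e\Phi_\e$ is precisely $\tfrac{\alpha|\log\e|}{2}(|\nabla H|^2 - \bar m/T_i)D_i^2 f$, where $\mathcal{G}_\e$ denotes the generator of $Z$; this cancels the averaging discrepancy. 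Applying Ito to $f(Y^\e_t)+\Phi_\e(Z_t)$ and rearranging yields
\begin{equation*}
f(Y^\e_\tau) - f(Y^\e_0) - \int_0^\tau \mathcal{A} f(Y^\e_t)\,dt = \Phi_\e(Z_\tau) - \Phi_\e(Z_0) + \int_0^\tau r_\e(Z_s)\,ds + \tilde M^\e_\tau,
\end{equation*}
with $\|r_\e\|_\infty \to 0$ uniformly on $\{|H|\ge \e^\beta\}$, and the same identity holds with $\sigma$ in place of $\tau$.

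Taking expectations kills $\tilde M^\e_\tau$, and the remaining two terms are controlled as follows. The prefactor $\e^{1-\alpha}$ together with the bound $\|\psi_i\|_\infty = O(|\log\e|)$ on $\{|H|\ge\e^\beta\}$ (the logarithmic growth reflecting saddle slow-down) yields $\|\Phi_\e\|_\infty = O(\e^{1-\alpha}|\log\e|)$, which is automatically $o(\e^{\beta-\alpha/2})$ since $1-\alpha > \beta-\alpha/2$ (equivalent to $\beta < 1-\alpha/2$, which holds because $\beta<1/2 \leq 1-\alpha/2$). The integral of $r_\e$ is bounded by $\|r_\e\|_\infty\,\Ee^x\tau$: for (\ref{est1a})-(\ref{est1b}) this gives $o(1)$ using the standard hitting-time bound $\Ee^x\tau = O(\e^{-c})$ together with $\|r_\e\|_\infty \to 0$ polynomially in $\e^\beta$; for (\ref{est2a})-(\ref{est3a}), starting from $\Lbar$ or from $\nu$, Brownian scaling gives $\Ee^x\sigma = O(\e^{2(\beta-\alpha/2)})$, which combined with $\|r_\e\|_\infty\to 0$ produces $o(\e^{\beta-\alpha/2})$. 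For the sharper bound (\ref{est3a}) the $\nu$-invariance of the return chain $\xi^1$ under Lemma~\ref{mixing} forces $\Ee^\nu[\Phi_\e(Z_0) - \Phi_\e(Z_\tau)]$ to cancel to leading order; for (\ref{est2a}), the exponential mixing of $\xi^2$ under $\Pe^x$ with $x\in\Lbar$ ensures that $Z_\sigma$ has distribution close to $\nu$ in total variation, giving the required $o(\e^{\beta-\alpha/2})$ rate.

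The main obstacle I anticipate is the construction and control of $\psi_i$ near the separatrix: orbits of $v$ approach the saddle points of $H$ and the period $T_i(h)$ diverges logarithmically, forcing $\|\psi_i\|_\infty$ to grow like $|\log\e|$ on $\{|H|\ge\e^\beta\}$. This is barely compatible with the $\e^{1-\alpha}$ prefactor that makes $\Phi_\e$ small, and the delicate balance in hypothesis (\ref{eq:choosinglevels}) ($\beta>\alpha/2$ strictly, but $\beta - \alpha/2\to 0$) is essential both to keep the averaging error $\|r_\e\|_\infty$ small at the cutoff level and to ensure the process does not spend substantial time at levels $|H|<\e^\beta$ where the corrector would fail to be bounded. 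The mixing estimates of Lemmas~\ref{mixing} and~\ref{le1} then provide the quantitative ingredients needed to upgrade the trivial bound $o(1)$ to the sharper $o(\e^{\beta-\alpha/2})$ required in (\ref{est2a})-(\ref{est3a}).
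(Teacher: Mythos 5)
Your corrector construction is a legitimate repackaging of the paper's one-rotation averaging estimate (Lemma~\ref{nn3new} is proved there by exactly the computation you sketch, organised over single rotations rather than via a transport corrector), and the crude bound ``remainder $\times$ expected occupation time'' is indeed how \eqref{est1a}--\eqref{est1b} are ultimately assembled. But there are two genuine gaps that your argument, as written, cannot close.

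First and most importantly, you never use the flux condition $\sum_{i}q_iD_if(O)=0$ from the definition of $D(\mathcal A)$, yet \eqref{est3a} is \emph{false} without it. The dominant contribution to $\Ee^\nu\bigl[f(Y^\e_\tau)-f(Y^\e_0)\bigr]$ is not a corrector boundary term of size $\CO(\e^{1-\alpha})$ (which is negligible compared to $\e^{\beta-\alpha/2}$ since $\beta-\alpha/2\to 0$); it is the first-order Taylor term $\e^{\beta-\alpha/2}\sum_i \Pe^\nu(Z_\tau\in U_i)\,D_if(O)$, which is genuinely of order $\e^{\beta-\alpha/2}$ for a generic smooth $f$ and becomes $o(\e^{\beta-\alpha/2})$ only because $\Pe^\nu(Z_\tau\in U_i)\to c\,q_i$ (the harmonic-measure computation quoted from Dolgopyat--Koralov) combines with the gluing condition — this is the paper's \eqref{eq:exitbetaglue}. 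Your proposed cancellation ``$\Ee^\nu[\Phi_\e(Z_0)-\Phi_\e(Z_\tau)]$ cancels by $\nu$-invariance'' addresses the wrong term ($Z_0\in\CL$ and $Z_\tau\in\Lbar$ live on different sets anyway, so invariance of the return chain gives no such identity). Relatedly, your justification of \eqref{est2a} via ``exponential mixing of $\xi^2$ implies $Z_\sigma$ is close to $\nu$ after one down-crossing'' is not what Lemma~\ref{mixing} provides — that is a statement about $n$-step transition kernels for large $n$. The paper instead obtains \eqref{est2a} by introducing an intermediate level $\e^{\beta'}$ and running a self-improving inequality of the form $|J|\le 2J_1+o(\e^{\beta-\alpha/2})$.

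Second, the It\^o formula for $f\circ\Gammaeps$ is only valid away from the separatrix: $f\circ\Gammaeps$ is not $C^1$ across $\CL$ unless all the $D_if(O)$ coincide, and for \eqref{est1b}, \eqref{est3a} the process crosses $\CL$ many times before reaching $\Lbar$. Your remainder control is asserted ``uniformly on $\{|H|\ge\e^\beta\}$'', but for $\sigma$ the entire journey ends \emph{inside} $\{|H|<\e^\beta\}$ at $\{H=0\}$, and there the one-rotation picture (and hence your $r_\e$ bound) breaks down because the noise dominates the drift. The paper handles this region not by averaging but by the exit-time and exit-probability estimates of Lemma~\ref{trt}, which yield only the crude bound $\CO(\e^{\beta-\alpha/2})$ in \eqref{tuyee3}; upgrading this to the statements of the lemma then requires the bootstrap $I_0\le I_1+I_2+I_3$ with $I_2\le\frac23 I_0$, for which your proposal has no substitute. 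Also note that your claimed bound $\Ee^x\sigma=\CO(\e^{2(\beta-\alpha/2)})$ for $x\in\Lbar$ ignores the event (of probability $\asymp\e^{\beta-\alpha/2}$) that the process escapes to level $\e^{\alpha/2}$ before reaching $\CL$, after which the return takes time of order one; the correct bound is $\CO(\e^{\beta-\alpha/2})$, which still suffices but only if the remainder is controlled on that longer excursion as well.
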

We prove Lemma \ref{lei1} by splitting the time interval $[0,T]$ into subsequent upcrossing and downcrossing periods. The first downcrossing from the general starting point is special and the contribution to \eqref{eq:marting_problw} is estimated using \eqref{est1a}. The estimate \eqref{est12} gives us sufficient control on the growth rate of the number of upcrossing-downcrossings, so that by the stronger estimates \eqref{est2a}, \eqref{est3a}, we can show that the contribution from these time intervals to \eqref{eq:marting_problw} is also negligible. In order to be able to use \eqref{est3a}, we will use Lemma \ref{mixing} to argue that after many such crossings, the section of the process on the separatrix can be approximated by its stationary counterpart. Finally, estimate \eqref{est1b} is used to show that the error thus introduced is negligible for small $\e$.
\begin{proof}[Proof of Lemma \ref{lei1}]
Let $f \in \mathcal{D}$, $T > 0$,
and $\eta > 0$ be fixed. We would like to show that the absolute
value of the left hand side of \eqref{eq:marting_problw} is less than $\eta$
for all sufficiently small positive $\eps$.

First, we replace the time interval $[0,T]$ by a  larger one, $[0,
\widetilde{\sigma}]$, where $\widetilde{\sigma}$ is the first of the
stopping times $\sigma_n$ that is greater than or equal to $T$,
that is
\[
 \widetilde{\sigma} = \sigma_{N+1}\;,\qquad  N = \max\{n: \sigma_n < T\}\;.
 \]
Using the Markov property of the process, the difference can be
rewritten as
\begin{equs}
\Big|\Ee^x&\Big[f(Y^{\e}_{\widetilde{\sigma}})-f(Y^{\e}_0)-\int_0^{\widetilde{\sigma}}\mathcal{A}f(Y^{\e}_t)dt\Big] -
\Ee^x\Big[f(Y^{\e}_T)-f(Y^{\e}_0)-\int_0^T\mathcal{A}f(Y^{\e}_t)dt\Big]\Big| \\
&= \Big|\Ee^x \Ee^{Z_{T}}
\Big[f(Y^{\e}_\sigma)-f(Y^{\e}_0)-\int_0^\sigma \mathcal{A}f(Y^{\e}_t)dt\Big]\Big|.
\end{equs}
Using~\eqref{est1a} we can ensure that the right hand side of the above is smaller than $\frac{\eta}{5}$ for all sufficiently small $\eps$.
Therefore, it remains to show that
\[
\left|\Ee^x\left[f(Y^{\e}_{\widetilde{\sigma}})-f(Y^{\e}_0)-\int_0^{\widetilde{\sigma}}\mathcal{A}f(Y^{\e}_t)dt\right]\right| < \frac{4\eta}{5}
\]
for all sufficiently small $\eps$.
 Using the stopping times $\tau_n$ and $\sigma_n$, we can rewrite
the expectation in the left hand side of this inequality as
\begin{equs}
\Ee^x&\left[f(Y^{\e}_{\widetilde{\sigma}})-f(Y^{\e}_0)-\int_0^{\widetilde{\sigma}}\mathcal{A}f(Y^{\e}_t)dt\right] =
\Ee^x\left[f(Y^{\e}_{{\sigma}})-f(Y^{\e}_0)-\int_0^{{\sigma}}\mathcal{A}f(Y^{\e}_t)dt\right] \\
&\qquad+
 \Ee^x\left( \sum_{n = 1}^{N} \Ee^{Z_{\sigma_n}} \left[f(Y^{\e}_{{\tau}})-f(Y^{\e}_0)-\int_0^{{\tau}}\mathcal{A}f(Y^{\e}_t)dt\right]  \right) \label{e:term1}\\
&\qquad+
\Ee^x\left( \sum_{n = 1}^{N}  \Ee^{Z_{\tau_n}} \left[f(Y^{\e}_{{\sigma}})-f(Y^{\e}_0)-\int_0^{{\sigma}}\mathcal{A}f(Y^{\e}_t)dt\right]  \right)\;,\label{e:term2}
\end{equs}
provided that the sums in the right hand side converge absolutely
(which follows from the arguments below). Due to \eqref{est1a},
the absolute value of the first term on the right hand side of
this equality can be made smaller than $\frac{\eta}{5}$ for all
sufficiently small $\eps$. Therefore, it remains to
estimate the two sums.

Let us start with the second sum~\eqref{e:term2}. Note that
\[
\Pe^x (\sigma_{n} < T ) = \Pe^x
\bigl(e^{-\sigma_{n}} > e^{-T} \bigr) \leq e^T \bigl( 1 - r
\eps^{\beta - \frac{\alpha}{2}}\bigr)^{n-1},
\]
where the last inequality follows from~\eqref{est12} and Chebyshev's inequality.
Taking the sum in $n$, we obtain
\[
\Ee^x N \leq \sum_{n =1}^\infty e^T \left( 1 - r
\eps^{\beta - \frac{\alpha}{2}}\right)^{n-1} \leq \frac{K}{\eps^{\beta - \frac{\alpha}{2}}},
\]
where the constant $K$ depends on $T$ and $r$. By Lemma
\ref{inte}, we can find $\eps_0 > 0$ such that for all
$\eps \in (0,  \eps_0)$ we have
\[
\sup_{x \in \Lbar} \left|\Ee^{x} \left[f(Y^{\e}_{{\sigma}})-f(Y^{\e}_0)-\int_0^{{\sigma}}\mathcal{A}f(Y^{\e}_t)dt\right] \right| \leq
\frac{\eta \eps^{\beta - \frac{\alpha}{2}}}{5 K}.
\]
Multiplying these two bounds it follows that, for $\eps < \eps_0$, the term \eqref{e:term2}
is bounded by $\eta / 5$.

Next, to estimate the term~\eqref{e:term1}, we first note that~\eqref{est3a} and the above argument shows
\begin{equation}\label{eqnTerm1tmp1}
\left| \Ee^x\sum_{n=1}^N \Ee^{\nu} \left[f(Y^{\e}_{{\tau}})-f(Y^{\e}_0)-\int_0^{{\tau}}\mathcal{A}f(Y^{\e}_t)dt\right]  \right| \leq \frac{\eta}{5}\;.
\end{equation}
The left hand side of this inequality, however, is not quite the term~\eqref{e:term1}, since the inner expectation is with respect to the invariant measure~$\nu$ rather than individual points.
(This limitation is due to~\eqref{est3a}.)
Thus, in view of~\eqref{eqnTerm1tmp1}, to estimate~\eqref{e:term1} we only need to bound
\[
  \abs*{ \Ee^x \sum_{n=1}^N (F(Z_{\sigma_n}) - \overline{F} ) },
\]
where
\[
F(x) \defeq \Ee^{x} \left[f(Y^{\e}_{{\tau}})-f(Y^{\e}_0)-\int_0^{{\tau}}\mathcal{A}f(Y^{\e}_t) \, dt\right]
\qquad\text{and}\qquad
\overline{F} \defeq \int_{\CL_\torus} F\, d \nu\;.
\]
Observe
\begin{equs}
\Big| \Ee^x\sum_{n=1}^N \big(F(Z_{\sigma_n}) - \overline{F} \big) \Big| &\leq
\Big|\sum_{n=1}^\infty \Ee^x\left( F(Z_{\sigma_n}) - \overline{F}  \right)\Big| +\Big| \Ee^x\sum_{n>N} \big(F(Z_{\sigma_n}) - \overline{F} \big) \Big|\\
&=
\Big|\sum_{n=1}^\infty \Ee^x\left( F(Z_{\sigma_n}) - \overline{F}  \right)\Big| +
\Big| \Ee^x\Ee^{Z({\tau_N})}\sum_{n=1}^\infty \big(F(Z_{\sigma_n}) - \overline{F} \big) \Big| \\
&\leq
2 \sup_{x \in \CL_\torus} | F(x) |
 \sum_{n=1}^\infty \sup_{x \in \CL_\torus}
\|P_1^{n-1}(x, \cdot) - \nu\|_\TV,
\end{equs}
which is smaller than $\frac{\eta}{5}$ for all sufficiently small
$\eps$ due to \eqref{mixingeq} and \eqref{est1b}.
Consequently the term~\eqref{e:term1} is bounded by $2\eta / 5$ when $\epsilon$ is sufficiently small.

Combining the above estimates, we see that the absolute value of the left
hand side of \eqref{eq:marting_problw} is less than $\eta$ for all sufficiently
small positive $\eps$. This completes the proof of Lemma~\ref{lei1}.
\end{proof}


\section{Proofs of lemmas used in Section~\texorpdfstring{\ref{sxnShortTimeAveraging}}{6}}\label{sxnShortTimeAveraging2}

In this section we prove Lemmas~\ref{tins}, \ref{le1}, and \ref{inte}.
We start with estimates on the transition times and transition probabilities between different level sets of $H$.  Recall that $\CL(h) =\{x\in \torus : |H(x)|=h\}$
and define $\CL_i(h) = \CL(h) \cap \overline{U}_i$.
For $h \geq 0$, let
\begin{equation*}
  \bar\tau_h
  = \bar\tau(h)
  \defeq \inf \set{t \geq 0 : Z_t \in \CL(h)}\;,
\end{equation*}
so in particular, $\bar\tau_0 = \sigma$ and $\bar\tau\paren{\e^\beta} = \tau$.
For $0 \leq h_1 \leq h_2$, let $U(h_1, h_2) = \{ x \in \torus: h_1 \leq |H(x)| \leq h_2 \}$ and $U_i(h_1, h_2) = U(h_1, h_2) \cap \overline{U}_i$.

In what follows, we take a more detailed look at the behaviour of $Z_t$ near the separatrix. Let $z_t = z_t^{\e}(x)$ be the deterministic process
\[
dz_t = \alpha {\eps^{\alpha -1} \abs{\log \e} } \, v(z_t) \,dt,
\quad z_0 = x.
\]
This is the same as the process $Z_t$ under $\Pe^x$, but 
with the stochastic term removed.
Let $\Teps = \Teps(x)$ be the time it takes for the process $z_t^\eps$,
starting at $x$, to make one rotation along the level set, i.e.,\ $\Teps(x) =\inf\{t > 0: z_t = x \}$.

\begin{lemma} \label{lstNew}
Suppose that $\lambda_1(\e), \lambda_2(\e)$ are such that $\e^{\lambda_1(\e)}, \e^{\lambda_2(\e)} = \CO(\e^{\alpha/2})$
and $\lambda_1(\e) \leq \lambda_2(\e) < 1/2-c $ for some $c > 0$.
\begin{enumerate}[\hspace{2ex}(a)]
\item\label{lstNewA} There are positive constants $c_1$ and $c_2$ such that $c_1
\eps^{1-\alpha} \leq \Teps(x) \leq c_2 \eps^{1-\alpha}$ for
all sufficiently small $\eps$ and all $x \in
U(\eps^{\lambda_2}, \e^{\lambda_1})$. Moreover, there are constants $\overline{c}_i>0$ such that if $\alpha' > 0$ and $\lambda'_1(\e),\lambda'_2(\e)\to\alpha'/2$ as $\e\downarrow 0$, then $\eps^{-(1-\alpha)}\Teps(x)\to \overline{c}_i\alpha'/\alpha$ as $\eps\downarrow 0$ uniformly in $x\in U_i(\eps^{\lambda'_2}, \e^{\lambda'_1})$.

\item\label{lstNewB}
  For each $\delta > 0$, $R > 0$, and all sufficiently small
$\eps$ we have
\begin{equation} \label{dspn}
\Pe^x\Big(\sup_{t \leq T(x)}|H(Z_t) - H(x)| >
\eps^{\frac{1}{2}-\delta} \Big) < \eps^R~~~{\it
for}~~{\it all}~~x \in U(\e^{\lambda_2(\e)}, \e^{\lambda_1(\e)}).
\end{equation}

\item\label{lstNewC}
  For each $\delta>0$, $R > 0$, and all sufficiently small
$\eps$ we have
\[
\Pe^x\Big(\sup_{t \leq T(x)}|Z_t - z^{\e}_t(x)| >
\eps^{\frac{1}{2}-\lambda_2(\e)-\delta} \Big) < \eps^R~~~{\it
for}~~{\it all}~~x \in  U(\e^{\lambda_2(\e)}, \e^{\lambda_1(\e)}).
\]
\end{enumerate}
\end{lemma}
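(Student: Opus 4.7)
\textbf{Part (a)} is a direct computation. Since $z_t^\epsilon$ is the flow of $v$ sped up by $\alpha|\log\epsilon|/\epsilon^{1-\alpha}$, a change of variables yields $T^\epsilon(x) = \epsilon^{1-\alpha} T_i(|H(x)|)/(\alpha|\log\epsilon|)$ for $x \in U_i$, with $T_i$ as in \eqref{eqnQiTi}. The non-degeneracy of the saddles together with standard linearization at each saddle on $\partial U_i$ would give the asymptotics $T_i(h) = \bar c_i|\log h| + O(1)$ as $h \downarrow 0$, with $\bar c_i = \sum_{A \in \partial U_i} \lambda_A^{-1}$ and $\pm \lambda_A$ the saddle eigenvalues. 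For $|H(x)|\in[\epsilon^{\lambda_2},\epsilon^{\lambda_1}]$ with both $\epsilon^{\lambda_i} = O(\epsilon^{\alpha/2})$, this forces $T_i(|H(x)|) = \Theta(|\log\epsilon|)$ uniformly, giving the two-sided bound $c_1\epsilon^{1-\alpha}\le T^\epsilon(x)\le c_2\epsilon^{1-\alpha}$, and the refinement with $\lambda_i'\to \alpha'/2$ is immediate.

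\textbf{Part (b)} I would attack via It\^o's formula applied to $s\mapsto H(\tilde X_s)$. The decisive observation is that $v\cdot\nabla H \equiv 0$ since $v = \nabla^\perp H$, so
\[
  H(\tilde X_s) - H(x) = \sqrt{\epsilon}\int_0^s \nabla H(\tilde X_u)\cdot dW_u + \frac{\epsilon}{2}\int_0^s \Delta H(\tilde X_u)\,du.
\]
Translating $t\le T^\epsilon(x)$ into the original time via $s = \alpha|\log\epsilon|t/\epsilon^{1-\alpha}$ gives $s\le C|\log\epsilon|$ by part (a). The drift is then deterministically $O(\epsilon|\log\epsilon|)\ll \epsilon^{1/2-\delta}$, and the martingale has quadratic variation $O(\epsilon|\log\epsilon|)$. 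The exponential martingale inequality then yields
\[
  \Pe^x\paren[\Big]{\sup_{t\le T^\epsilon(x)}|H(Z_t)-H(x)| > \epsilon^{1/2-\delta}} \le 2\exp\paren[\Big]{-\frac{c\,\epsilon^{-2\delta}}{|\log\epsilon|}},
\]
which beats any $\epsilon^R$ for $\epsilon$ small, uniformly in $x$ in the admissible annulus.

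\textbf{Part (c)} is the subtle one. Rewriting \eqref{eqnSmallnoiseeq1} in the rescaled time and subtracting the equation for $z_t^\epsilon$ gives
\[
  d(Z_t - z_t^\epsilon) = \alpha|\log\epsilon|\epsilon^{\alpha-1}\paren[\big]{v(Z_t) - v(z_t^\epsilon)}\,dt + \sqrt{\alpha|\log\epsilon|\epsilon^\alpha}\,d\tilde W_t.
\]
Working on the event of part (b), both processes lie in a thin tube about the level set $\{|H|=|H(x)|\}$. Linearizing $v$ along $z_t^\epsilon$ and applying a stochastic Gronwall inequality, one controls $|Z_t - z_t^\epsilon|$ by a Gaussian martingale of standard deviation $O(\sqrt{\epsilon|\log\epsilon|})$ multiplied by the amplification factor $\exp\int_0^{T^\epsilon(x)} \alpha|\log\epsilon|\epsilon^{\alpha-1}\|Dv(z_s^\epsilon)\|\,ds$. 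Since the tube remains at distance $\gtrsim \epsilon^{\lambda_2/2}$ from each saddle, the time spent in the linearizing neighborhood of saddle $A$ is $\sim \lambda_A^{-1}\log(\epsilon^{-\lambda_2})$, over which $\|Dv\|\lesssim \lambda_A$, so this passage contributes a Gronwall factor of order $\epsilon^{-\lambda_2}$; the far-from-saddle portion contributes $O(1)$. Combining, $|Z_t - z_t^\epsilon| = O(\epsilon^{1/2-\lambda_2-\delta/2})$ with probability at least $1-\exp(-c\epsilon^{-\delta}/|\log\epsilon|)$, as required.

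The principal obstacle is the Gronwall analysis in part (c): one must show that the hyperbolic expansion matched against the passage time near each saddle gives precisely the factor $\epsilon^{-\lambda_2}$ and no worse. Rigorously, this would require partitioning $[0,T^\epsilon(x)]$ into near-saddle passages (handled in local linearizing coordinates where $H$ is quadratic) and transit segments in compact regions where $\|Dv\|$ is uniformly bounded, and iterating the Gronwall bound across this partition. The constraint $\lambda_2<1/2-c$ ensures that the resulting error $\epsilon^{1/2-\lambda_2-\delta}$ still tends to zero, which is what makes the bound meaningful.
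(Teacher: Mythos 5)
Your sketch follows the same route the paper takes: the paper does not prove Lemma~\ref{lstNew} in full either, but gives exactly this outline --- part (a) via the explicit time change $\Teps(x)=\eps^{1-\alpha}T_i(|H(x)|)/(\alpha\abs{\log\eps})$ together with the logarithmic divergence of $T_i(h)$, part (b) as a martingale large-deviation estimate for $H(Z_t)$ exploiting $v\cdot\nabla H=0$, and part (c) as that same estimate combined with the deterministic separation factor $\eps^{-\lambda_2}$ accumulated during saddle passages --- and then defers the rigorous details to Lemma~3.3 of \cite{DolgopyatKoralov08}. The one point the paper makes explicit and you only implicitly absorb in your ``local linearizing coordinates'' step is that the saddle eigenvalues $\pm\lambda_A$ are resonant, so exact smooth linearization is unavailable and a normal-forms argument is required; this is precisely why the separation estimate carries the extra $\eps^{-\delta}$ loss rather than being sharp at $\eps^{-\lambda_2}$.
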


Statement~(\ref{lstNewA}) of the above lemma follows from a direct computation for the deterministic process, and the fact that
\[
\overline{c}_i = \lim_{\e\downarrow 0}\frac{T_i(\e^{1/2})}{\abs{\log\epsilon}},
\]
where the limit is the same arising in~\eqref{eq:shorttimecoeff}.
Statement (\ref{lstNewB}) of the above lemma is basically a large deviation estimate on probability of the stochastic process to 
move transversal to the level-sets of $H$. Statement (\ref{lstNewC}) is  a large deviation estimate plus the fact that purely deterministic flow may separate points by an amount
$\e^{-\lambda_2  -\delta}$,
if both points are outside the boundary layer  $|H(x)| \leq \e^{\lambda_2}$.
Heuristic explanation for this deterministic separation is as follows.  The rotation time $T$ is of the same order as the time needed to pass the neighbourhood of a saddle and
the largest separation also occurs near saddles. 
Let us analyse 
the linearised system $\dot{x}=x$, $\dot{y}=-y$.  For the linearised system the particle trajectories 
and separation between particles behave as the $x$-component given by $x=x_0 e^t$. The time to pass the neighbourhood of a saddle 
if found from $x_0 e^t=\CO(1)$. Thus the rotation time $T = - C \log x_0$, and separation after one rotation is  $\CO(e^{T})=c/x_0 \leq \CO(\e^{-\lambda_2})$. In order to claim 
the separation in the nonlinear system occurs at the same rate, one needs to use the normal forms argument (see e.g.~\cite{BakhtinAlmada2011}), because eigenvalues of 
the linearised system are resonant. Therefore we only obtain an estimate $\e^{-\lambda_2  -\delta}$ with the linear approximation argument.
The rigorous proof of Lemma~\ref{lstNew} is identical to that of Lemma 3.3  in~\cite{DolgopyatKoralov08}, and we do not repeat it here.

\begin{lemma} \label{trt} Suppose that $\lambda_1(\e)\leq \lambda_2(\e)$ are such that $\e^{\lambda_1(\e)}, \e^{\lambda_2(\e)} = \CO(\e^{\alpha/2})$, $\lambda_1(\e), \lambda_2(\e) < 1/2 - \kappa$ for some $\kappa > 0$, and $(\lambda_2(\e)-\lambda_1(\e))\abs{\log\epsilon}\to \infty$ as $\e\downarrow 0$. Then

\begin{enumerate}[\hspace{2ex}(a)]
\item\label{trtA} When $\lambda_1(\e)-\lambda_2(\e)\to 0$ as $\e \downarrow 0$,  we have the following upper bound on the expected exit time from a channel.
\[
\sup_{x \in U(0, \e^{\lambda_2(\e)} )} \Ee^{x} (\bar\tau\paren{\e^{\lambda_1(\e)}} \wedge \bar\tau_0) = \CO(\e^{\lambda_2(\e) + \lambda_1(\e) - \alpha})~~{\it as}~\e \downarrow 0.
\]
\item\label{trtB}
  If $\lambda_2(\e) \leq 2\lambda_1(\e) -c $ for some $c > 0$,  the asymptotic behaviour of the exit probabilities is given by
\[
\Pe^{x}(\bar\tau\paren{\e^{\lambda_1(\e)}} < \bar\tau_0) \sim \e^{\lambda_2(\e) - \lambda_1(\e)}~~{\it as}~\e \downarrow 0~~{\it uniformly}~{\it in}~~x \in \CL(\e^{\lambda_2(\e)}).
\]
\item\label{trtC} The asymptotic expected exit time from a two sided channel satisfies
\[
\sup_{x \in U(0, \e^{\lambda_1(\e)} )} \Ee^{x} \bar\tau\paren{\e^{\lambda_1(\e)}} = \CO(\e^{ 2 \lambda_1(\e)  - \alpha})~~{\it as}~\e \downarrow 0.
\]
\item\label{trtD} There is a constant $c>0$ such that
\[
\sup_{x\in\CL(\e^{\lambda_2(\e)})}\Ee^{x} (\bar\tau\paren{\e^{\lambda_1(\e)}} \wedge \bar\tau_{0})>c\e^{\lambda_2(\e)+\lambda_1(\e)-\alpha}.
\]
\end{enumerate}
%
%
\end{lemma}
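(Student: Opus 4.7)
The plan is to reduce each of the four statements to a classical exit time or exit probability computation for a one-dimensional diffusion, after identifying the rescaled height $y(t) \defeq \eps^{-\alpha/2}|H(Z_t)|$ with an effective process on an edge of the rescaled Reeb graph $\cG$. Applying It\^o's formula to $H(Z_t)$ and using $v = \nabla^\perp H$ (so that the leading $\CO(\alpha|\log\eps|\eps^{\alpha-1})$ drift drops out), we obtain
\begin{equation*}
  dH(Z_t) = \sqrt{\alpha|\log\eps|\eps^\alpha}\, \nabla H(Z_t) \cdot d\tilde W_t
    + \tfrac{1}{2}\alpha|\log\eps|\eps^\alpha\, \Delta H(Z_t)\, dt\;,
\end{equation*}
so that $H(Z_t)$ is a semimartingale whose quadratic variation grows at rate $\alpha|\log\eps|\eps^\alpha|\nabla H(Z_t)|^2$.

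The next step is to average $|\nabla H|^2$ along the periodic orbit of $v$ at the current level set. The deterministic flow dominates between visits to the separatrix, so Lemma~\ref{lstNew} provides simultaneous control on the rotation period $\Teps(x) = \CO(\eps^{1-\alpha})$ and on the transverse displacement $|Z_t - z_t^\eps|$ per rotation. This lets us replace the time integral of $|\nabla H|^2$ over one rotation by its exact orbit average $q_i(h)/T_i(h)$, with errors that are power-law small in $\eps$. Combined with \eqref{eqnQiTi} and \eqref{eq:shorttimecoeff}, the resulting time-averaged variance rate of $y$ stays bounded above and below by positive constants (of order $a_i$) throughout the range of heights we consider, while the drift contribution from $\Delta H$ is of strictly smaller order. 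Consequently, until the exit time, $y$ can be coupled to a one-dimensional diffusion with positive and bounded variance coefficient, with coupling errors well-controlled by Lemma~\ref{lstNew}.

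Given this coupling, each of the four statements follows from a standard exit computation for a one-dimensional diffusion on $[0,L]$ with bounded diffusion coefficient. Starting at $y_0$, the expected exit time is $\Theta(y_0(L-y_0))$ and the probability of exiting at $L$ equals exactly $y_0/L$, independent of the (nondegenerate) diffusion coefficient. Setting $y_0 \le \eps^{\lambda_2-\alpha/2}$ and $L = \eps^{\lambda_1-\alpha/2}$, part~(\ref{trtA}) becomes the upper bound $y_0 L = \eps^{\lambda_1+\lambda_2-\alpha}$. Part~(\ref{trtB}) becomes $y_0/L = \eps^{\lambda_2-\lambda_1}$, where the hypothesis $\lambda_2 \le 2\lambda_1 - c$ guarantees that the coupling error is of strictly smaller order than this leading term. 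Part~(\ref{trtC}) is exit from a symmetric $d_\cG$-ball of radius $L$, bounded by $\CO(L^2) = \CO(\eps^{2\lambda_1-\alpha})$. For part~(\ref{trtD}), the hypothesis $(\lambda_2-\lambda_1)|\log\eps| \to \infty$ forces $y_0/L \to 0$, so that $y_0(L-y_0) \ge y_0 L/2$, giving the lower bound $c\eps^{\lambda_1+\lambda_2-\alpha}$.

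The main technical obstacle is making the averaging rigorous on these shortened time scales, where the classical Freidlin-Wentzell averaging principle is not directly applicable. The argument mirrors that of \cite[Lem.~3.3]{DolgopyatKoralov08}, adapted to the rescaling $|\log\eps|/\eps^{1-\alpha}$. The crucial observation is that the logarithmic divergence of $T_i(h)$ near the saddles is compensated exactly by the $|\log\eps|$ factor in the rescaling that defines $Z$, keeping $a_i$ finite and nonzero and placing all four statements on a common effective scale. One then iterates the single-rotation comparison up to the exit time, using the quantitative bounds in Lemma~\ref{lstNew} to keep errors uniformly small in the starting point $x$.
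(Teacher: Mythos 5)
Your overall strategy coincides with the paper's: the proof there also rests on the facts that $H(Z_t)$ is a martingale up to the lower-order $\Delta H$ drift, that its quadratic variation accumulates at a rate controlled by the orbit average of $\abs{\nabla H}^2$ via the per-rotation estimates of Lemma~\ref{lstNew}, and that the answer is then read off from one-dimensional exit formulas. Indeed, the paper's comparison function $\tilde u^\eps(x)=\e^{-\alpha}(H(x)-\e^{\lambda_3})(\e^{\lambda_1}-H(x))$ is exactly your $y_0(L-y_0)$, the quantity $I(x,\bbtau,\e)$ it controls is exactly your accumulated quadratic variation, and parts~(\ref{trtB}) and~(\ref{trtC}) are obtained by quoting Lemmas~4.2 and~4.3 of \cite{Koralov04} (our \eqref{jjii} and \eqref{eq:lem43Kor}), which encode precisely your ``exit probability $=y_0/L$ plus a drift error'' and ``two-sided exit time $=\CO(L^2)$'' claims. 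The paper does not build a literal coupling; it runs the same computation through Feynman--Kac with the explicit comparison function, which is a presentational difference only.

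The one place where your sketch has a real hole is the assertion that the rescaled height can be coupled to a nondegenerate one-dimensional diffusion ``until the exit time, with coupling errors well-controlled by Lemma~\ref{lstNew}.'' Lemma~\ref{lstNew} requires the process to stay in $U(\e^{\lambda_2},\e^{\lambda_1})$ with $\lambda_2<1/2-c$; inside the layer $\abs{H}\lesssim \e^{1/2-\kappa}$ the per-rotation transverse spread $\e^{1/2-\lambda_2-\delta}$ is comparable to the distance to the separatrix, the process crosses saddles and changes cells, and $\abs{H(Z_t)}$ acquires reflection effects at $0$, so the ``one rotation $\approx$ one deterministic orbit'' averaging breaks down exactly where statements~(\ref{trtA}), (\ref{trtC}) and the hitting of $\bar\tau_0$ live. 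The paper does not push the rotation argument into that layer: it excises $U_i(0,2\e^{\lambda_3})$ with $\lambda_3=1/2-\kappa$, proves the two-sided bounds only on $U_i(2\e^{\lambda_3},\e^{\lambda_2})$, and then handles the inner layer separately using the imported estimate \eqref{jjii} together with the iteration \eqref{eq:smallstep} (excursions from $\CL_i(\e^{\lambda_3})$ either reach the separatrix in expected time $\CO(\e^{1-2\kappa_1-\alpha})$, which is negligible against $\e^{\lambda_1+\lambda_2-\alpha}$, or escape to $\CL_i(\e^{1/2-\kappa_1})$ with small probability). Your argument needs an analogous patch for that layer; as written, the claimed coupling is asserted in a regime where the tool you invoke to justify it does not apply.
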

\begin{proof}
Without loss of generality, we may assume that the initial point $x$ belongs to  $U_i$, where $U_i$ has the property that $H(x) \geq 0$ for all $x \in U_i$. Recall from Lemma 4.2 (after applying an appropriate time change) in \cite{Koralov04} that
\begin{equation} \label{jjii}
\sup_{x\in U_i(0,h)}\Ee^{x}\bar{\tau}(h)\leq c\e^{-\alpha}h^2\;,
\end{equation}
for some $c>0$, which implies the third statement.  Also, by  \cite[Lem.~4.3]{Koralov04},
\begin{equation}\label{eq:lem43Kor}
\Pe^{x}\big(\bar{\tau}(h)<\bar{\tau}_0\big)=\frac{H(x)}{h}+\CO(h\abs{\log\epsilon}),
\end{equation}
which implies the second statement as $\lambda_2(\eps)<2\lambda_1(\eps) -c $. (Lemma~4.3 in \cite{Koralov04}
can be improved to the extent where the assumption $\lambda_2(\eps)<2\lambda_1(\eps) -c $ is not necessary, but we don't need
it here.)

It remains to prove the first and fourth statements. From now on, we write $\lambda_1$ instead
of $\lambda_1(\eps)$, and similarly for $\lambda_2$.
We also introduce $\lambda_3 = {1\over 2}-\kappa$ for a small number $\kappa > 0$
and set $u^{\eps}(x)=\Ee^x(\bbtau)$. Note that $u^{\e}$ satisfies the boundary value problem
\begin{equs}[2]
L^{\e}u^{\e}(x)&=-1,&\qquad x&\in U_i(\e^{\lambda_3},\e^{\lambda_1}),\\
u^{\e}(x)&=0,&\qquad x&\in \partial  U_i(\e^{\lambda_3},\e^{\lambda_1}),
\end{equs}
where
\[
L^{\e}=\frac{\alpha\e^{\alpha}\abs{\log\epsilon}}{2}\Delta+\frac{\alpha\abs{\log\epsilon}}{\e^{1-\alpha}}\nabla^{\perp}H\cdot \nabla
\]
is the generator of the process $Z_t$ under $\Pe^x$. Let also $\tilde{u}^{\e}(x)=\e^{-\alpha}(H(x)-\e^{\lambda_3})(\e^{\lambda_1}-H(x))$. It is not hard to see that
\begin{equs}[2]
L^{\e}\tilde{u}^{\e}(x)&=-\alpha\abs{\log\epsilon}|\nabla H(x)|^2+h^{\e}(x),&\qquad x&\in U_i(\e^{\lambda_3},\e^{\lambda_1}),\\
\tilde{u}^{\e}(x)&=0,&\qquad x&\in \partial  U_i(\e^{\lambda_3},\e^{\lambda_1}),
\end{equs}
where
\[
h^{\e}(x)=\frac{\alpha\abs{\log\epsilon}}{2}\Delta H(x)(\e^{\lambda_1}+\e^{\lambda_3}-2H(x))=\CO(\e^{\lambda_1}\abs{\log\epsilon})\;,
\]
uniformly in $x\in U_i(0,\e^{\lambda_1})$.

Let $c>0$ be a constant to be specified later and note that
\begin{equs}[2]
L^{\e}(u^{\e}-c\tilde{u}^{\e})(x)&=c \alpha \abs{\log\epsilon}|\nabla H(x)|^2-1-ch^{\e}(x),&\qquad x&\in U_i(\e^{\lambda_3},\e^{\lambda_1}),\\
u^{\e}(x)-c\tilde{u}^{\e}(x)&=0,&\qquad x&\in \partial  U_i(\e^{\lambda_3},\e^{\lambda_1}).
\end{equs}

Writing $\bbtau = \bar{\tau}({\e^{\lambda_1}})\wedge\bar{\tau}\paren{\e^{\lambda_3}}$
for the first exit time from the region $U_i(\e^{\lambda_3},\e^{\lambda_1})$,
it follows from the Feynman-Kac formula that 
\[
u^{\e}(x)-c\tilde{u}^{\e}(x)=\Ee^x\int_0^{\bbtau}\left(1-\alpha c\abs{\log\epsilon}|\nabla H(Z_t)|^2+ch^{\e}(Z_t)\right)dt.
\]
We will show that for all sufficiently small $\kappa>0$, by choosing $c>0$ large (small), the right hand side can be made positive (negative) for all $x\in U_i(2\e^{\lambda_3},\e^{\lambda_2})$ for small enough $\e$, which then implies that there is a constant $c>0$ such that
\begin{equation}\label{eq:boundss}
\frac{1}{c}\e^{-\alpha}(H(x)-\e^{\lambda_3})(\e^{\lambda_1}-H(x)) \leq \Ee^x (\bbtau) \leq c \e^{-\alpha}(H(x)-\e^{\lambda_1})(\e^{\lambda_1}-H(x)),
\end{equation}
for all $x\in U_i(2\e^{\lambda_3},\e^{\lambda_2})$.

To do this, it clearly suffices to show that there are constants $0<A<B$ such that, for small enough $\e$, the quantity
\[
I(x,\bbtau,\e) := \Ee^x\int_0^{\bbtau}\abs{\log\epsilon}|\nabla H(Z_t)|^2\,dt\;,
\]
satisfies
\begin{equ}[e:wantedI]
\frac{I(x,\bbtau,\e)}{\Ee^x\big(\bbtau\big)}\in [A,B]\;,\qquad \forall x\in U_i(2\e^{\lambda_3},\e^{\lambda_2})\;.
\end{equ}

Let us rewrite $I(x,\bbtau,\e)$ by breaking the domain of integration into intervals corresponding to individual rotations of the unperturbed process $z_t^{\e}$. We inductively define the stopping times
\[
\hat T_0^\eps=0\;,\qquad \hat T_{n+1}^\eps = \hat T_n^\eps + \Teps\big(Z({\hat T_n^\eps})\big)\;,\qquad
T_n^\eps = \hat T_n^\eps \wedge \bar{\tau}_0\wedge\bar{\tau}({2\e^{\alpha/2}})\;,
\]
and note that by part (\ref{lstNewB}) of Lemma \ref{lstNew}, we have that for every $R$ and  small enough $\e$,
\begin{equation}\label{eq:T1notT}
\Pe^{x}(T_1^\eps\neq \Teps(x))\leq\Pe^x\Big(\sup_{0\leq t\leq \Teps(x)}|H(Z_t)-H(x)|>\e^{\lambda_3}\Big)\leq\e^{R}
\end{equation}
for $x\in U_i(\e^{\lambda_3},\e^{\lambda_1})$.
Setting
\[
\bTeps = \min\{T_n^\eps : T_n^\eps\geq \bbtau\}\;,
\]
we replace the exit time in $I(x,\bbtau,\e)$ by $\bTeps$ in the upper limit of the integration. 
We will show later that the error introduced in this way is of order $\CO(\e^{1-\alpha})$. 
We have the identity
\begin{equs}[eq:rot1]
I(x,\bTeps,\e) &=\sum_{n=0}^{\infty}\Ee^x\left[\one_{\{T_n^\eps<\bTeps\}}\Ee^{Z(T_n^\eps)}\int_0^{T_1^\eps}\abs{\log\epsilon}|\nabla H(Z_t)|^2dt\right] \\
&= \sum_{n=0}^{\infty}\Ee^x \Bigl(\one_{\{T_n^\eps<\bTeps\}}I(Z(T_n^\eps),T_1^\eps,\e)\Bigr)\;.
\end{equs}
By \eqref{eq:T1notT} and part (\ref{lstNewC}) of Lemma~\ref{lstNew}, it is not hard to see that for any $y\in U_i(\e^{\lambda_3},\e^{\lambda_1})$,
\begin{equation}\label{eq:rot2}
I(y,T_1^\eps,\e) = I(y,T^\eps(y),\e) +o(\e^{1-\alpha}).
\end{equation}
 Since $y \in U_i$, one can write this as
\begin{equation}\label{eq:rot3}
I(y,T^\eps(y),\e) = \frac{\e^{1-\alpha}}{\alpha}\oint_{\CL_i(H(x))}|\nabla H| \, dl=\frac{\e^{1-\alpha}}{\alpha}\oint_{\partial U_i}|\nabla H| \, dl+o(\e^{1-\alpha}).
\end{equation}
Putting together \eqref{eq:rot1}, \eqref{eq:rot2}, \eqref{eq:rot3}, we conclude that there exists a constant $c$ such that
\begin{equ}
I(x,\bTeps,\e)
= \Ee^x\Bigl(\sum_{n=0}^{\infty}\one_{\{T_n^\eps<\bTeps\}}\left(c \eps^{1-\alpha} +o(\eps^{1-\alpha})\right) \Bigr)\;.
\end{equ}
By part (\ref{lstNewA}) of Lemma~\ref{lstNew}, there exist constants $0 < a < b$ such that, for $\eps$ small enough 
and on the event 
$\{T_n^\eps<\bTeps\}$, one has $\eps^{\alpha-1}(T_{n+1}^\eps-T_{n}^\eps) \in (a,b)$ almost
surely. Therefore,  there exists a closed interval $J \subset \R_+ \setminus \{0\}$, a sequence of random variables 
$c_n(\eps) \in J$ and an element $c(\eps) \in J$ such that 
\begin{equ}
I(x,\bTeps,\e)
= \Ee^x\Bigl(\sum_{n=0}^{\infty}\one_{\{T_n^\eps<\bTeps\}}\bigl(c_n(\eps) + o(\eps^{1-\alpha})\bigr)[T_{n+1}^\eps-T_{n}^\eps]\Bigr) =
\bigl(c(\e) + o(1)\bigr)\Ee^x\bTeps\;.
\end{equ}

In order to obtain \eqref{e:wantedI}, we would like to replace 
$\bTeps$ by $\bbtau$. First note that
\begin{equ}[e:boundDiffExit]
\Ee^x\big(\bTeps-\bbtau\big)=\CO(\e^{1-\alpha}).
\end{equ}
It is clear from the analysis in \eqref{eq:rot1}, \eqref{eq:rot2}, and \eqref{eq:rot3} and part (\ref{lstNewB}) of Lemma \ref{lstNew} that this is indeed a small error term compared to each of the stopping times under the expectation. Indeed, by the conditions on $\lambda_1,\lambda_2$, it follows that $\e^{\lambda_2-\lambda_1}\to 0$ as $\e\downarrow 0$. Pick a $\delta>0$ such that $\e^{\lambda_2}+\e^{1/2-\delta}<\e^{\lambda_1}$ for sufficiently small $\e$. Also choose $0<\bar\kappa<\kappa\wedge\delta$ and note that in order to get out of $U_i(\e^{\lambda_3},\e^{\lambda_1})$ with starting point in $U_i(2\e^{\lambda_3},\e^{\lambda_2})$ 
within $\e^{-\bar \kappa}$ rotations, $H(Z_t)$ needs to change by more than 
$\e^{1/2+\bar \kappa -(\kappa\wedge\delta)}$ during at least one of these rotations and thus, writing $\Rot$ for the number of rotations before
the exit time, one obtains from part (\ref{lstNewB}) of Lemma~\ref{lstNew}
\[
\Pe^x(\Rot \leq \e^{-\bar \kappa})<\e^{-\bar \kappa}\e^{R}
\]
for every $R>0$. This implies the primitive a priori lower bound on the expected exit time
\[
\Ee^x(\bbtau)>c\e^{1-\alpha}\Ee^x(\Rot)>c\e^{1-\alpha}\e^{-\bar \kappa}(1-\e^{R-\bar \kappa}),
\]
which, when combining it with \eqref{e:boundDiffExit}, shows that
\[
\Ee^x\big(\bTeps-\bbtau\big)=
o(1)\Ee^x\big(\bbtau\big).
\]
Similarly, replacing $\bTeps$ by $\bbtau$  in the integral produces an error term of order $\CO(\e^{1-\alpha}\abs{\log\epsilon})$, which is much smaller than $\Ee^x(\bbtau)$ by exactly the same argument.

This way we have proved
\[
I(x,\e)= c_2(\e)\Ee^x(\bbtau),
\]
where $c_2(\e)\in [A_2,B_2]$ for some positive constants $A_2,B_2$ and all  $x\in U_i(2\e^{\lambda_3},\e^{\lambda_2})$, and therefore by \eqref{eq:boundss},
\begin{equation}\label{eq:boundssss}
 \frac{1}{c}H(x)\e^{\lambda_1-\alpha}\leq\Ee^x (\bbtau) \leq c H(x)\e^{\lambda_1-\alpha}
\end{equation}
for all $x\in U_i(2\e^{\lambda_3},\e^{\lambda_2})$. This immediately implies the fourth statement of the lemma. Indeed, if $H(x)=\e^{\lambda_2}$, then
\[
\Ee^x \big(\bar\tau({\e^{\lambda_1}}) \wedge \bar\tau_0\big)>\Ee^x (\bbtau)>c^{-1}\e^{\lambda_1+\lambda_2-\alpha}.
\]

Similarly, for $x\in U_i(2\e^{\lambda_3},\e^{\lambda_2})$ we have the upper bound
\[
\Ee^x (\bbtau)  = \CO(\e^{\lambda_1+\lambda_2-\alpha}).
\]
To prove the first statement of the lemma, note that
\[
\Ee^x \big(\bar\tau({\e^{\lambda_1}}) \wedge \bar\tau_0\big)\leq \Ee^x (\bbtau)+\sup_{x\in\CL_i(\e^{\lambda_3})}\Ee^x\big(\bar\tau({\e^{\lambda_1}}) \wedge \bar\tau_0\big).
\]
The first term is of the right order by \eqref{eq:boundssss}. To treat the second one, let $\kappa_1>\kappa$. By \eqref{jjii},
\begin{equation}\label{eq:smallstep}
\sup_{x\in\CL_i(\e^{\lambda_3})}\Ee^x\big(\bar\tau({\e^{\lambda_1}}) \wedge \bar\tau_0\big)\leq c \e^{1-2\kappa_1-\alpha}+\sup_{x\in\CL_i(\e^{\lambda_3})}\Pe^x(\bar\tau({\e^{1/2-\kappa_1}}) < \bar\tau_{0})\sup_{x\in\CL_i(\e^{1/2-\kappa_1})}\Ee^x\big(\bar\tau({\e^{\lambda_1}}) \wedge \bar\tau_{0}\big).
\end{equation}
By \eqref{eq:lem43Kor}, the probability appearing on the right hand side is less than $\eps^{\kappa_1-\kappa}+c\e^{1/2-\kappa_1}\abs{\log\epsilon}$. By this and the third statement of the lemma, we have
\[
\sup_{x\in\CL_i(\e^{1/2-\kappa})}\Ee^x(\bar\tau({\e^{\lambda_1}}) \wedge \bar\tau_{0})<c(\e^{1-2\kappa_1-\alpha}+\e^{2\lambda_1-\alpha+\kappa_1-\kappa}+\e^{\frac{1}{2}-\kappa_1+2\lambda_1-\alpha}\abs{\log\epsilon}).
\]
It is easy to check by choosing $\kappa$ and $\kappa_1$ small enough and recalling that in this case $\lambda_2(\e)-\lambda_1(\e)$ goes to zero as $\e\downarrow 0$, all three of these terms are $\CO(\e^{\lambda_1+\lambda_2-\alpha})$, which finishes the proof for $x\in U_i(2\e^{\lambda_3},\e^{\lambda_2(\e)})$.

Finally if the starting point is somewhere in $U_i(0,2\e^{\lambda_3})$, then we first wait until the process gets out of this set. By \eqref{jjii}, the expected value of how long this takes is $\CO(\e^{2\lambda_3-\alpha})$ which does not change the conclusion.
%
\end{proof}

\begin{remark}
It is possible to prove the first statement of the previous lemma without the assumption that $\lambda_1(\e)-\lambda_2(\e)\to 0$ as $\e\downarrow 0$ by iterating \eqref{eq:smallstep}.
We don't, however, need this here.
\end{remark}

Before we proceed,
we introduce a little more notation. For $x \in \R^2 \setminus \CL$, 
write $U_{i(x)}$ for the $U_i$ containing $\pi(x)$ and define
$\Hep \colon \R^2 \to \R$ by
\begin{equ}
\Hep(y) = 
\left\{\begin{array}{cl}
	\eps^{-{\alpha\over 2}}|H(y)| & \text{if $\pi(y) \in U_{i(x)}$,} \\
	-\eps^{-{\alpha\over 2}}|H(y)| & \text{otherwise.}
\end{array}\right.
\end{equ}
The reason for introducing $\Hep$ is that it provides us with a ``signed'' version of the 
distance $d_\cG$ on $\cG$, i.e.,\ $|\Hep(y) - \Hep(x)| = d_\cG(\Gammaeps(x),\Gammaeps(y))$.

\begin{lemma} \label{nn3new} Suppose that $\beta_1 = \beta_1(\eps)$ and $\beta_2 = \beta_2(\eps)$ are such that $0 < \beta_2 < \beta_1 < 1/2$ and $\beta_1(\e), \beta_2(\e) \rightarrow \alpha/2$ as $\e \downarrow 0$. For each $f \in \mathcal{D}$,
\begin{equation}\label{ABC}
\sup_{x\in U_i(\eps^{\beta_1}, \e^{\beta_2})}\left|\Ee^x\left[f(Y^{\e}_{T(x)})-f(\Gammaeps(x))-\int_0^{T(x)}\mathcal{A}f(Y^{\e}_t)dt\right]\right| = o(\eps^{1-\alpha})~~~{\it as}~~\e \downarrow 0.
\end{equation}
Moreover,
\begin{equation}\label{ABC1}
\sup_{x\in U_i(\eps^{\beta_1}, \e^{\beta_2})} \left| \Ee^x(\Hep(Z_{ T(x)}) - \Hep(x)) \right| = o(\eps^{1-\alpha})~~~{\it as}~~\e \downarrow 0.
\end{equation}
\begin{equation}\label{ABC2}
\sup_{x\in U_i(\eps^{\beta_1}, \e^{\beta_2})} \left| \Ee^x[(\Hep(Z_{ T(x)}) - \Hep(x))^2 -  T(x) a_i] \right| = o(\eps^{1-{\alpha\over 2}})~~~{\it as}~~\e \downarrow 0.
\end{equation}

\end{lemma}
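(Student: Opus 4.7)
The starting point is It\^o's formula applied to $H(Z_t)$. Since $v=\nabla^\perp H$ is tangent to the level sets of $H$, we have $v\cdot\nabla H\equiv 0$, which eliminates the dominant advective term and leaves
\begin{equation*}
H(Z_t) - H(x) = \sqrt{\alpha|\log\e|\e^\alpha}\int_0^t \nabla H(Z_s)\cdot dW_s + \frac{\alpha|\log\e|\e^\alpha}{2}\int_0^t \Delta H(Z_s)\, ds =: M_t + D_t.
\end{equation*}
By Lemma~\ref{lstNew}(\ref{lstNewB}), with probability $1-O(\e^R)$ the process $Z_t$ stays in $U_i$ throughout $[0,T(x)]$, so $\Hep(Z_t)-\Hep(x) = \e^{-\alpha/2}S(M_t+D_t)$ with $S=\sign(H(x))$ constant. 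The main computational device is to replace $Z_t$ by the deterministic orbit $z_t$ (error controlled by Lemma~\ref{lstNew}(\ref{lstNewC})) and then, via the change of variables $s=\alpha|\log\e|t/\e^{1-\alpha}$ together with $ds = dl/|\nabla H|$ along the orbit, convert time integrals into line integrals over $\{H=|H(x)|\}\cap U_i$.

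For \eqref{ABC1}, taking expectation kills the martingale $M_t$, and the device above gives
\begin{equation*}
\int_0^{T(x)}\Delta H(z_t)\, dt = \frac{\e^{1-\alpha}}{\alpha|\log\e|}\oint_{\{H=|H(x)|\}\cap U_i}\frac{\Delta H}{|\nabla H|}\, dl,
\end{equation*}
in which the contour integral is uniformly bounded as $|H(x)|\to 0$ (it equals $q_i'(|H(x)|)$ where $q_i(h) = \oint_{H=h}|\nabla H|\, dl$ is smooth in $h$). Multiplying by $\e^{-\alpha/2} \cdot \alpha|\log\e|\e^\alpha/2$ then yields $O(\e^{1-\alpha/2}) = o(\e^{1-\alpha})$.

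For \eqref{ABC2}, expand $(M+D)^2$ and take expectations: It\^o's isometry gives $\E^x M_{T(x)}^2 = \alpha|\log\e|\e^\alpha\,\E^x\!\int_0^{T(x)}|\nabla H(Z_t)|^2\, dt$; the drift bound from the previous paragraph shows $\E^x D_{T(x)}^2 = O(\e^2)$; and the It\^o product formula combined with the pointwise bound $|M_t|=O(\e^{1/2})$ gives the cross term $\E^x M_{T(x)}D_{T(x)} = O(\e^{3/2})$. After multiplying by $\e^{-\alpha}$, both of the latter are $o(\e^{1-\alpha/2})$ for $\alpha\in(0,1)$. The main term, after the deterministic approximation and change of variables, becomes
\begin{equation*}
\alpha|\log\e| \int_0^{T(x)} |\nabla H(z_t)|^2\, dt = \e^{1-\alpha}\oint_{\{H=|H(x)|\}\cap U_i}|\nabla H|\, dl.
\end{equation*}
To match this with $T(x) a_i = \e^{1-\alpha} T_i(|H(x)|) a_i/(\alpha|\log\e|)$, invoke the logarithmic asymptotics $T_i(h) = c_i|\log h| + O(1)$ near each non-degenerate saddle on $\partial U_i$ together with the identity $a_i = 2q_i/c_i$; both quantities then tend to $\e^{1-\alpha}q_i$, and the hypothesis $\beta_1(\e),\beta_2(\e)\to \alpha/2$ drives the residual discrepancy to the required order.

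Finally, for \eqref{ABC}, since $Y^\e$ stays on the single edge $I_i$ throughout $[0,T(x)]$, Taylor expansion of $f\in\mathcal{D}$ along $I_i$ yields
\begin{equation*}
f(Y^\e_{T(x)}) - f(\Gammaeps(x)) = D_if(\Gammaeps(x))\Delta + \tfrac{1}{2}D_i^2 f(\Gammaeps(x))\Delta^2 + R,
\end{equation*}
with $\Delta := \Hep(Z_{T(x)})-\Hep(x)$ and cubic remainder satisfying $\E|R|=O(\e^{3(1-\alpha)/2})=o(\e^{1-\alpha})$ by Burkholder--Davis--Gundy applied to $M$ together with the drift bound for $D$. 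Taking expectation, \eqref{ABC1} controls the linear term, while \eqref{ABC2} combined with $\mathcal{A}_if=(a_i/2)D_i^2 f$ produces $T(x)\mathcal{A}f(\Gammaeps(x)) + o(\e^{1-\alpha})$; this matches $\E^x\!\int_0^{T(x)}\mathcal{A}f(Y^\e_t)\, dt$ via uniform continuity of $\mathcal{A}f\in\Psi$ together with the fact that $\sup_{t\le T(x)}|\Hep(Z_t)-\Hep(x)| = o(1)$. The main obstacle is the precise matching in~\eqref{ABC2}: one must simultaneously resolve the sub-leading parts of the logarithmic expansion of $T_i(h)$ and of $\oint_{H=h}|\nabla H|\,dl$, and control the stochastic deviation $Z_t-z_t$, all to the precision $o(\e^{1-\alpha/2})$.
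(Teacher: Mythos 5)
Your argument is, at bottom, the same as the paper's: It\^o's formula, replacement of $Z_t$ by the deterministic orbit $z_t$ via Lemma~\ref{lstNew}, conversion of $\alpha\abs{\log\e}\int_0^{T(x)}\abs{\grad H(z_t)}^2\,dt$ into the contour integral $\e^{1-\alpha}\oint_{\{\abs{H}=\abs{H(x)}\}\cap U_i}\abs{\grad H}\,dl$, and the definition~\eqref{eq:shorttimecoeff} of $a_i$ to match this with $T(x)a_i$. The only structural difference is the order: the paper applies It\^o directly to $f(Y^\e_t)$ for general $f\in\mathcal D$ (using that $f'''$ is bounded) to get \eqref{ABC} first, and then obtains \eqref{ABC1}--\eqref{ABC2} by choosing $f(h)=h-\Gammaeps(x)$ and $f(h)=(h-\Gammaeps(x))^2$ locally; you apply It\^o to $H(Z_t)$ first and recover \eqref{ABC} by a third-order Taylor expansion of $f$. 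Both routes are fine, and your reduction of \eqref{ABC} to \eqref{ABC1}--\eqref{ABC2} is sound; note that it only consumes \eqref{ABC2} at precision $o(\e^{1-\alpha})$, since your cubic remainder is $O(\e^{3(1-\alpha)/2}\abs{\log\e}^{3/2})$ and the comparison of $\Ee^x\int_0^{T(x)}\mathcal Af(Y^\e_t)\,dt$ with $T(x)\mathcal Af(\Gammaeps(x))$ already carries an $o(\e^{1-\alpha})$ error.

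The genuine gap is exactly the one you flag at the end: the rate $o(\e^{1-\alpha/2})$ in \eqref{ABC2} is not delivered by your computation, and the hypothesis $\beta_1,\beta_2\to\alpha/2$ does \emph{not} ``drive the residual discrepancy to the required order''. Writing $h=\abs{H(x)}=\e^{\beta}$ with $\beta\in[\beta_2,\beta_1]$, the quadratic-variation term is $\e^{1-\alpha}\oint_{\{\abs{H}=h\}\cap U_i}\abs{\grad H}\,dl=\e^{1-\alpha}q_i+O(\e^{1-\alpha+\beta}\abs{\log\e})$, while $T(x)a_i=\e^{1-\alpha}\,\frac{T_i(h)}{\alpha\abs{\log\e}}\,\frac{q_i}{\overline{c}_i}=\e^{1-\alpha}q_i\frac{2\beta}{\alpha}+O(\e^{1-\alpha}/\abs{\log\e})$, so the deterministic discrepancy is $\e^{1-\alpha}q_i(1-2\beta/\alpha)+O(\e^{1-\alpha}/\abs{\log\e})$. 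Under the stated hypotheses this is only $o(\e^{1-\alpha})$, and even when $\beta-\alpha/2=O(1/\abs{\log\e})$ (the regime actually used in Lemma~\ref{tins}) it is of order $\e^{1-\alpha}/\abs{\log\e}$, which is much larger than $\e^{1-\alpha/2}$; the $Z_t\mapsto z_t$ replacement error $O(\abs{\log\e}\,\e^{1-\alpha}\e^{1/2-\beta_1-\delta})$ likewise exceeds $\e^{1-\alpha/2}$ once $\alpha\ge 1/2$. So no refinement of the subleading asymptotics of $T_i(h)$ will rescue the stated exponent. For what it is worth, the paper's own one-line proof of \eqref{ABC2} (the same computation run for $f(h)=(h-\Gammaeps(x))^2$) also only yields $o(\e^{1-\alpha})$, and $o(\e^{1-\alpha})$ is all that is used downstream: condition (b) preceding Lemma~\ref{Donsker} only requires an error at most $\gamma\bar u$ with $\bar u\asymp\e^{1-\alpha}$ and $\gamma\to 0$. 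The right fix is to prove and use \eqref{ABC2} with $o(\e^{1-\alpha})$ on the right-hand side rather than to chase $o(\e^{1-\alpha/2})$.
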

\begin{proof} Let $ T'(x)  = \min(T(x), \sigma)$. By parts (\ref{lstNewA}) and (\ref{lstNewB}) of Lemma~\ref{lstNew}, for every $R$,
\begin{equs}
\Pe^x (T(x) \neq T'(x)) &= o(\eps^R) = o(\eps^{1-\alpha})\;,\\
\Ee^x[T(x)-T'(x)]&=\Ee^x\one_{\{\sigma<T(x)\}}[T(x)-\sigma]\leq T(x)\Pe^x(\sigma<T(x))=\CO(\eps^{1-\alpha})o(\eps^R)=o(\eps^{1-\alpha})\;,
\end{equs}
uniformly in  $x\in U_i(\eps^{\beta_1}, \e^{\beta_2})$.
Due to this and the boundedness of $f$ and $\mathcal{A}f$, it is sufficient to prove \eqref{ABC} with $T(x)$ replaced by $T'(x)$.
By It\^o's formula,
\begin{equs}
\Ee^x &\left[f(Y^{\e}_{T'(x)})-f(\Gammaeps(x))-\int_0^{T'(x)}\mathcal{A}f(Y^{\e}_t)dt\right]\\
&=\frac{\alpha |\log \e|}{2} \Ee^x  \int_0^{T'(x)} \left(|\nabla H(Z_t)|^2 f''(Y^{\e}_t)
 + \e^{\alpha/2} \Delta H(Z_t) f' (Y^{\e}_t)\right) dt - \Ee^x \int_0^{T'(x)}\mathcal{A}f(Y^{\e}_t) dt.
\end{equs}
The contribution from the second term in the first integral on the right hand side can be ignored due to Part (\ref{lstNewA})  of Lemma~\ref{lstNew} and
the presence of the factor $\e^{\alpha/2}$. To deal with the first term, we observe that
\begin{equs}
\frac{\alpha |\log \e|}{2} \Ee^x  &\int_0^{T'(x)} (|\nabla H(Z_t)|^2 f''(Y^{\e}_t) -  |\nabla H(z^{\e}_t(x))|^2 f''(\Gammaeps(x)) ) \,dt \\
&=\CO(|\log \e| \e^{\frac{1}{2} - \beta_1 - \delta} T(x))   = o(\eps^{1-\alpha})\;,
\end{equs}
as $\eps \downarrow 0$, provided that $\delta$ is sufficiently small.  This is where we use that $H$ is $C^2$.
We also used the fact that $f'''$ is bounded (since $ \mathcal{A} f \in \Psi$) and parts (\ref{lstNewB}) and (\ref{lstNewC}) of Lemma~\ref{lstNew} for the first equality. We also used   part (\ref{lstNewA}) of Lemma~\ref{lstNew} for the second one. Thus the first term can
be replaced by
\begin{equs}
\frac{\alpha |\log \e|}{2} &f''(\Gammaeps(x)) \Ee^x  \int_0^{T'(x)} |\nabla H(z^{\e}_t(x))|^2  dt = \frac{\alpha |\log \e|}{2} f''(\Gammaeps(x)) \int_0^{T(x)} |\nabla H(z^{\e}_t(x))|^2  dt  + o(\eps^{1-\alpha})\\
&=\frac{\e^{1-\alpha} }{2}f''(\Gammaeps(x)) \oint_{\CL(|H(x)|) \cap U_i} |\nabla H| \, dl + o(\eps^{1-\alpha}) = \frac{\e^{1-\alpha} }{2} f''(\Gammaeps(x)) \oint_{\partial U_i} |\nabla H| \, dl + o(\eps^{1-\alpha}),
\end{equs}
as $\eps \downarrow 0$. The last term is treated similarly:
\[
\Ee^x \int_0^{T'(x)}\mathcal{A}f(Y^{\e}_t) dt = \frac{a_i}{2} T(x)  f''(\Gammaeps(x))  + o(\eps^{1-\alpha}) = \frac{\e^{1-\alpha}}{2}  f''(\Gammaeps(x)) \oint_{\partial U_i} |\nabla H| \, dl + o(\eps^{1-\alpha}),
\]
as $\eps \downarrow 0$,
where the last equality is due to the definition of $a_i$ (which is well defined by part (\ref{lstNewA}) of Lemma~\ref{lstNew}). Note that in the definition of $a_i$ in \eqref{eq:shorttimecoeff}, $T_i(x)$ is the period of the unperturbed $x^{x}_t$ while here $T(x)$ is the period of the process $z_t^{\eps}$. Collecting all the terms, we obtain
\[
\Ee^x \left[f(Y^{\e}_{T'(x)})-f(\Gammaeps(x))-\int_0^{T'(x)}\mathcal{A}f(Y^{\e}_t) \, dt\right]=o(\eps^{1-\alpha}),~\text{as}~ \eps \downarrow 0,
\]
as required for the proof of \eqref{ABC}.

Formulas \eqref{ABC1} and \eqref{ABC2} are proved similarly. For \eqref{ABC1}, we apply the same arguments to a smooth bounded function $f$ such that $f(h) = h - \Gammaeps(x)$ in a neighbourhood of $\Gammaeps(x)$. 
(Here, we identify the half line in $\cG$ containing $\Gammaeps(x)$ with $\R^+$ with the origin at $O$.)
For \eqref{ABC2}, we apply the above arguments to a smooth bounded function $f$ such that $f(h) = (h - \Gammaeps(x))^2$ in a neighbourhood of $\Gammaeps(x)$.
\end{proof}

The following version of Donsker's theorem will be useful. Suppose that $\{\mathcal{F}_n\}_{n \ge 0}$ is an increasing
sequence of $\sigma$-algebras, that $\tilde u_n$ and $\tilde \xi_n$ are two families of real-valued random variables 
measurable with respect to $\mathcal{F}_n$ and $\mathcal{F}_{n+1}$ respectively, and let
$\gamma > 0$ be a (small) parameter. Assume that 
the $\tilde u_n$ are positive and that there exists a deterministic $\bar u \in (0, \gamma)$ and such that
$|\tilde u_n - \bar u| \le \gamma \bar u$ almost surely for all $n$.
The quantities $\tilde u_n$ play the role of time steps for a random walk with spatial steps $\tilde \xi_n$. 
Define the partial sums $S_n = \tilde u_0 + \cdots + \tilde u_{n-1}$.
 Given an interval $[a,b]$ with $0 \in (a,b)$, define $\widetilde{R}_0 = 0$ and $\widetilde{R}_{n+1} = \widetilde{R}_{n} + \tilde \xi_{n}\one_{\tilde R_n \in (a,b)}$, $n \geq 0$.
 Assume that
\begin{enumerate}[\hspace{2ex}(a)]
\item\label{condA} For each $n$, $|\E (\tilde \xi_n|\mathcal{F}_{n})| \leq \gamma \bar u$ almost surely.
\item\label{condB} For each $n$, 
$|\E(\tilde \xi_n^2 |\mathcal{F}_{n})- \bar u|  \leq \gamma \bar u$ almost surely.
\item\label{condC} For each $\lambda > 0$ and all $n$, $\P( |\tilde \xi_n| > \lambda |\mathcal{F}_{n})  \leq \gamma \bar u$, almost surely.
\end{enumerate}

Let $n(t) = \max(n: S_n \leq t)$ and
define the continuous time process $R_t$ by
\[
R_t = \widetilde{R}_{n(t)} + \frac{t - S_{n(t)}}{S_{n(t)+1} - S_{n(t)}} \tilde \xi_{n(t)},~~t \geq 0\;.
\]
We now claim the above process is close to stopped Brownian motion.
\begin{lemma} \label{Donsker}
Let $\tilde W^{[a,b]}$ be a standard Brownian motion, stopped when it leaves $[a,b]$.
For every $\delta > 0$ and every continuous $F \colon C(\R^+, \R) \to \R$
there exists $\gamma_0$ such that, for every $\gamma \le \gamma_0$, one has
\begin{equ}
 \bigl|\E F(R) - \E F(\tilde W^{[a,b]})\bigr| \le \delta\;.
\end{equ}

Moreover, for arbitrary positive $t_0$, $C$, $\eta$, and $r$, there are  $\delta \in (0,1)$ and $\gamma_0 >0$ 
such that, for every $\gamma \le \gamma_0$ and every $a,b$ with $-C \le a < 0< b \le C$, one has
\begin{equation} \label{ttns}
\sup_{t \in [0, t_0]} \P\Big( \sup_{s \in [t, t+\delta]} |R_{s} - R_{t}| > r\Big) \leq \delta \eta\;.
\end{equation}
\end{lemma}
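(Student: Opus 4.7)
The plan is to establish the first assertion via a martingale functional central limit theorem and to derive the modulus bound by combining that convergence with Gaussian control of the Brownian modulus of continuity.

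For the invariance principle I would first consider the unstopped walk and decompose $\tilde\xi_n = M_n + A_n$, where $A_n := \E(\tilde\xi_n \mid \CF_n)$ is predictable and $M_n$ is a martingale difference. By (a), the accumulated drift satisfies $|\sum_{k<n(T)} A_k| \le n(T)\gamma\bar u \le (1+\gamma)T\gamma$ and vanishes with $\gamma$. Combining (b) with the bound $|A_n|\le \gamma \bar u$ from (a), one has $\E(M_k^2\mid\CF_k) = \E(\tilde\xi_k^2\mid\CF_k) - A_k^2 = \bar u\,(1+O(\gamma))$; since $S_n = n\bar u\,(1+O(\gamma))$ by the hypothesis on $\tilde u_n$, the predictable quadratic variation at continuous time $t$ equals $t\,(1+O(\gamma))$. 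Together with (c), which supplies the Lindeberg condition, this verifies the hypotheses of a standard martingale invariance principle (cf.~\cite{EthierKurtz86}), and the piecewise-linearly interpolated martingale converges weakly to standard Brownian motion as $\gamma\to 0$. The leftover interpolation term $\frac{t - S_{n(t)}}{S_{n(t)+1}-S_{n(t)}}\,\tilde\xi_{n(t)}$ is dominated in modulus by a single increment and is negligible by (c). Stopping at the first exit from $[a,b]$ is a continuous functional at Brownian paths (which cross $\{a,b\}$ transversally almost surely), so the continuous mapping theorem yields $\E F(R) \to \E F(\widetilde W^{[a,b]})$ for every continuous bounded $F$.

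For the modulus bound~\eqref{ttns}, the reflection principle gives $\P(\sup_{s\in[0,\delta]} |W_s| > r/2) \le 4 e^{-r^2/(8\delta)}$, which falls below $\delta\eta/8$ for fixed $r,\eta$ once $\delta$ is sufficiently small. To transfer this estimate to $R$, approximate the indicator of $\{\sup_{s \in [t,t+\delta]} |R_s-R_t| > r\}$ from above by the continuous cutoff $F_r(\omega) = \phi\bigl(\sup_{s \in [t,t+\delta]} |\omega(s)-\omega(t)|\bigr)$ with $\one_{[r,\infty)} \le \phi \le \one_{[r/2,\infty)}$. Applying the first part of the lemma to $F_r$ gives, for $\gamma \le \gamma_0$, that $\E F_r(R) \le \E F_r(\widetilde W^{[a,b]}) + \delta\eta/4 \le \P(\sup|W_s| > r/2) + \delta\eta/4 \le \delta\eta/2$. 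Uniformity in $t\in[0,t_0]$ follows by a union bound on a $\delta/2$-grid, with the factor $2t_0/\delta$ absorbed by the super-polynomial decay of the Gaussian tail upon further shrinking $\delta$.

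The main technical obstacle I anticipate is the uniformity in the stopping parameters $(a,b)\in[-C,0)\times(0,C]$ claimed in the second assertion, since the first-part convergence is stated for each fixed pair. This uniformity is however inherited from the fact that (a)--(c), and hence the martingale-CLT bounds, do not involve $(a,b)$, and from the joint continuity of the exit-from-interval map $(a,b,\omega)\mapsto \omega\bigl(\cdot \wedge \inf\{s:\omega(s)\notin[a,b]\}\bigr)$ at Brownian paths with transversal first crossings---a set of full Wiener measure on the given compact $(a,b)$-region. A compactness-plus-continuity argument then upgrades the pointwise convergence to uniform convergence on $[-C,C]^2$, which is what is needed.
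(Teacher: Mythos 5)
Your proposal is correct and follows essentially the same route as the paper, which gives no details at all here: it simply asserts that ``a standard proof of Donsker's theorem can be easily adapted,'' citing Ethier--Kurtz, Chapter~7.4 (the martingale functional CLT, i.e.\ your drift/martingale decomposition with conditions (a)--(c) supplying the drift, quadratic-variation and small-jump hypotheses) for the first claim and Billingsley, Chapter~8 (tightness and modulus-of-continuity estimates) for the second. Your writeup is a faithful and somewhat more explicit version of exactly that adaptation, including the continuous-mapping step for the exit functional and the uniformity in $(a,b)$, so there is nothing to object to.
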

\begin{proof} 
A standard proof of Donsker's theorem (see \cite[Chap.~7.4]{EthierKurtz86} for the first statement and  \cite[Chap.~8]{billingsley1999convergence} for the second one) can be easily adapted to our situation.  
\end{proof}

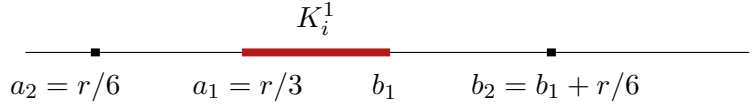
\begin{figure}
    \centering
\begin{tikzpicture}[scale=1.5]

\node at (1,-0.3) {$a_2=r/6$};
\node at (2.6,-0.3) {$a_1=r/3$};
\node at (3.8,-0.3) {$b_1$};
\node at (5.3,-0.3) {$b_2=b_1+r/6$};

\draw[-](0.65,0) -- (7,0);
\draw[line width=3pt,line cap=rect,draw=black] (1.26,0) -- (1.26,0);
\draw[line width=3pt,line cap=rect,draw=black] (5.26,0) -- (5.26,0);
\draw[line width=3pt,draw=darkred] (2.55,0) -- (3.85,0);

\node at (3.2,0.3) {$K_i^1$};

\end{tikzpicture}
\caption{Intervals $K_i^1=[a_1, b_1]$ and $K_i^2=[a_2,b_2]$.}
\label{fig:K}
\end{figure}

\begin{proof}[Proof of Lemma~\ref{tins}]
Recall that we identify one of the edges $I_i$ of the graph $\cG$ with the
semi-axis $\R_+$. Define $K_i^1 = [a_1,b_1] \subset I_i$, where $a_1 = r/3$, $b_1 > a_1$. Let $K_i^2 = [a_2, b_2]$, where $a_2 = r/6$, $b_2 = b_1 + r/6$.
%
For $x \in (\Gammaeps)^{-1}(K_i^2)$, define inductively
\[
y = \Gammaeps(x),\qquad u_0^\e = \Teps(x),\qquad u_n^\e = \Teps(Z_{S_n^\e}),\qquad \xi_n^{\e} = \Hep(Z({S_{n+1}^{\e}})) - \Hep(Z({S_{n}^{\e}})).
\]
Then Lemma~\ref{nn3new} (relations \eqref{ABC1} and \eqref{ABC2}), parts  (\ref{lstNewA}) and (\ref{lstNewB}) of Lemma~\ref{lstNew}, and the Markov property of the process imply that conditions (\ref{condA})--(\ref{condC}) preceding Lemma~\ref{Donsker} are met for
\begin{equ}
\widetilde{R}^{y,\eps}_n = a_i^{-1/2}\bigl(\Hep(Z_{S_n^\e}) - \Hep(x)\bigr)\;,\qquad 
\tilde \xi_n = a_i^{-1/2} \xi_n^{\e}\;,\qquad \tilde u_n = u_n^\e\;,
\end{equ}
with some constant $\gamma = \gamma^\eps$ converging to $0$ as $\eps \to 0$.
Therefore, by Lemma~\ref{Donsker} applied to the segment $K_i^2$ (suitably centred and rescaled) 
and part (\ref{lstNewB}) of Lemma~\ref{lstNew}, for arbitrary positive  $\eta$ and $r$, there are $\delta > 0$ 
and $\eps_0 > 0$ such that
\begin{equ}
\sup_{x \in \Gammaeps^{-1}(K_i^1) } \Pe^x \Big( \sup_{s \in [0,\delta]} d_\cG\bigl(Y^{\e}_s, \Gammaeps(x)\bigr) > \frac{r}{3}\Big) =
\sup_{x \in \Gammaeps^{-1}(K_i^1) } \Pe^x \Big( \sup_{s \in [0,\delta]} |\Hep(Z_s) - \Hep(x)| > \frac{r}{3}\Big) \leq {\delta \eta}
\end{equ}
for all $\eps \leq \eps_0$, where $d_\cG$ stands for distance on $\cG$. (There is no need to introduce the stopping time associated with exiting the segment $K_i^2$ since the starting point belongs to a smaller segment $K_i^1$.) Let ${K_{r/3}} \subset \cG$ be the set of points whose distance from $O$ does not exceed $r/3$. Using the strong Markov property for the process $Z_t$, i.e., stopping it when $\e^{-\alpha/2}|H(Z_t)| = r/3$, we obtain
\[
\sup_{x \in \Gammaeps^{-1}(K_{r/3}) } \Pe^x \Big( \sup_{s \in [0,\delta]} d_\cG\bigl(Y^{\e}_s, \Gammaeps(x)\bigr) > r\Big) \leq
\sup_{x \in \Gammaeps^{-1}(\bigcup_{i=1}^n K_i^1) } \Pe^x \Big( \sup_{s \in [0,\delta]}d_\cG\bigl(Y^{\e}_s, \Gammaeps(x)\bigr) > \frac{r}{3}\Big) \leq {\delta \eta}.
\]
For any compact set $K\subset \cG$, we can ensure $K \subseteq \bigcup_{i=1}^n K_i^1 \cup K_{r/3}$
by choosing $b_1$ sufficiently large.
Then
\[
\sup_{x \in \Gammaeps^{-1}(K) } \Pe^x \Big( \sup_{s \in [0,\delta]} d_\cG\bigl(Y^{\e}_s, \Gammaeps(x)\bigr) > r\Big) \leq {\delta \eta},
\]
which implies the statement by the Markov property.
\end{proof}

\begin{proof}[Proof of Lemma \ref{le1}] For any $\beta <\alpha/2$
conditioning and the strong Markov property imply
\begin{equation}\label{eq:thisisthelastnumber}
\Ee^x \bigl(e^{-\sigma}\bigr)\leq 1-\Pe^x \paren[\big]{ \bar{\tau}\paren{\eps^{\alpha/2}}<\bar{\tau}_0 }+
\Pe^x \paren[\big]{ \bar{\tau}\paren{\eps^{\alpha/2}}<\bar{\tau}_0}
  \sup_{y\in\CL(\e^{\alpha/2})} \Ee^y \bigl(e^{-\sigma}\bigr),
\end{equation}
for any $x\in \Lbar =\cL(\eps^\beta)$.
It follows from Lemma~\ref{Donsker} and Lemma \ref{lstNew}(\ref{lstNewB}) that there is a constant $r>0$ independent of $\e$ such that
\[
\sup_{y\in\CL(\e^{\alpha/2})}\Ee^y \bigl(e^{-\sigma}\bigr)
  \le \sup_{y\in\CL(\e^{\alpha/2})}\Ee^y \exp\brak[\big]{-\paren[\big]{\bar{\tau}\paren{\eps^{\alpha/2}/2}\wedge \bar{\tau}\paren{2\eps^{\alpha/2}}}}\leq1-2r\;,
\]
for all sufficiently small $\eps$. 
Using this in \eqref{eq:thisisthelastnumber}, we get
\[
\Ee^x \bigl(e^{-\sigma}\bigr)\leq 1- 2r \Pe^x \paren[\big]{
  \bar{\tau}\paren{\eps^{\alpha/2}} < \bar{\tau}_0
}
  \leq 1 - r\eps^{\beta - \frac{\alpha}{2}},
\]
for any         $x\in \Lbar$. The second inequality above follows from the second statement of Lemma~\ref{trt}. 
\end{proof}

\begin{proof}[Proof of Lemma~\ref{inte}]
First, let us show that \eqref{est1a} holds. As before, we define inductively
\[
u_0^\e = \Teps(x),\qquad u_n^\e = \Teps(Z_{S_n^\e}),\qquad S_n^\e = u_0^\e+\cdots+u_{n-1}^\e.
\]
Recall that $\bar\tau_h = \bar \tau(h)$ is the first time when $Z_t$ reaches ${\CL(h)}$.
By the third statement of Lemma~\ref{trt}, there is a constant $c$ such that
\begin{equation} \label{yu1}
\sup_{x \in {U(0,\e^{\alpha/2}r)}} \Ee^x \bar\tau\paren{r \e^{\alpha/2}} \leq c r^2,
\end{equation}
 while by \eqref{eq:lem43Kor} we have
\begin{equs} \label{yu2}
\sup_{x \in {U(0,\e^{\alpha/2}r)}} \Pe^x \paren[\Big]{ \bar\tau\paren{2r\e^{\alpha/2}} < \bar\tau\paren{\e^{\beta}} }
  &\leq \sup_{x \in \CL(r\eps^{\alpha/2})} \Pe^x\paren[\Big]{\bar\tau\paren{2r\e^{\alpha/2}} < \bar\tau_0}\\
  &\leq \frac{1}{2}+\CO\paren[\big]{\eps^{\alpha/2}\abs{\log\epsilon}}
  \leq \frac{2}{3}
\end{equs}
for all sufficiently small $\eps$. We claim that for each $r > 0$,
\begin{equation}\label{tuyee}
\sup_{x\in U(\e^\beta, r \e^{\alpha/2})}\left|\Ee^x \left[f\left(Y^{\e}_{\bar\tau\paren{\e^\beta} \wedge \bar\tau\paren{r\e^{\alpha/2}}}\right)
    -f(\Gammaeps(x))
    -\int_0^{\bar\tau\paren{\e^\beta} \wedge \bar\tau\paren{r\e^{\alpha/2}}}\mathcal{A}f(Y^{\e}_t) \, dt \right]\right|
  \xrightarrow{\epsilon\to0} 0.
\end{equation}
Indeed, let $\widetilde{n} = \min\{n: S_n^\e \geq \bar\tau\paren{\e^\beta} \wedge \bar\tau\paren{r\e^{\alpha/2}} \}$. Then, due to \eqref{dspn}, it is sufficient to show that
\begin{equation}\label{tuyee2}
\sup_{x\in U(\e^\beta, r \e^{\alpha/2})}
  \left|
    \Ee^x \brak[\Big]{
      f(Y^\epsilon_{S_{\widetilde{n}}^\e})-f(\Gammaeps(x))-\int_0^{S_{\widetilde{n}}^\e}\mathcal{A}f(Y^{\e}_t)dt}\right|\xrightarrow{\epsilon \to 0} 0.
\end{equation}
By part (\ref{lstNewA}) of Lemma~\ref{lstNew} and \eqref{yu1}, there is $\overline{c} > 0$ such that
\begin{align*}
  \Ee^x \widetilde{n}
    &=\sum_{n=0}^{\infty}\Pe^x \paren[\Big]{ S_n^{\eps} < \bar\tau\paren{\e^\beta} \wedge \bar\tau\paren{r\e^{\alpha/2}} }
  \leq \sum_{n=0}^{\infty}\Pe^x \paren[\Big]{ n\leq\frac{\bar\tau\paren{\e^\beta} \wedge \bar\tau\paren{r\e^{\alpha/2}} }{\overline{c}\eps^{1-\alpha}} }
  \\
 &\leq\frac{\Ee^x{\bar\tau}\paren{\e^\beta} \wedge \bar\tau\paren{r\e^{\alpha/2}} }{\overline{c}\eps^{1-\alpha}}\leq\frac{\Ee^x \bar\tau\paren{r\e^{\alpha/2}} }{\overline{c}\eps^{1-\alpha}}\leq c r^2 \e^{\alpha -1}
\end{align*}
for some constant $c$ and all sufficiently small~$\e$. Now the validity of \eqref{tuyee2} follows from \eqref{ABC}. By Lemma~\ref{trt} and \eqref{eq:choosinglevels},
\begin{equation*}
\sup_{x\in U(0,\e^\beta)} \Ee^x \paren[\big]{\bar\tau\paren{0} \wedge \bar\tau\paren{r\e^{\alpha/2}}}
  = \CO(\e^{\beta - \frac{\alpha}{2}}),
\end{equation*}
and
\begin{equation}\label{iji}
\sup_{x\in U(0,\e^\beta)} \Pe^x ({\bar\tau\paren{0} >  \bar\tau\paren{r\e^{\alpha/2}}})  \leq \sup_{x\in \CL(\e^\beta)} \Pe^x ({\bar\tau\paren{0} >  \bar\tau\paren{r\e^{\alpha/2}}}) = \CO(\e^{\beta - \frac{\alpha}{2}}),
\end{equation}
as~$\epsilon \to 0$.
Therefore, since $f'$ and $ \mathcal{A}f$ are bounded,
\begin{equation}\label{tuyee3}
\sup_{x\in U(0,\e^\beta)}
  \abs[\Bigg]{\Ee^x \brak[\Big]{
      f\paren[\Big]{ Y^{\e}_{\bar\tau\paren{0} \wedge \bar\tau\paren{r\e^{\alpha/2}}}}
      -f(\Gammaeps(x))
      -\int_0^{\bar\tau\paren{0} \wedge \bar\tau\paren{r\e^{\alpha/2}}}
	\mathcal{A}f(Y^{\e}_t) \, dt
      }
    }
  = \CO(\e^{\beta - \frac{\alpha}{2}}),
\end{equation}
as $\epsilon \to 0$.

Now take $r$ sufficiently large so that $f(\Gammaeps(x)) = 0$ whenever $x \notin U(0, r \e^{\alpha/2})$, which is possible since $f$ has compact support. By the strong Markov property,
\begin{equs}
I_0 &\defeq
  \sup_{x\in U(\e^\beta, r \e^{\alpha/2})}
    \abs[\Bigg]{
      \Ee^x \brak[\Big]{
	f(Y^{\e}_{\sigma})-f(\Gammaeps(x))
	  -\int_0^{\sigma}\mathcal{A}f(Y^{\e}_t) \, dt
      }
    }
    \\
  &\leq \sup_{x\in U(\e^\beta, r \e^{\alpha/2})}
  \abs[\Bigg]{
    \Ee^x \brak[\Big]{
      f\paren[\Big]{ Y^{\e}_{\bar\tau\paren{\e^\beta} \wedge \bar\tau\paren{2r\e^{\alpha/2}}}}
      -f(\Gammaeps(x))
      -\int_0^{\bar\tau\paren{\e^\beta} \wedge \bar\tau\paren{2r\e^{\alpha/2}}}\mathcal{A}f(Y^{\e}_t)
      \, dt
    }
  }\\
  &\quad + \sup_{x \in {U(\e^\beta,r \e^{\alpha/2})}} \Pe^x \paren[\Big]{ \bar\tau\paren{2r\e^{\alpha/2}} < \bar\tau\paren{\e^{\beta}} }
  \sup_{x\in \CL(2 r \e^{\alpha/2})}
    \abs[\Bigg]{
      \Ee^x \brak[\Big]{
	f\paren[\big]{ Y^{\e}_{\sigma} } -f(\Gammaeps(x))
	-\int_0^{\sigma}\mathcal{A}f(Y^{\e}_t) \, dt
      }
    }
  \\
  &\quad+
  \sup_{x\in \CL(\e^\beta)}
    \abs[\Bigg]{ \Ee^x \brak[\Big]{
	f(Y^{\e}_{\sigma})-f(\Gammaeps(x))
	-\int_0^{\sigma}\mathcal{A}f(Y^{\e}_t) \, dt
      }
    }
  \defeq I_1+ I_2 + I_3.
\end{equs}
By \eqref{yu2}, $I_2 \leq \frac{2}{3} I_0$ for all sufficiently small $\e$. By \eqref{iji} and \eqref{tuyee3}, $I_3 \leq \frac{1}{6} I_0 + \CO(\e^{\beta - \frac{\alpha}{2}})$ as $\e \downarrow 0$. Therefore, $I_0 \leq 6 I_1 + \CO(\e^{\beta - \frac{\alpha}{2}})$, which implies that $I_0 \rightarrow 0$ due to \eqref{tuyee} used with $2r$ instead of $r$. Finally, \eqref{est1a} follows by combining this with \eqref{tuyee3} and using the strong Markov property.

The proof of \eqref{est1b} is nearly identical, and we, therefore, omit its proof. 

Estimate~\eqref{tuyee3} is  not quite sufficient to obtain~\eqref{est2a}. Instead, we introduce $\beta'(\e)$ to be chosen later, such that $1/2>\beta'(\e) >\beta(\e)$, and express the supremum in question as
\begin{equs}
J&=\sup_{x\in \CL({\eps^{\beta}})}\Ee^{x}
  \left[f\left(Y^\epsilon_{\bar{\tau}({r\eps^{\alpha/2}})\wedge\bar{\tau}({\eps^{\beta'}})}\right)-f(\Gammaeps(x))-\int_0^{\bar{\tau}({r\eps^{\alpha/2}})\wedge\bar{\tau}({\eps^{\beta'}})}\mathcal{A}f(Y^\epsilon_t) \, dt\right]\\
&\quad+\sup_{x\in \CL({\eps^{\beta}})}
  \Ee^{x}\left(\one_{\{\bar{\tau}({r\eps^{\alpha/2}})<\bar{\tau}({\eps^{\beta'}})\}}
\Ee^{Z({\bar{\tau}({r\eps^{\alpha/2}})})}
\left[f\left(Y^\epsilon_{\sigma}\right)-f(Y^\epsilon_0)-\int_0^{\sigma}\mathcal{A}f(Y^\epsilon_t)dt\right]\right)\\
&\quad +\sup_{x\in \CL({\eps^{\beta}})}\Ee^{x}\left(\one_{\{\bar{\tau}({r\eps^{\alpha/2}})>
\bar{\tau}({\eps^{\beta'}})\}}\Ee^{Z({\bar{\tau}({\eps^{\beta'}})})}
\left[f\left(Y^\epsilon_{\sigma}\right)-f(Y^\epsilon_0)-\int_0^{\sigma}\mathcal{A}f(Y^\epsilon_t) \, dt
  \right]\right)\\
&=J_1+J_2+J_3.
\end{equs}

Note that \eqref{iji} implies that
\[
\sup_{x\in \CL({\eps^{\beta}})}\Pe^x (\bar{\tau}\paren{r\eps^{\alpha/2}}<\bar{\tau}\paren{\eps^{\beta'}})\leq\sup_{x\in\Lbar}\Pe^x (\bar{\tau}\paren{r\eps^{\alpha/2}}<\bar{\tau}\paren{0})=\CO(\eps^{\beta-\alpha/2}).
\]
Together with \eqref{est1a}, this implies $J_2=o(\eps^{\beta-\alpha/2})$. On the other hand, we have as before that
\begin{equation*} 
 \sup_{x\in \CL(\e^{\beta'})} \Pe^x ({\bar\tau\paren{0} >  \bar\tau\paren{\beta}}) = \CO(\e^{\beta'- \beta}),
\end{equation*}
as~$\epsilon \to 0$.
Similarly to \eqref{tuyee3},
\begin{equation*}
\sup_{x\in \CL(\e^{\beta'})}\left|\Ee^x \left[f\left(Y^{\e}_{\bar\tau\paren{0} \wedge \bar\tau\paren{\beta}}\right)-f(\Gammaeps(x))-\int_0^{\bar\tau\paren{0} \wedge \bar\tau\paren{\beta}}\mathcal{A}f(Y^{\e}_t)dt\right]\right|= \CO(\e^{\beta' - \frac{\alpha}{2}}),
\end{equation*}
as~$\epsilon \to 0$, and therefore by
\[
|J_3|\leq \sup_{x\in\CL(\eps^{\beta'})}\left|\Ee^x \left[f\left(Y^{\eps}_{\sigma}\right)-f(Y^{\eps}_0)-\int_0^{\sigma}\mathcal{A}f(Y^{\eps}_t)dt\right]\right|\leq \frac{|J|}{2}+\CO(\eps^{\beta'-\frac{\alpha}{2}}),
\]
where we want that $\eps^{\beta'-\beta} \rightarrow 0$, which can be achieved by choosing $\beta'$ appropriately.

Consequently, $|J|\leq 2J_1+o(\eps^{\beta-\alpha/2})$ and it remains to show that $|J_1|=o(\eps^{\beta-\alpha/2})$. This can be proved by first showing that
\begin{equation*}
J_1=o\paren[\Bigg]{ \sup_{x\in\CL(\e^{\beta})}\Ee^x \bar{\tau}\paren{r\eps^{\alpha/2}} \wedge\bar{\tau}\paren{\e^{\beta'}}}
\end{equation*}
(by breaking up the time interval into individual rotations and using~\eqref{ABC} and then applying Lemma~\ref{trt}(\ref{trtA})).
Since this is only a slight modification of the machinery we used above, we omit the details.

Finally, the left hand side of \eqref{est3a} can be written using a Taylor-expansion as
\begin{equation}\label{eq:exitbetaglue}
  \eps^{\beta-\alpha/2}\sum_{i=1}^n D_i f(0)
    \Pe^{\nu}(Z_{\tau}\in U_i )+o(\eps^{\beta-\alpha/2})+\norm{\mathcal{A}f}_{\infty}\Ee^x \tau.
\end{equation}
Recall, that by (78) in \cite{DolgopyatKoralov08}, there exists a constant $c > 0$ such that
\[
  \Pe^\nu(Z_{\tau}\in U_i)
    =\mu(\Lbar\cap U_i)
    =c (1+o(1)) \int_{\partial U_i} \abs{\nabla H} \, dl
    \xrightarrow{\epsilon \to 0} c q_i.
\]
Using this, the assumption $f \in \mathcal D$ and the third statement in Lemma~\ref{trt}, it follows that \eqref{eq:exitbetaglue} is $o(\eps^{\beta-\alpha/2})$, which proves \eqref{est3a}.
\end{proof}



\appendix
\section{Proof of Proposition~\ref{ppnCLTFirstHitShort}}\label{sxnCLTFirstHitShort}
In this appendix we sketch the proof of our limit theorem regarding the displacement of $Z_t$ during an upcrossing. The verification of this result is based on an abstract lemma that was proved in \cite{HairerKoralovPajorGyulai2014}.

Before we state the lemma we introduce its setup. Let $\CM$ be a metric space that can be written as a disjoint union
\[
\CM = \CX \sqcup C_1 \sqcup \ldots \sqcup C_n\;,
\]
where the sets $C_i$ are closed. Assume also that $\CX$ is a  $\sigma$-locally compact separable subspace, i.e., locally compact that is the union of countably many compact subspaces. Let $p_{\eps}(x,dy)$, $0\leq\eps\leq\eps^0$, be a family of transition probabilities on $\CM$
and let $g \in \CC_b(\CM,\R^2)$. Later, $p_{\eps}(x,dy)$ will come up as transition probabilities of a certain discrete time process associated to $Z_t$. We assume that the following properties hold:

\begin{enumerate}
\item $p_0(x, \CX) = 1$ for all $x \in \CM$ and $p_\eps(x, \CX) = 1$ for all $x \in \CM \setminus \CX$.
\item $p_0(x,dy)$ is weakly Feller, that is the map $x \mapsto \int_\CM f(y)p_0(x,dy)$ belongs
to $\CC_b(\CM)$ if $f\in \CC_b(\CM)$.
\item (Small escape probability from $\CX$.) There exist bounded continuous functions $h_1,\ldots,h_n: \CX \rightarrow \R_+$ such that
\[
\eps^{-\frac{1-\alpha}{2}} p_\eps(x,C_i) \rightarrow h_i(x)\;,
\]
uniformly over $x \in K$ for $K \subseteq \CX$ compact,
while $\sup_{x \in \CX} |\eps^{-\frac{1-\alpha}{2}} p_\eps(x,C_i)| \leq  c$ for some positive constant $ c $. We also have
\[
J(x) \defeq h_1(x) + \cdots + h_n(x) > 0~~\text{for}~ x \in \CX.
\]
\item $p_{\eps}(x,dy)$ converges weakly to $p_0(x,dy)$ as $\eps \rightarrow 0$, uniformly in $x\in K$ if $K\subseteq \CX$ is compact.
\item (Doeblin condition) The transition functions satisfy a strong Doeblin condition uniformly in~$\eps$. Namely, there exist a probability measure $\eta$ on $\CX$, a constant $a > 0$, and an integer $m > 0$ such that
\[
p_{\eps}^m(x,A) \geq a~\eta(A)~~~\text{for}~~x \in \CM,~A \in \mathcal{B}(\CX),~\eps \in [0, \eps_0].
\]
It then follows that for every $\eps$, there is a unique invariant measure $\lambda^{\eps}(dy)$ on $\CM$ for $p_{\eps}(x,dy)$, and the associated Markov
chain is uniformly exponentially mixing, i.e., there are $\Lambda>0,c>0$, such that
\[
|p_{\eps}^k(x,A)-\lambda^{\eps}(A)|\leq c~e^{-\Lambda k}~~~\text{for all}~ x \in \CM,~A \in \mathcal{B}(\CM),~\eps \in [0, \eps_0].
\]
\item The function $g$ is such that $\int_\CM g\, d\lambda^{\eps}=0$ for each $\eps \in [0,\eps_0]$.
\end{enumerate}

\begin{lemma} \label{abstractlemma}
(Lemma 2.4 in~ \cite{HairerKoralovPajorGyulai2014})
Suppose that Properties 1--6 above are satisfied and let $R^{x,\eps}_k$ be the Markov chain on $\CM$ starting at $x$, with transition function $p_\eps$. Let $\tau = \tau(x,\eps)$ be the first time when the chain reaches the set $C = C_1 \sqcup \ldots \sqcup C_n$. Let $e(R^{x,\eps}_k) = i$ if $R^{x,\eps}_k \in C_i$.
Then, as $\eps \to 0$,
\[
\Big( \eps^{\frac{1-\alpha}{4}} (g(R^{x,\eps}_1) + \cdots +g(R^{x,\eps}_\tau)), e(R^{x,\eps}_\tau) \Big) \rightarrow (F_1, F_2)
\]
in distribution, uniformly in $x \in \CX$, where $F_1$ takes values in $ \mathbb{R}^2$, $F_2$ takes values in $\{1,\ldots,n\}$, and $F_1$ and $F_2$ are independent. The random variable $F_1$ is distributed as $(\xi/\int_\CX J d \lambda^0)^{1\over 2} N(0, \bar{Q})$, where $\xi$ is exponential with parameter one independent of $N(0,\bar{Q})$ and $\bar{Q}$ is the matrix such that
\[
( g(R^{x, 0}_1) + \cdots + g (R^{x, 0}_k)) /\sqrt{k} \rightarrow N(0,\bar{Q})
\]
in distribution as $k \rightarrow \infty$. The random variable $F_2$ satisfies 
\begin{equ}\label{oo}
p_i= \P(F_2 = i) = \frac{\int_\CX h_i\,d\lambda^0}{\int_\CX J \,d \lambda^0},~i=1,\ldots,n.
\end{equ}
\end{lemma}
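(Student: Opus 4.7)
The plan is to decompose the dynamics into the ``interior motion'' on $\CX$, which by Properties~1 and~4 is essentially governed by $p_0$, and the rare jumps to $C$, which by Property~3 happen with per-step probability $p_\eps(y,C) = \eps^{(1-\alpha)/2}(J(y) + o(1))$, where on dying at $y$ the cell $C_i$ is selected with probability $h_i(y)/J(y)$. Thus $\tau$ plays the role of the first death time of a killed version of the $p_0$-chain with state-dependent killing rate proportional to $J$. The uniform weak convergence $p_\eps \to p_0$ on compact subsets of $\CX$ (Property~4), together with the bound $\sup_x p_\eps(x,C_i) \le c\,\eps^{(1-\alpha)/2}$ (Property~3), will allow me to couple $R^{x,\eps}_k$ with a $p_0$-chain $R^{x,0}_k$ on $\{\tau > k\}$ for $k$ up to order $\eps^{-(1-\alpha)/2}\abs{\log \eps}$, with error that vanishes uniformly in $x \in \CX$.

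The first analytic step is to show $\eps^{(1-\alpha)/2}\tau \Rightarrow \xi/\int_\CX J\,d\lambda^0$ uniformly in $x$. For $k = \lfloor s\eps^{-(1-\alpha)/2}\rfloor$, I will use the coupling to write
\[
\P(\tau > k) = \E\Bigl[\prod_{j=0}^{k-1}\bigl(1 - p_\eps(R^{x,0}_j, C)\bigr)\Bigr] + o(1) = \E\exp\Bigl(-\eps^{(1-\alpha)/2}\sum_{j=0}^{k-1} J(R^{x,0}_j)\Bigr) + o(1) \;,
\]
and then invoke the uniform exponential mixing of Property~5 to replace the ergodic sum $k^{-1}\sum J(R^{x,0}_j)$ by $\int J\,d\lambda^0$, with error of exponentially small probability. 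This will yield $\P(\tau > k) \to e^{-s\int J\,d\lambda^0}$ uniformly in $x$, giving the claimed exponential limit.

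The second step is a CLT for $S_\tau := \sum_{k=1}^\tau g(R^{x,\eps}_k)$ along the rescaled clock. Conditionally on $\eps^{(1-\alpha)/2}\tau \to s$, the number of terms grows as $s\eps^{-(1-\alpha)/2}$, and the standard CLT for uniformly exponentially mixing Markov chains (applied to $p_0$ with $\int g\,d\lambda^0 = 0$, exploiting Property~5) gives $\tau^{-1/2} S_\tau \Rightarrow N(0,\bar Q)$. Writing
\[
\eps^{(1-\alpha)/4} S_\tau = \sqrt{\eps^{(1-\alpha)/2}\tau}\cdot \tau^{-1/2}S_\tau
\]
and combining with the previous step delivers the mixed-Gaussian limit $\sqrt{\xi/\int J\,d\lambda^0}\,N(0,\bar Q)$ for $F_1$. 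A key point is that the Gaussian in the CLT should be asymptotically independent of the random clock $\eps^{(1-\alpha)/2}\tau$: by mixing, $\tau^{-1/2}S_\tau$ is insensitive to the local boundary behaviour of the trajectory that determines $\tau$, so it decouples from the accumulated killing clock driving $\xi$.

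The final step identifies the joint law with $F_2 = e(R^{x,\eps}_\tau)$. Conditionally on the trajectory and on $\tau$ being the death time, $F_2$ equals $i$ with probability $h_i(R^{x,\eps}_{\tau-1})/J(R^{x,\eps}_{\tau-1}) + o(1)$. A size-biased ergodic argument, analogous to the one used for $\tau$, shows that the law of $R^{x,\eps}_{\tau-1}$ in the limit is $J\,d\lambda^0 / \int J\,d\lambda^0$; integrating the conditional law then produces $\P(F_2 = i) = \int h_i\,d\lambda^0 / \int J\,d\lambda^0 = p_i$. Independence of $F_2$ from $(F_1,\xi)$ again follows from the mixing of Property~5, as $e(R^{x,\eps}_\tau)$ depends on the state at a single step whereas both the CLT sum and the killing clock are averages over the whole trajectory. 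The main obstacle throughout will be making \emph{uniform in} $x \in \CX$ all the limit laws and coupling errors on the long timescale $\eps^{-(1-\alpha)/2}$; this is exactly what the uniformity built into Properties~3--5 is designed to provide.
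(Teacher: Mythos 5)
First, a point of reference: the paper does not prove this lemma at all --- it is imported verbatim as Lemma~2.4 of \cite{HairerKoralovPajorGyulai2014} --- so there is no internal proof here to compare yours against. Judged on its own merits, your architecture (rescaled killing time $\eps^{(1-\alpha)/2}\tau$ converging to an exponential via concentration of the additive functional $\sum_j J(R_j)$, a Markov-chain CLT for the randomly stopped sum via an Anscombe-type argument, and identification of the exit edge through the $J$-size-biased stationary law of $R^{x,\eps}_{\tau-1}$) is the natural one for this kind of killed-chain limit theorem and is essentially the strategy of the cited proof.

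The one step that does not survive scrutiny is the proposed coupling of $R^{x,\eps}$ with the $p_0$-chain over $k\sim\eps^{-(1-\alpha)/2}\abs{\log\eps}$ steps. Property~4 only gives \emph{weak} convergence $p_\eps(x,\cdot)\to p_0(x,\cdot)$, uniformly on compacts; this provides no total-variation control, hence no maximal coupling with small per-step error, and even a per-step error of size $o(1)$ accumulates to order $k\cdot o(1)$, which is useless on the timescale $\eps^{-(1-\alpha)/2}$ unless the per-step error were $o(\eps^{(1-\alpha)/2})$ --- which is not among the hypotheses. Both conclusions you extract from the coupling must instead be obtained directly for the $p_\eps$-chain: the uniform-in-$\eps$ Doeblin and mixing bounds of Property~5 give concentration of $k^{-1}\sum_{j<k}J(R^{x,\eps}_j)$ around $\int J\,d\lambda^\eps$ and a CLT for $\sum_j g(R^{x,\eps}_j)$ with covariance $\bar Q_\eps$ given by a Green--Kubo series; Properties~4--5 then yield $\lambda^\eps\to\lambda^0$ and $\bar Q_\eps\to\bar Q$, the series converging uniformly in $\eps$ by the uniform mixing rate and termwise by weak convergence of the kernels together with Property~6. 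With that repair your first step goes through, and so does the identification of $\bar Q$ --- note the lemma defines $\bar Q$ through the $p_0$-chain while your sum runs along the $p_\eps$-chain, a mismatch your sketch passes over silently. The remaining soft points, namely the asymptotic independence of the Gaussian from the exponential clock and from $e(R^{x,\eps}_\tau)$, are correctly diagnosed but require making the CLT conditional on survival up to $\tau$ and on $R^{x,\eps}_{\tau-1}$ uniform; this is precisely where the uniformity built into Properties~3 and~5 is consumed.
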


Let us now show that Lemma~\ref{abstractlemma} is applicable to a certain Markov chain associated with $Z_t$. 
Our objective is to define this Markov chain and verify that it satisfies Properties (1)-(6). 
We start by explaining 
what it means for a process $Z_t$ to pass a saddle point.
Consider ``the projection on the separatrix'' mapping $\rho \colon \cV^{\delta,\e} \to \CL$
with
\begin{equ}
  \cV^{\delta,\e}=\{x \in \R^2 :|H(x)|\leq \delta\e^{\alpha/2}\}\;,
\end{equ}
and given by 
\begin{equ}
\rho(x) = \CL \cap \overline{\{\Phi^x_t:\, t \in \R\}}\;,\qquad \dot \Phi^x_t = \nabla 
H(\Phi^x_t)\;,\quad\Phi^x_0 = x.
\end{equ}
For  sufficiently  small $\delta$ and $\e$ this $\rho$ is uniquely defined because the closure of the 
orbit of the gradient flow $\Phi^x_t$ does indeed intersect  the separatrix $\CL$ at exactly one point.
We will say that $Z$ passes a saddle point $A_i$ if  its trajectory intersects the curves
\[
B(A_i)=\{x\in \cV^{\delta,\e},~\pi(\rho(x))= A_i\}\;,
\]
where $\pi:\mathbb{R}^2\to \torus$ is the quotient map from the plane to the torus.
Set
$\gamma_0^\eps = \beta_{0}^\eps =0$,  and then recursively
\begin{equation}\label{eqnAlphaBetaDef}
\gamma_n^\eps=\inf\Big\{t\geq\beta_{n-1}^\eps\,:\,Z_t \in \Big( \bigcup_{k\neq i}B(A_k) \Big) \bigcup \partial \cV^{\delta,\e}\Big\}\;,\quad
\beta_n^\eps=\inf\{t\geq\gamma_n^\eps: Z_t \in\CL\}\;,
\end{equation}
provided that $\pi(Z(\beta_{n-1}^\eps))\in \{A_i\} \cup \bigcup_{j} E(A_i\to A_j)$. 
Here, $E(A_i \to A_j)$ denotes the 
heteroclinic connection emanating from the saddle $A_i$ and ending at $A_j$, 
or the empty set if no such connection exists. It means the stopping time $\gamma_n^\eps$ clocks 
the first time after $\beta_{n-1}^\eps$ that the process either 
hits $\partial \cV^{\delta,\e}$, or goes past a saddle point different from the one behind $Z(\beta_{n-1}^\eps)$. Recall that we assumed that there are no 
homoclinic orbits, and therefore the definition of $\gamma_n^{\e}$ makes sense. 

Define an auxiliary metric space $\bar{\CM} = \CL \sqcup \partial \cV^{\delta,\e}$. Let us define a family of transition
functions $\bar{p}_\eps (x, dy)$ on $\bar{\CM}$. For $x \in \CL$, we
define $\bar{p}_\eps (x, dy)$ as the
distribution of $Z_{\bar\tau}$ (under~$\Pe^x$) with the random transition time $\bar\tau = \mu_1^{\eps} \wedge \beta_1^{\eps}$, where~$\beta_1^\epsilon$ 
and~$\mu_1^\epsilon$ are 
defined in~\eqref{eqnAlphaBetaDef} and~\eqref{eqnMuDef}, respectively.
In other words, it is the measure
induced by the process stopped when it either reaches the boundary of $\cV^{\delta,\e}$ or reaches the separatrix after passing by a saddle point.
For $x \in \partial \cV^{\delta,\e}$, we define $\bar{p}_\eps (x, dy)$ as the
distribution of $Z_{\bar\tau}$ (under~$\Pe^x$) with $\bar\tau = \beta_1^{\eps}$, i.e., the measure
induced by the process stopped when it first reaches the separatrix.
We write $\bar{R}^{x,\eps}_k$ for the corresponding
Markov chain starting at $x \in \bar{\CM}$.

Note that $\bar{\CM}$ depends on $\e$ since it contains $\partial \cV^{\delta,\e}$, and we would like to get rid of this dependence in order to use the abstract setup of Lemma~\ref{abstractlemma}. The projection on the separatrix  mapping $\rho$ (when lifted to the torus $\torus$) defines a natural
homeomorphism between  $\pi(\partial \cV^{\delta,\e}) \cap U_i$ and a circle $S_i \subset \pi(\CL) $ with circumference 
$\int_{\partial U_i} |\nabla H| \, dl$. We will assume that the circles $S_i$ and $S_j$ are disjoint for $i \neq j$, and denote this homeomorphism by $\rho^\e$.

While we introduced $\bar{\CM}$ as a subset of $ \mathbb{R}^2$, it is going to be more convenient to keep track of $\pi(\bar{R}^{x,\eps}_k)$ and the latest displacement separately. 
Let $N$ be a bounded measurable set in $ \mathbb{R}^2$ such that $ \mathbb{R}^2 = \bigsqcup_{z \in {\mathbb{Z}}^2} (N +z)$. For $x \in  \mathbb{R}^2$, let $[x] \in \mathbb{Z}^2$ 
be such that $x \in N + [x]$. We can choose $N$ in such a way that $[x]$ is constant on each connected component of $\pi^{-1}(U_i)$, i.e., $N$ consists of a finite number of 
cells and parts of their boundaries.
It is then natural to define a metric space $\CM$  independent of $\e$ by
\begin{equ}
\CM = (\pi (\CL) \sqcup S_1 \sqcup \dots \sqcup S_n ) \times \mathbb{Z}^2 =: \CM_1 \times \Z^2\;,
\end{equ}
which is indeed of the above type by setting $\CX = \pi (\CL) \times \mathbb{Z}^2$
and $C_i = S_i \times \mathbb{Z}^2$.
Let $\varphi: \bar{\CM} \to \CM$ 
be given by  $\varphi(x) = (\rho^{\e}(\pi(x)), [x])$ for $x \not\in \CL$, and  $\varphi(x) = (\pi(x), [x])$ for $x \in \CL$. We will write 
$\varphi_1: \bar{\CM} \to \CM_1$ and $\varphi_2:  \bar{\CM} \rightarrow   \mathbb{Z}^2$ for 
the first and second components of $\varphi$, respectively.
This allows us to define transition probabilities $p_\eps$ on $\CM$
by setting $p_\eps((x_0,k),\cdot)$ to be the law of $\varphi(Z_{\bar \tau})$ under
$\P_\eps^{\bar x_0}$ with $\bar x_0$ the only element of $N$ with $\pi(\bar x_0) = x_0$.
Similarly to before, we write $R_k^{x,\eps}$  for the 
Markov chain starting at $x \in \CM$ with transition probabilities $p_\eps$.

We finally define the function $g$ appearing in Property (6) by, for $x = (q,\xi) \in \CM$, 
setting $g((q,\xi)) = \xi \in \mathbb{Z}^2$. This continuous 
function measures the displacement during the last step if the chain is 
viewed as a process on $\mathbb{R}^2$, 
where only the integer parts of the initial and end points are counted.

The Markov chain is now defined, and it remains to verify that Properties (1)-(6) are
satisfied. Here we will adopt the approach~\cite{Koralov04}.
 The thrust of~\cite{Koralov04} is the asymptotic analysis of the behaviour of the process $Z_t$  in an $\e$-neighbourhood of the separatrix. A Markov chain on separatrices, similar
 to our $R^{x,\eps}_k$, also arises there.
In~\cite{Koralov04} the analysis, however, was done in a scaling slightly different from ours. More specifically,
in~\cite{Koralov04} the width of the separatrix region is restricted to be of order $\eps^{\alpha_1}$ with some $\alpha_1\in (1/4,1/2)$, while our width is $\delta\e^{\alpha/2}$, 
$\alpha >0$.
It is fairly straightforward to verify that
$\eps^{\alpha_1}$ may be replaced by $\delta\e^{\alpha/2}$, i.e., the imposed restriction $\alpha > 1/2$ is not needed and can be replaced by $\alpha > 0$.
We will, therefore, simply quote here the corresponding statements 
from~\cite{Koralov04} and explain their modifications necessary to imply our Properties (1)-(6).

Properties (1), (2) and (4). The existence of the limit  of the transition functions $p_{\eps}$ in the sense of Property (4) was justified in \cite[Lem.~3.1]{Koralov04}. This limit is denoted by $p_0$. An explicit formula for the density of $p_0$ was also provided in \cite[Eq.~(9)]{Koralov04}, which implies that Property (2) is satisfied. Observe that the probability of $\beta_1^{\eps}$ 
being less than $\mu_1^{\eps}$ tends to one as $\eps\downarrow 0$, uniformly in $x\in\CL$ by  \cite[Eq.~(26)]{Koralov04}. This implies Property (1).

Let us sketch the proof of the Doeblin condition (Property (5)). It suffices to show that there exists $E' \subset \CL_0$,  a connected segment of the same heteroclinic orbit of 
$H$, and there are a constant $c > 0$ and an integer $m > 0$ such that 
  \begin{equ}\label{doeblin}
\bar{p}_{\eps}^m(x, E') \geq c~|E'|~~~\text{for all}~~x \in  \fd \cap \bar{\CM} ,~\eps \in [0, \eps_0],
\end{equ}
where $|E'|$ is the arclength of $E'$. 
Since 
 $\bar{p}_\eps(x,   \fd \cap \CL) = 1$, if $x \in  \fd \cap \partial \cV^{\delta,\e}$, it suffices to show 
 the last estimate immediately holds for all $x \in \fd \cap \CL$.
 For  $x \in \fd \cap \CL$ we can obtain~\eqref{doeblin} once we show that 
 there is a set $J \subset \R^2$, that may depend on $\e$, such that it has the following two properties.
 Firstly, 
 there  is $m > 0$ such that
\[
\P^x_\eps \left( Z_t \in J~~\text{for some}~\gamma^{\eps}_{m} < t < \beta^{\eps}_{m}\right) > c > 0,  \forall x \in  \fd \cap \CL\;. 
\]
Secondly, 
\[
\P^x_\eps \left( Z_t \in E' ~~\text{for some}~t>0 \right) > c > 0,  \forall x \in  J. 
\]

We construct $J$ as follows. Suppose $a_2$ and $a_3$ are the endpoints of $E'$, and $a_1$ lies on the same heteroclinic orbit of $H$ so that
the points are ordered in the direction of the flow $v$,  as depicted on Figure~\ref{fig:J}.  
 Let $J$ be a piece of the curve in $\cV^{\delta,\e}$, 
 that is mapped by $\rho$ to $a_1$:
$J = \{ x \in \cV^{\delta,\e} \bigcap \pi^{-1}(U_k)  : \sqrt{\e} \leq |H(x)| \leq 2 \sqrt{\e}, \rho(x)=a_1 \}$ for some $k$.
  
 Roughly speaking, the first property  means that the process has a positive chance of going to a particular curve at a distance $\sqrt{\eps}$ from the separatrix, 
 transversal to the flow lines, prior to passing by $m$ saddle points. This is not surprising since the ratio of the parallel advection to the diffusion is of order $1/\eps$. 
 The proof follows along the same lines as the proof of  \cite[Lem.~3.1]{Koralov04}. 
 Similarly, the second property is true, because if the process $Z$ starts on $J$, then it will take $\CO(\e)$ time for the flow $v$ to carry $Z$ past the segment $E'$, but this is sufficient
 for diffusion move the distance $\CO(\sqrt{\e})$, and to reach the separatrix. The latter argument is the same as the derivation of (63) in \cite{DolgopyatKoralov08}. Thus
  the Doeblin condition for $R^{x, \eps}_k$ holds.

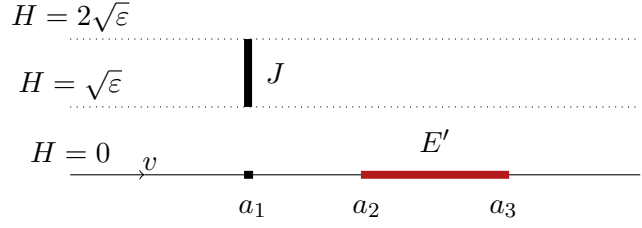
\begin{figure}
    \centering
\begin{tikzpicture}[scale=1.5]

\node at (1.6,-0.3) {$a_1$};
\node at (2.6,-0.3) {$a_2$};
\node at (3.8,-0.3) {$a_3$};

\draw[-](0.65,0) -- (5,0);
\draw[line width=3pt,line cap=rect,draw=black] (1.56,0) -- (1.56,0);
\draw[line width=3pt,draw=darkred] (2.55,0) -- (3.85,0);
\draw[line width=3pt,draw=black] (1.56,0.6) -- (1.56,1.2);

\node at (1.8,0.9) {$J$};
\node at (3.2,0.3) {$E'$};

\draw[->] (0,0) -- (0.65,0);
\node at (0.7,0.1) {$v$};
\draw[dotted] (0,0.6) -- (5,0.6);
\draw[dotted] (0,1.2) -- (5,1.2);
\node at (0,0.2) {$H= 0$};
\node at (0,0.8) {$H= \sqrt{\e}$};
\node at (0,1.4) {$H=2 \sqrt{\e}$};
\end{tikzpicture}
\caption{Construction of $J$.}
\label{fig:J}
\end{figure}

With our definition of $g$,
\[
\int_\CM g(x)\, d\lambda^{\eps}(x) \left(\int_\CM \E\bar\tau\, d\lambda^\eps(x) \right)^{-1} = \lim_{t \rightarrow \infty} (\E Z_t /t),
\]
where $ \bar\tau= \mu_1^{\eps} \wedge \beta_1^{\eps}$ is the random transition time for our Markov chain, and the right hand side is the effective drift for the original process starting from an arbitrary point $x$. Note that for some $C(\e)>0$,
\begin{equation*}
  \lim_{t \rightarrow \infty} \frac{\E Z_t}{t}
    = C(\e)\int_{\torus} v(x) \, dx
    = 0,
\end{equation*}
which implies Property~(6).

Small escape probability from $\CX$ (Property~(3)) follows from \cite[Lem.~4.1 \& 4.3]{Koralov04}. Indeed, the first lemma describes the
asymptotics of the distribution of $H(Z_{\gamma_{1}^{\eps}})$, while the second
describes the probability of the process starting at $x$ to exit the boundary layer before reaching the separatrix,
assuming that $H(x)$ is fixed. The two lemmas, combined with the Markov property of the process, imply
Property~(3).  Moreover, our functions $h_i(x) = h_i^\delta(x)$ depend on $\delta$ and can be identified as
\begin{equation}\label{hi}
h_i^\delta(x) =  \lim_{\eps \rightarrow 0} \eps^{-\frac{1-\alpha}{2}} \P\Big(\hbox{ the process starting at}~Z_{\gamma_1^{\eps}}~\text{reaches}~\partial \cV^{\delta,\e} 
\cap \pi^{-1}\left(U_i\right)~\hbox{ before reaching}~\cL\Big).
\end{equation}

\begin{proof}[Proof of Proposition~\ref{ppnCLTFirstHitShort}]
 From \cite[Lemma 4.1 and Lemma 4.3]{Koralov04}, it follows that $h_i^\delta(x)$ in~\eqref{hi} satisfy 
\[
\int_X h^\delta_i(x) \,d \lambda^0(x) = \delta^{-1}\bar{p}_i,~~i =1,\ldots,n,
\]
for some $\bar{p}_i > 0$. Now Lemma~\ref{abstractlemma} implies that covariance matrix $Q$ and probabilities $p_i$ in~\eqref{oo}  must satisfy
\[
Q = \bar{Q} /(\bar{p}_1 + \cdots + \bar{p}_n)\;,\qquad
p_i = \bar{p}_i/(\bar{p}_1 + \cdots + \bar{p}_n).
\]
The non-degeneracy of $Q$ was demonstrated in \cite{HairerKoralovPajorGyulai2014} in the case when $\alpha = 0$. The proof does not require modifications in the current case.

It is clear that Lemma \ref{abstractlemma} with $g$ and $C_i$ introduced above implies the first statement of Proposition~\ref{ppnCLTFirstHitShort}. The second statement of Proposition~\ref{ppnCLTFirstHitShort} requires a slight strengthening of the abstract Lemma \ref{abstractlemma} without major modifications in the proof (see \cite{HairerKoralovPajorGyulai2014}).
\end{proof}

\section{A formal asymptotic expansion}\label{sxnAsymptoticExpansion}
  We devote this appendix to a heuristic derivation of the system~\eqref{eqnThetaSys} through a formal asymptotic expansion.
  Let~$\delta$ be a small parameter such that
  \begin{equation*}
    \delta \xrightarrow{\epsilon \to 0} 0,
    \qquad\text{and}\qquad
    \frac{\epsilon}{\delta} \xrightarrow{\epsilon \to 0} 0.
  \end{equation*}
  The typical situation arising throughout this paper is for~$\delta = \epsilon^{1 - \alpha}$.
  This specific choice, however, is unnecessary for the formal asymptotics presented here. 

  Define~$\varphi = \varphi^{\epsilon,\delta}$ by
  \begin{equation*}
    \varphi(z, t) = \tilde \theta\paren[\Big]{ z, \frac{t \log\paren{\delta/\epsilon}}{\delta}}
  \end{equation*}
  and observe
  \begin{equation*}
    \frac{1}{\log\paren{\delta/\epsilon}} \partial_t \varphi
      = \frac{1}{\delta} v \cdot \grad \varphi + \frac{\epsilon}{2\delta} \lap \varphi.
  \end{equation*}
  For notational convenience, we suppress the $\epsilon$ and $\delta$-dependence of $\varphi$ and other quantities below.
  We perform an asymptotic expansion for~$\varphi$ by writing
  \begin{equation*}
    \varphi(z, t) = \varphi_0( x, z, t ) + \gamma \varphi_1( x, z, t ) + \CO(\gamma^2).
  \end{equation*}
  where $z$ is the ``fast variable'', $\gamma = \delta^{1/4}$ and $x = \gamma z$.
  Using
  \begin{equation*}
    \grad = \gamma \grad_x + \grad_z
    \qquad\text{and}\qquad
    \lap = \gamma^2 \lap_x + 2\gamma \grad_x \cdot \grad_z + \lap_z,
  \end{equation*}
  we compute
  \begin{multline}\label{eqnVarphi}
    \frac{1}{\log\paren{\delta/\epsilon}} \partial_t \varphi
      = \frac{\gamma}{\delta} v(z) \cdot \grad_x \varphi_0
	+ \frac{1}{\delta} v(z) \cdot \grad_z \varphi_0
	+ \frac{\epsilon}{2\delta} \lap \varphi_0
      \\
	+ \gamma\paren[\Big]{
	  \frac{\gamma}{\delta} v(z) \cdot \grad_x \varphi_1
	  + \frac{1}{\delta} v(z) \cdot \grad_z \varphi_1
	  + \frac{\gamma^2 \epsilon}{2\delta} \lap_x \varphi_1
	  + \frac{\gamma \epsilon}{\delta} \grad_x \cdot \grad_z \varphi_1
	  + \frac{\epsilon}{2\delta} \lap_z \varphi_1
	}
	+ \CO(\gamma^2).
  \end{multline}

  Since there is only one term of order $\CO(1/\delta)$, it must vanish.
  This gives
  \begin{equation}\label{eqnVarphi0}
    v(z) \cdot \grad_z \varphi_0 = 0,
  \end{equation}
  and hence, the dependence of~$\varphi_0$ on $z$ is only through~$\Gammaeps(z)$, the projection of~$z$ onto~$\cG$.%
  \footnote{%
    Here, analogous to~\eqref{eqnGammaepsDef}, $\Gammaeps(z) = (i, (\delta/\epsilon)^{1/2} \abs{H(z)}) \in \cG$ when $z \in U_i$.
  }
  In view of our rescaling, we expect
  \begin{equation*}
    \theta(x, y, t) = \varphi_0(x, z, t) + o(1),
    \qquad
    \text{where } y \in \cG \text{ and } y = \Gammaeps(z).
  \end{equation*}
  Consequently, in order to justify~\eqref{eqnThetaSys}, we will formally obtain equations for the function~$\theta$ defined by
  \begin{equation*}
    \theta(x, y, t) \defeq \lim_{\epsilon \to 0} \varphi_0(x, z, t),
    \qquad
    \text{where } y \in \cG \text{ and } y = \Gammaeps(z).
  \end{equation*}

  We begin by balancing the $\CO(\gamma/\delta)$ terms by choosing $\varphi_1$ to be the solution of
  \begin{equation}\label{eqnVarphi1}
    \frac{\epsilon}{2} \lap_z \varphi_1 +  v(z) \cdot \grad_z \varphi_1
      = - v(z) \cdot \grad_x \varphi_0,
  \end{equation}
  with periodic boundary conditions in~$z$.
  In order to do this we would need to verify the compatibility condition
  \begin{equation*}
    \int_{U} v(z) \cdot \grad_x \varphi_0 \, dz = 0.
  \end{equation*}
  Indeed, let $U$ be any connected component of the support of $\varphi_0$.
  Then, for fixed~$x$ and~$t$, equation~\eqref{eqnVarphi0} implies that for all $z \in U$, we have $\grad_x \varphi_0 (x, z, t) = F\circ H(z)$ for some function~$F$.
  Consequently,
  \begin{equation}\label{eqnVint}
    \int_{U} v(z) \cdot \grad_x \varphi_0 \, dz
      = \int_{U} v(z) \cdot F(H(z)) \, dz
      = \int_U \curlz (F \circ H) \, dz
      = \int_{\partial U} F \circ H \cdot dl
      = 0,
  \end{equation}
  since $F(H)$ is constant on $\partial U$.
  This ensures that the compatibility condition for~\eqref{eqnVarphi1} is satisfied.

  In order to express~$\varphi_1$ more conveniently, define the corrector $\chi = (\chi_1, \chi_2)$ to be the solution of the normalised cell problem
  \begin{equation}\label{eqnCorrector2}
    \frac{\epsilon}{2} \lap_z \chi_i +  v(z) \cdot \grad_z \chi_i
      = - v_i(z) \paren[\Big]{
	  \frac{\partial_{x_i} \varphi_0}{\av{\partial_{x_i} \varphi_0}}
	},
  \end{equation}
  with periodic in~$z$ boundary conditions.
  Here $\av{f}$ denotes the average of~$f$ with respect to the fast variable~$z$.
  We remark that the ``standard corrector'', denoted by $\bar \chi = (\bar \chi_1, \bar \chi_2)$, is usually chosen to be the solution of the cell problem
  \begin{equation}\label{eqnChiBar}
    \frac{\epsilon}{2} \lap_z \bar\chi_i +  v(z) \cdot \grad_z \bar\chi_i
      = - v_i(z),
  \end{equation}
  Our corrector~$\chi$ has an extra term depending on the fast variable, however this dependence is only through the projection onto the Reeb graph.

  With this notation, observe
  \begin{equation*}
    \varphi_1 = \chi \cdot \grad_x \av{\varphi_0}.
  \end{equation*}
  Balancing terms in~$\varphi$ yields
  \begin{align}
    \nonumber
    \frac{1}{\log\paren{\delta/\epsilon}} \partial_t \varphi_0
      &= \frac{\epsilon}{2\delta} \lap \varphi_0
	+ \frac{\gamma^2}{\delta} v(z) \cdot \grad_x \varphi_1
	+ o(1)
      \\
      \label{eqnVarphi0evol}
      &= \frac{\epsilon}{2\delta} \lap \varphi_0
	+ \frac{\gamma^2}{\delta} v(z) \cdot \grad_x \paren[\Big]{ \chi \cdot \grad_x \av{\varphi_0}}
	+ o(1).
  \end{align}
  We will deduce~\eqref{eqnThetaY} and~\eqref{eqnThetaX} from this by multiplying by an appropriate test function and integrating. 

  To obtain~\eqref{eqnThetaY}, let $\phi = \phi(z)$ be a test function that is compactly supported in $\R^2 - \mathcal L$, and only depends on $z$ through the projection $\Gammaeps(z)$ (i.e., $v(z) \cdot \grad_z \phi = 0$).
  Multiplying~\eqref{eqnVarphi0evol} by~$\phi$ and integrating gives
  \begin{equation}\label{eqnVChi2}
    \int_{z \in \torus} \paren[\Big]{
      \frac{\partial_t \varphi_0}{\log\paren{\delta/\epsilon}} - \frac{\epsilon}{2\delta} \lap \varphi_0 } \phi \, dz
      = \frac{\gamma^2}{\delta} \int_{z \in \torus}
	  v(z) \cdot \grad_x \paren[\Big]{ \chi \cdot \grad_x \av{\varphi_0}}
	  \, \phi(x, z) \, dz.
  \end{equation}
  Note that away from the separatrix the corrector $\chi$ has oscillations of order $\epsilon$ on connected components of level sets of~$H$ (see for instance~\cite{Childress79,NovikovPapanicolaouEtAl05}).
  Hence
  \begin{equation*}
    v(z) \cdot \grad_z \chi = \CO(\epsilon),
    \qquad\text{and}\qquad
    v(z) \cdot \grad_z \phi = 0.
  \end{equation*}
  Consequently (following~\eqref{eqnVint}) the integral on the right of~\eqref{eqnVChi2} vanishes as $\epsilon \to 0$.
  Restricting our attention to $U_i$, writing~\eqref{eqnVChi2} in terms of $y$ and using the co-area formula gives
  \begin{align*}
    0 &= \int_{U_i} \paren[\Big]{
      \frac{\partial_t \varphi_0}{\log\paren{\delta/\epsilon}} - \frac{\epsilon}{2\delta} \lap \varphi_0 } \phi \, dz + o(1)
      = \int_{U_i} \paren[\Big]{
	\frac{\partial_t \varphi_0}{\log\paren{\delta/\epsilon}} - \frac{\epsilon}{2\delta} \lap_z \varphi_0 } \phi \, dz
	+ o(1)
      \\
      &= \int_{y_i = 0}^{\paren{\frac{\delta}{\epsilon}}^{1/2}\max\limits_{U_i} \abs{H}}
	\int_{U_i \cap \set{\abs{H} = \paren{\frac{\epsilon}{\delta}}^{1/2}y_i}}
	  \begin{multlined}[t]
	    \Bigl(
	      \frac{\partial_t \varphi_0}{\log\paren{\delta/\epsilon}
		\abs{\grad H}}
	      - \frac{\abs{\grad H}}{2} \partial_{y_i}^2 \varphi_0
	      \\
	      \qquad- \paren[\Big]{\frac{\epsilon}{\delta}}^{1/2}
		\frac{\lap H}{2\abs{\grad H}}
		\partial_{y_i} \varphi_0
	    \Bigr)
	    \phi \, dl \, d y_i
	    + o(1)
	  \end{multlined}
      \\
      &\xrightarrow{\epsilon \to 0}
	\int_{y \in I_i} \paren[\Big]{
	  \frac{q_i}{a_i} \partial_t \theta
	  - \frac{q_i}{2} D_i^2 \theta
	} \phi(y) \, dy.
  \end{align*}
  This is exactly the weak form of~\eqref{eqnThetaY}.

  For the gluing condition~\eqref{eqnThetaX} define the boundary layer $\Vep$ to be a small neighbourhood of~$\mathcal L$ where the effects of the diffusion and convection balance.
  Explicitly, set
  \begin{equation*}
    \Vep \defeq \set[\Big]{ z \in \torus : \abs{H(z)} <
	\sqrt{N \epsilon},
      },
  \end{equation*}
  where $N = N(\epsilon) \to \infty$ arbitrarily slowly as $\epsilon \to 0$.
  Note that the volume of the boundary layer satisfies
  $\abs{\Vep} = \CO( (N \epsilon)^{1/2} \abs{\log \epsilon} )$ asymptotically as $\epsilon \to 0$.
  Multiplying~\eqref{eqnVarphi0evol} by $(\delta/N \epsilon)^{1/2}$ and integrating over~$\Vep$ gives
  \begin{multline*}
    o(1) = \paren[\Big]{\frac{\delta}{N \epsilon}}^{1/2}
      \int_{\Vep} \paren[\Big]{
      \frac{\partial_t \varphi_0}{\log\paren{\delta/\epsilon}}
	- \frac{\epsilon}{2\delta} \lap \varphi_0
	- \frac{\gamma^2}{\delta} v(z) \cdot \grad_x \paren[\Big]{ \chi \cdot \grad_x \av{\varphi_0}}
    }
    \, dz
    \\
      = -\frac{1}{2}\paren[\Big]{\frac{\epsilon}{N \delta}}^{1/2}
	  \int_{\partial \Vep} \grad_z \varphi_0 \cdot \hat n \, dl
	- \frac{\gamma^2}{\sqrt{N \epsilon \delta}} \sum_{i,j = 1}^2
	    \paren[\Bigg]{\int_{\Vep} v_i \chi_j  \, dz}
	    \partial_{x_i} \partial_{x_j} \av{\varphi_0}
	+o(1).
  \end{multline*}
  This forces
  \begin{equation}\label{eqnGluing}
    \paren[\Big]{\frac{\epsilon}{\delta}}^{1/2}
      \int_{\partial \Vep} \grad_z \varphi_0 \cdot \hat n \, dl
    + \frac{\gamma^2}{ \sqrt{\delta}}
      \sum_{i,j = 1}^2
	\paren[\Bigg]{
	  \frac{2}{\sqrt{\epsilon}} \int_{\Vep} v_i \chi_j  \, dz
	}
	\partial_{x_i} \partial_{x_j} \av{\varphi_0}
    = o(1).
  \end{equation}

  Clearly
  \begin{equation*}
    \paren[\Big]{\frac{\epsilon}{\delta}}^{1/2}
      \int_{\partial \Vep} \grad_z \varphi_0 \cdot \hat n \, dl
    = \paren[\Big]{\frac{\epsilon}{\delta}}^{1/2}
      \sum_{i = 1}^M \int_{\partial \Vep \cap U_i}
	\grad_z \varphi_0 \cdot \hat n \, dl
    \xrightarrow{\epsilon \to 0}
      \sum_{i = 1}^M q_i D_i \theta.
  \end{equation*}
  For the second term in~\eqref{eqnGluing} define the $2 \times 2$ matrix $Q$ by
  \begin{equation}\label{eqnQijDef}
    Q_{i,j}
      = \paren[\Big]{ \sum_{k=1}^M q_k }^{-1}
	\lim_{\epsilon \to 0} \frac{1}{\sqrt{\epsilon}}
	  \int_{\Vep} \paren[\Big]{ v_i \chi_j + v_j \chi_i } \, dz.
  \end{equation}
  If we replace $\chi$ with $\bar \chi$, then the limit on the right is well studied.
  Indeed, using~\eqref{eqnChiBar} we see
  \begin{equation*}
    \frac{1}{\sqrt{\epsilon}}
      \int_{\Vep} \paren[\Big]{ v_i \bar \chi_j + v_j \bar \chi_i } \, dz
    = \sqrt{\epsilon} \int_{\Vep} \grad \bar \chi_i \cdot \grad \bar \chi_j \, dz
      - \frac{\sqrt{\epsilon}}{2} \int_{\partial \Vep}
	  \paren[\Big]{
	    \bar \chi_i \grad \bar \chi_j + \bar \chi_j \grad \bar \chi_i
	  } \cdot \hat n \, dl.
  \end{equation*}
  It is well known  that $\grad \bar \chi$ is $\CO(\epsilon^{-1/2})$ in an $\sqrt{\epsilon}$ neighbourhood of the boundary, and $\CO(1)$ elsewhere.
  Consequently, we expect the right hand side of the above to converge as $\epsilon \to 0$ (see for instance~\cite{FannjiangPapanicolaou94}).

  In our situation, the extra term $\partial_{x_i} \varphi_0 / \av{\partial_{x_i}\varphi_0}$ only depends on the fast variable $z$ through its projection $\Gammaeps(z)$.
  Hence the asymptotic behaviour of $\chi$ is similar to that of $\bar \chi$, and consequently the limit in~\eqref{eqnQijDef} should exists.
  Thus,  equation~\eqref{eqnGluing} yields
  \begin{equation*}
    \sum_{k = 1}^M q_k D^y_k \theta
      + \paren[\Big]{ \sum_{k = 1}^M q_k } [Q : \grad_x^2] \varphi_0
    = 0,
  \end{equation*}
  since $\gamma^2 = \delta$.
  This is exactly~\eqref{eqnThetaX}, as desired.

  \begin{remark}
    Choosing $\delta = 2\epsilon$, the above method also provides an effective equation for $\tilde \theta$ on time scales of order $1/\epsilon$, as considered in~\cite{HairerKoralovPajorGyulai2014}.
    In this regime the analogue of equation~\eqref{eqnThetaY} is now on a finite graph (the unscaled Reeb graph of~$H$), and the generator $\mathcal A$ has non-constant (singular) coefficients.
  \end{remark}

\endappendix
\bibliographystyle{Martin}
\bibliography{citations}

\def\cprime{$'$}
\begin{thebibliography}{RBDH87}
\expandafter\ifx\csname url\endcsname\relax
  \def\url#1{\texttt{#1}}\fi
\expandafter\ifx\csname urlprefix\endcsname\relax\def\urlprefix{URL }\fi
\expandafter\ifx\csname href\endcsname\relax
  \def\href#1#2{#2}\fi
\expandafter\ifx\csname burlalt\endcsname\relax
  \def\burlalt#1#2{\href{#2}{\texttt{#1}}}\fi

\bibitem[ACV16]{Allen2015}
\textsc{M.~Allen}, \textsc{L.~Caffarelli}, and \textsc{A.~Vasseur}.
\newblock A {P}arabolic {P}roblem with a {F}ractional {T}ime {D}erivative.
\newblock \emph{Arch. Ration. Mech. Anal.} \textbf{221}, no.~2, (2016),
  603--630.
\newblock
  \burlalt{doi:10.1007/s00205-016-0969-z}{http://dx.doi.org/10.1007/s00205-016-0969-z}.

\bibitem[AMB11]{BakhtinAlmada2011}
\textsc{S.~A. Almada~Monter} and \textsc{Y.~Bakhtin}.
\newblock Normal forms approach to diffusion near hyperbolic equilibria.
\newblock \emph{Nonlinearity} \textbf{24}, no.~6, (2011), 1883--1907.
\newblock \burlalt{arXiv:1006.3000v1}{http://arxiv.org/abs/1006.3000v1}.
\newblock
  \burlalt{doi:10.1088/0951-7715/24/6/011}{http://dx.doi.org/10.1088/0951-7715/24/6/011}.

\bibitem[BA{\v{C}}07]{BenArousCerny07}
\textsc{G.~Ben~Arous} and \textsc{J.~{\v{C}}ern{\'y}}.
\newblock Scaling limit for trap models on {$\mathbb Z^d$}.
\newblock \emph{Ann. Probab.} \textbf{35}, no.~6, (2007), 2356--2384.
\newblock \burlalt{arXiv:math/0606719}{http://arxiv.org/abs/math/0606719}.
\newblock
  \burlalt{doi:10.1214/009117907000000024}{http://dx.doi.org/10.1214/009117907000000024}.

\bibitem[Bak11]{Bakhtin2010}
\textsc{Y.~Bakhtin}.
\newblock Noisy heteroclinic networks.
\newblock \emph{Probab. Theory Related Fields} \textbf{150}, no. 1-2, (2011),
  1--42.
\newblock \burlalt{arXiv:0712.3952}{http://arxiv.org/abs/0712.3952}.
\newblock
  \burlalt{doi:10.1007/s00440-010-0264-0}{http://dx.doi.org/10.1007/s00440-010-0264-0}.

\bibitem[Bil99]{billingsley1999convergence}
\textsc{P.~Billingsley}.
\newblock \emph{Convergence of probability measures}.
\newblock Wiley Series in Probability and Statistics: Probability and
  Statistics. John Wiley \& Sons, Inc., New York, second ed., 1999,  x+277.
\newblock
  \burlalt{doi:10.1002/9780470316962}{http://dx.doi.org/10.1002/9780470316962}.

\bibitem[BLP78]{BensoussanLionsEtAl78}
\textsc{A.~Bensoussan}, \textsc{J.-L. Lions}, and \textsc{G.~Papanicolaou}.
\newblock \emph{Asymptotic analysis for periodic structures}, vol.~5 of
  \emph{Studies in Mathematics and its Applications}.
\newblock North-Holland Publishing Co., Amsterdam-New York, 1978,  xxiv+700.

\bibitem[Bou92]{Bouchaud}
\textsc{J.~P. Bouchaud}.
\newblock Weak ergodicity breaking and aging in disordered systems.
\newblock \emph{J. Phys. I France} \textbf{2}, no.~9, (1992), 1705--1713.
\newblock
  \burlalt{doi:10.1051/jp1:1992238}{http://dx.doi.org/10.1051/jp1:1992238}.

\bibitem[Chi79]{Childress79}
\textsc{S.~Childress}.
\newblock Alpha-effect in flux ropes and sheets.
\newblock \emph{Phys.\ Earth Planet Inter.} \textbf{20}, (1979), 172--180.

\bibitem[CS89]{ChildressSoward89}
\textsc{S.~Childress} and \textsc{A.~M. Soward}.
\newblock Scalar transport and alpha-effect for a family of cat's-eye flows.
\newblock \emph{J. Fluid Mech.} \textbf{205}, (1989), 99--133.
\newblock
  \burlalt{doi:10.1017/S0022112089001965}{http://dx.doi.org/10.1017/S0022112089001965}.

\bibitem[Die10]{Diethelm10}
\textsc{K.~Diethelm}.
\newblock \emph{The analysis of fractional differential equations}, vol. 2004
  of \emph{Lecture Notes in Mathematics}.
\newblock Springer-Verlag, Berlin, 2010,  viii+247.
\newblock
  \burlalt{doi:10.1007/978-3-642-14574-2}{http://dx.doi.org/10.1007/978-3-642-14574-2}.

\bibitem[DK08]{DolgopyatKoralov08}
\textsc{D.~Dolgopyat} and \textsc{L.~Koralov}.
\newblock Averaging of {H}amiltonian flows with an ergodic component.
\newblock \emph{Ann. Probab.} \textbf{36}, no.~6, (2008), 1999--2049.
\newblock \burlalt{arXiv:0901.2776}{http://arxiv.org/abs/0901.2776}.
\newblock \burlalt{doi:10.1214/07-AOP372}{http://dx.doi.org/10.1214/07-AOP372}.

\bibitem[DK13]{DolgopyatKoralov12}
\textsc{D.~Dolgopyat} and \textsc{L.~Koralov}.
\newblock Averaging of incompressible flows on two-dimensional surfaces.
\newblock \emph{J. Amer. Math. Soc.} \textbf{26}, no.~2, (2013), 427--449.
\newblock
  \burlalt{doi:10.1090/S0894-0347-2012-00755-3}{http://dx.doi.org/10.1090/S0894-0347-2012-00755-3}.

\bibitem[EK86]{EthierKurtz86}
\textsc{S.~N. Ethier} and \textsc{T.~G. Kurtz}.
\newblock \emph{Markov processes: characterization and convergence}.
\newblock Wiley Series in Probability and Mathematical Statistics: Probability
  and Mathematical Statistics. John Wiley \& Sons, Inc., New York, 1986,
  x+534.
\newblock
  \burlalt{doi:10.1002/9780470316658}{http://dx.doi.org/10.1002/9780470316658}.

\bibitem[Fan02]{Fannjiang02}
\textsc{A.~Fannjiang}.
\newblock Time scales in homogenization of periodic flows with vanishing
  molecular diffusion.
\newblock \emph{J. Differential Equations} \textbf{179}, no.~2, (2002),
  433--455.
\newblock
  \burlalt{doi:10.1006/jdeq.2001.4039}{http://dx.doi.org/10.1006/jdeq.2001.4039}.

\bibitem[FP94]{FannjiangPapanicolaou94}
\textsc{A.~Fannjiang} and \textsc{G.~Papanicolaou}.
\newblock Convection enhanced diffusion for periodic flows.
\newblock \emph{SIAM J. Appl. Math.} \textbf{54}, no.~2, (1994), 333--408.
\newblock
  \burlalt{doi:10.1137/S0036139992236785}{http://dx.doi.org/10.1137/S0036139992236785}.

\bibitem[Fre64]{Freidlin64}
\textsc{M.~I. Fre{\u\i}dlin}.
\newblock The {D}irichlet problem for an equation with periodic coefficients
  depending on a small parameter.
\newblock \emph{Teor. Verojatnost. i Primenen.} \textbf{9}, (1964), 133--139.

\bibitem[FS00]{freidlin2000sheu}
\textsc{M.~Freidlin} and \textsc{S.-J. Sheu}.
\newblock Diffusion processes on graphs: stochastic differential equations,
  large deviation principle.
\newblock \emph{Probability Theory and Related Fields} \textbf{116}, no.~2,
  (2000), 181--220.
\newblock
  \burlalt{doi:10.1007/PL00008726}{http://dx.doi.org/10.1007/PL00008726}.

\bibitem[FW93]{FreidlinWentzell93}
\textsc{M.~I. Freidlin} and \textsc{A.~D. Wentzell}.
\newblock Diffusion processes on graphs and the averaging principle.
\newblock \emph{Ann. Probab.} \textbf{21}, no.~4, (1993), 2215--2245.
\newblock
  \burlalt{doi:10.1214/aop/1176989018}{http://dx.doi.org/10.1214/aop/1176989018}.

\bibitem[FW12]{FreidlinWentzell12}
\textsc{M.~I. Freidlin} and \textsc{A.~D. Wentzell}.
\newblock \emph{Random perturbations of dynamical systems}, vol. 260 of
  \emph{Grundlehren der Mathematischen Wissenschaften [Fundamental Principles
  of Mathematical Sciences]}.
\newblock Springer, Heidelberg, third ed., 2012,  xxviii+458.
\newblock Translated from the 1979 Russian original by Joseph Sz{\"u}cs.
\newblock
  \burlalt{doi:10.1007/978-3-642-25847-3}{http://dx.doi.org/10.1007/978-3-642-25847-3}.

\bibitem[HKP14]{HairerKoralovPajorGyulai2014}
\textsc{M.~{Hairer}}, \textsc{L.~{Koralov}}, and \textsc{Z.~{Pajor-Gyulai}}.
\newblock {From averaging to homogenization in cellular flows - an exact
  description of the transition}.
\newblock \emph{ArXiv e-prints} (2014).
\newblock To appear in Ann.\ Inst.\ H.\ Poincar\'e.
\newblock \burlalt{arXiv:1407.0982}{http://arxiv.org/abs/1407.0982}.

\bibitem[IKNR14]{IyerKomorowskiEtAl14}
\textsc{G.~Iyer}, \textsc{T.~Komorowski}, \textsc{A.~Novikov}, and
  \textsc{L.~Ryzhik}.
\newblock From homogenization to averaging in cellular flows.
\newblock \emph{Ann. Inst. H. Poincar\'e Anal. Non Lin\'eaire} \textbf{31},
  no.~5, (2014), 957--983.
\newblock \burlalt{arXiv:1108.0074}{http://arxiv.org/abs/1108.0074}.
\newblock
  \burlalt{doi:10.1016/j.anihpc.2013.06.003}{http://dx.doi.org/10.1016/j.anihpc.2013.06.003}.

\bibitem[IN16]{IyerNovikov16}
\textsc{G.~Iyer} and \textsc{A.~Novikov}.
\newblock Anomalous diffusion in fast cellular flows at intermediate time
  scales.
\newblock \emph{Probab. Theory Related Fields} \textbf{164}, no. 3-4, (2016),
  707--740.
\newblock \burlalt{arXiv:1406.3881}{http://arxiv.org/abs/1406.3881}.
\newblock
  \burlalt{doi:10.1007/s00440-015-0617-9}{http://dx.doi.org/10.1007/s00440-015-0617-9}.

\bibitem[Kif81]{Kifer81}
\textsc{Y.~Kifer}.
\newblock The exit problem for small random perturbations of dynamical systems
  with a hyperbolic fixed point.
\newblock \emph{Israel J. Math} \textbf{40}, no.~1, (1981), 74--96.

\bibitem[Kor04]{Koralov04}
\textsc{L.~Koralov}.
\newblock Random perturbations of 2-dimensional {H}amiltonian flows.
\newblock \emph{Probab. Theory Related Fields} \textbf{129}, no.~1, (2004),
  37--62.
\newblock
  \burlalt{doi:10.1007/s00440-003-0320-0}{http://dx.doi.org/10.1007/s00440-003-0320-0}.

\bibitem[KS07]{koralovsinai2007}
\textsc{L.~Koralov} and \textsc{Y.~Sinai}.
\newblock \emph{Theory of Probability and Random Processes}.
\newblock Universitext. Springer Berlin Heidelberg, 2007.

\bibitem[Lej06]{Lejay06}
\textsc{A.~Lejay}.
\newblock On the constructions of the skew {B}rownian motion.
\newblock \emph{Probab. Surv.} \textbf{3}, (2006), 413--466.
\newblock
  \burlalt{doi:10.1214/154957807000000013}{http://dx.doi.org/10.1214/154957807000000013}.

\bibitem[Man68]{Mandl68}
\textsc{P.~Mandl}.
\newblock \emph{Analytical treatment of one-dimensional {M}arkov processes}.
\newblock Die Grundlehren der mathematischen Wissenschaften, Band 151. Academia
  Publishing House of the Czechoslovak Academy of Sciences, Prague;
  Springer-Verlag New York Inc., New York, 1968,  xx+192.

\bibitem[MS04]{MeerSheff04}
\textsc{M.~M. Meerschaert} and \textsc{H.-P. Scheffler}.
\newblock Limit theorems for continuous-time random walks with infinite mean
  waiting times.
\newblock \emph{J. Appl. Probab.} \textbf{41}, no.~3, (2004), 623--638.
\newblock
  \burlalt{doi:10.1239/jap/1091543414}{http://dx.doi.org/10.1239/jap/1091543414}.

\bibitem[MS08]{MeerScheff08}
\textsc{M.~M. Meerschaert} and \textsc{H.-P. Scheffler}.
\newblock Triangular array limits for continuous time random walks.
\newblock \emph{Stochastic Process. Appl.} \textbf{118}, no.~9, (2008),
  1606--1633.
\newblock
  \burlalt{doi:10.1016/j.spa.2007.10.005}{http://dx.doi.org/10.1016/j.spa.2007.10.005}.

\bibitem[MS12]{MeerschaertSikorskii12}
\textsc{M.~M. Meerschaert} and \textsc{A.~Sikorskii}.
\newblock \emph{Stochastic models for fractional calculus}, vol.~43 of \emph{de
  Gruyter Studies in Mathematics}.
\newblock Walter de Gruyter \& Co., Berlin, 2012,  x+291.

\bibitem[Ngu89]{Nguetseng89}
\textsc{G.~Nguetseng}.
\newblock A general convergence result for a functional related to the theory
  of homogenization.
\newblock \emph{SIAM J. Math. Anal.} \textbf{20}, no.~3, (1989), 608--623.
\newblock \burlalt{doi:10.1137/0520043}{http://dx.doi.org/10.1137/0520043}.

\bibitem[NPR05]{NovikovPapanicolaouEtAl05}
\textsc{A.~Novikov}, \textsc{G.~Papanicolaou}, and \textsc{L.~Ryzhik}.
\newblock Boundary layers for cellular flows at high {P}\'eclet numbers.
\newblock \emph{Comm. Pure Appl. Math.} \textbf{58}, no.~7, (2005), 867--922.
\newblock \burlalt{doi:10.1002/cpa.20058}{http://dx.doi.org/10.1002/cpa.20058}.

\bibitem[Oll94]{Olla94}
\textsc{S.~Olla}.
\newblock \emph{Lectures on Homogenization of Diffusion Processes in Random
  Fields}.
\newblock Publications de l'Ecole Doctorale de l'Ecole Polytechnique. 1994.

\bibitem[PGS15]{PajorGyulaiSalins15}
\textsc{Z.~Pajor-Gyulai} and \textsc{M.~Salins}.
\newblock On dynamical systems perturbed by a null-recurrent motion: {T}he
  general case.
\newblock \emph{ArXiv e-prints} (2015).
\newblock \burlalt{arXiv:1508.05346}{http://arxiv.org/abs/1508.05346}.

\bibitem[PS08]{PavliotisStuart08}
\textsc{G.~A. Pavliotis} and \textsc{A.~M. Stuart}.
\newblock \emph{Multiscale methods}, vol.~53 of \emph{Texts in Applied
  Mathematics}.
\newblock Springer, New York, 2008,  xviii+307.
\newblock
  \burlalt{doi:10.1007/978-0-387-73829-1}{http://dx.doi.org/10.1007/978-0-387-73829-1}.

\bibitem[RBDH87]{RosenbluthBerkEtAl87}
\textsc{M.~N. Rosenbluth}, \textsc{H.~L. Berk}, \textsc{I.~Doxas}, and
  \textsc{W.~Horton}.
\newblock Effective diffusion in laminar convective flows.
\newblock \emph{Phys.\ Fluids} \textbf{30}, (1987), 2636--2647.

\bibitem[Ree46]{Reeb46}
\textsc{G.~Reeb}.
\newblock Sur les points singuliers d'une forme de {P}faff compl\`etement
  int\'egrable ou d'une fonction num\'erique.
\newblock \emph{C. R. Acad. Sci. Paris} \textbf{222}, (1946), 847--849.

\bibitem[RW00a]{RogersWilliams00}
\textsc{L.~C.~G. Rogers} and \textsc{D.~Williams}.
\newblock \emph{Diffusions, {M}arkov processes, and martingales. {V}ol. 1}.
\newblock Cambridge Mathematical Library. Cambridge University Press,
  Cambridge, 2000,  xx+386.
\newblock Foundations, Reprint of the second (1994) edition.
\newblock
  \burlalt{doi:10.1017/CBO9781107590120}{http://dx.doi.org/10.1017/CBO9781107590120}.

\bibitem[RW00b]{RogersWilliams00a}
\textsc{L.~C.~G. Rogers} and \textsc{D.~Williams}.
\newblock \emph{Diffusions, {M}arkov processes, and martingales. {V}ol. 2}.
\newblock Cambridge Mathematical Library. Cambridge University Press,
  Cambridge, 2000,  xiv+480.
\newblock It{\^o} calculus, Reprint of the second (1994) edition.
\newblock
  \burlalt{doi:10.1017/CBO9781107590120}{http://dx.doi.org/10.1017/CBO9781107590120}.

\bibitem[RY99]{revuz1999continuous}
\textsc{D.~Revuz} and \textsc{M.~Yor}.
\newblock \emph{Continuous Martingales and Brownian Motion}.
\newblock Grundlehren der mathematischen Wissenchaften A series of
  comprehensive studies in mathematics. Springer, 1999.

\bibitem[YJ91]{YoungJones91}
\textsc{W.~R. Young} and \textsc{S.~Jones}.
\newblock Shear dispersion.
\newblock \emph{Physics of Fluids A: Fluid Dynamics (1989-1993)} \textbf{3},
  no.~5, (1991), 1087--1101.

\bibitem[You88]{Young88}
\textsc{W.~R. Young}.
\newblock Arrested shear dispersion and other models of anomalous diffusion.
\newblock \emph{J. Fluid Mech.} \textbf{193}, (1988), 129--149.
\newblock
  \burlalt{doi:10.1017/S0022112088002083}{http://dx.doi.org/10.1017/S0022112088002083}.

\bibitem[YPP89]{YoungPumirEtAl89}
\textsc{W.~Young}, \textsc{A.~Pumir}, and \textsc{Y.~Pomeau}.
\newblock Anomalous diffusion of tracer in convection rolls.
\newblock \emph{Phys. Fluids A} \textbf{1}, no.~3, (1989), 462--469.
\newblock \burlalt{doi:10.1063/1.857415}{http://dx.doi.org/10.1063/1.857415}.

\end{thebibliography}

\end{document}